\newtheorem{theorem}{Theorem}[section]
\newtheorem{corollary}[theorem]{Corollary}
\newtheorem{proposition}[theorem]{Proposition}
\newtheorem{lemma}[theorem]{Lemma}
\newtheorem{conjecture}[theorem]{Conjecture}
\theoremstyle{definition}
\theoremstyle{remark}
\newtheorem{remark}[theorem]{Remark}
\newtheorem{remarks}[theorem]{Remarks}
\newcommand{\1}[1]{{\mathbf 1}{\{#1\}}}
\newcommand{\2}[1]{{\mathbf 1}{(#1)}}
\newcommand{\R}{{\mathbb R}}
\newcommand{\Z}{{\mathbb Z}}
\newcommand{\N}{{\mathbb N}}
\newcommand{\RP}{{\mathbb R}_+}
\newcommand{\Sp}{{\mathbb S}}
\DeclareMathOperator{\Exp}{\mathbb{E}}
\renewcommand{\Pr}{{\mathbb P}}
\DeclareMathOperator{\Var}{\mathbb{V}ar}
\DeclareMathOperator*{\argmin}{arg \mkern 1mu \min}
\DeclareMathOperator*{\argmax}{arg \mkern 1mu \max}
\DeclareMathOperator*{\diam}{diam}
\DeclareMathOperator{\trace}{tr}
\newcommand{\hull}{\mathop \mathrm{hull}}
\newcommand{\sd}{\mathop\triangle}
\newcommand{\per}{{\mkern -1mu \scalebox{0.5}{$\perp$}}}
\newcommand{\sperp}{\sigma^2_{\mu_\per}}
\newcommand{\spara}{\sigma^2_{\mu}}
\newcommand{\tra}{{\scalebox{0.6}{$\top$}}}
\newcommand{\eps}{\varepsilon}
\newcommand{\rc}{{\mathrm{c}}}
\newcommand{\ud}{{\mathrm d}}
\newcommand{\cA}{{\mathcal A}}
\newcommand{\cB}{{\mathcal B}}
\newcommand{\cD}{{\mathcal D}}
\newcommand{\cF}{{\mathcal F}}
\newcommand{\cH}{{\mathcal H}}
\newcommand{\cK}{{\mathcal K}}
\newcommand{\cL}{{\mathcal L}}
\newcommand{\cN}{{\mathcal N}}
\newcommand{\cR}{{\mathcal R}}
\newcommand{\cT}{{\mathcal T}}
\newcommand{\as}{\ \text{a.s.}}
\newcommand{\io}{\ \text{i.o.}}
\newcommand{\tod}{\overset{\textup{d}}{\longrightarrow}}
\newcommand{\bx}{{\mathbf{x}}}
\newcommand{\by}{{\mathbf{y}}}
\newcommand{\be}{{\mathbf{e}}}
\newcommand{\0}{{\mathbf{0}}}
\newcommand{\ubar}[1]{\underaccent{\bar}{#1}}
\newcommand{\unittheta}{\mathbf{e}_{\theta}}
\newcommand{\unitzero}{\mathbf{e}_{0}}
\newcommand{\unithalfpi}{\mathbf{e}_{\pi/2}}
\def\namedlabel#1#2{\begingroup  
    (#2)%
    \def\@currentlabel{#2}%
    \phantomsection\label{#1}\endgroup
}
\begin{document}

\title{The convex hull of a planar random walk:\\
 perimeter, diameter, and shape}
\author{James McRedmond \and Andrew R.\ Wade}

\date{\today}
\maketitle

\begin{abstract}
We study the convex hull of the first $n$ steps of a planar random walk,
and present large-$n$ asymptotic results on its perimeter length $L_n$,   diameter $D_n$,
and   shape. In the case where the walk has a non-zero mean drift, we show that $L_n / D_n \to 2$ a.s., and give
distributional limit theorems and variance asymptotics for $D_n$, and in the zero-drift case we show that the convex hull
is infinitely often arbitrarily well-approximated in shape by any unit-diameter compact convex set containing the origin,
and then $\liminf_{n \to \infty} L_n/D_n =2$ and $\limsup_{n \to \infty} L_n /D_n = \pi$, a.s. Among the tools that
we use is a zero-one law for convex hulls of random walks.
  \end{abstract}

\medskip

\noindent
{\em Key words:}  Random walk; convex hull; perimeter length; diameter; shape; zero-one law.

\medskip

\noindent
{\em AMS Subject Classification:} 60G50 (Primary) 60D05; 60F05; 60F15; 60F20 (Secondary)

\section{Model and main results}

\subsection{Introduction and notation}

The geometry of random walks in Euclidean space
is a topic of persistent interest (see e.g.~\cite{rg}). The \emph{convex hull}
of a random walk is a classical geometrical characteristic of the walk~\cite{sw,ss} that has received renewed attention recently~\cite{kvz1,kvz2,ty,wx1,wx2,vz,x};
see~\cite{mcr} for a recent survey, including motivation in terms of modelling  the home range of roaming animals.
The present paper studies the asymptotic behaviour of the convex hull of
a random walk in $\R^2$, focusing on its \emph{shape}, its \emph{perimeter length}, and its \emph{diameter}.

Let $Z, Z_1, Z_2, \ldots$ be i.i.d.~random variables in $\R^2$,
and let $S_0, S_1, S_2, \ldots$ be the associated random walk, defined by $S_0 := \0$ (the origin in $\R^2$) and 
 $S_n := \sum_{k=1}^n Z_k$ for $n \in\N$.
Let $\cH_n := \hull \{ S_0, S_1, \ldots, S_n \}$, where $\hull A$ denotes the convex hull of $A \subseteq \R^2$.
Write $L_n$ for the perimeter length of $\cH_n$,
and let
\[ D_n := \diam \{ S_0, S_1, \ldots, S_n\} = \diam \cH_n. \]
(Note that $\cH_n$, $L_n$, $D_n$ are all random variables on the appropriate spaces: see the comments
at the start of Section~\ref{sec:zero-one}.)

A striking early result on $L_n$ is the formula of Spitzer and Widom~\cite{sw}:
\begin{equation}
\label{s-w} 
\Exp L_n = 2 \sum_{k=1}^n \frac{1}{k} \Exp \| S_k \| ,
\end{equation}
where,  and subsequently, $\| \, \cdot \, \|$ is the Euclidean norm on $\R^2$.
For $\bx \in \R^2 \setminus \{ \0\}$ we set $\hat \bx := \bx / \| \bx \|$.
If $\Exp \| Z \| < \infty$, we write $\mu := \Exp Z$.
If $\Exp ( \| Z \|^2 ) < \infty$, we write
 $\sigma^2  := \Exp (   \| Z - \mu \|^2 )$, and if $\mu \neq \0$ we write
\[ \spara := \Exp [ ( ( Z - \mu ) \cdot \hat \mu  )^2 ], \text{ and } \sperp := \sigma^2 - \spara. \]

The results in this paper concern the asymptotics of $L_n$, $D_n$, and the shape of $\cH_n$.
The cases where $\mu =\0$ and $\mu \neq \0$ are, as is no surprise, quite different.
Rough information about the shape of $\cH_n$ is given by the ratio $L_n/D_n$;
provided that $\Pr ( Z = \0 ) < 1$, convexity implies that a.s., for all but finitely many $n$,
\begin{equation}
\label{l-d-ineq}
2 \leq {L_n}/{D_n} \leq \pi , 
\end{equation}
the extrema being the line segment and shapes of constant width (such as the disc).

\subsection{Laws of large numbers}

Our first result is the following law of large numbers for $L_n$.

\begin{theorem}
\label{t:ss_lln}
Suppose that $\Exp \| Z  \| < \infty$. Then
\[ \lim_{n \to \infty} n^{-1} L_n = 2 \| \mu \| , \text{ a.s.~and in } L^1 .\]
On the other hand, if $\Exp \| Z  \| = \infty$ then $\limsup_{n \to \infty} n^{-1} L_n = \infty$, a.s.
\end{theorem}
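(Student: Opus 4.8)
\emph{Proof strategy.} The two assertions are essentially independent, and I would treat the finite-mean case first. Throughout I use the strong law of large numbers (SLLN): since $\Exp\|Z\| < \infty$, we have $n^{-1}S_n \to \mu$ a.s.\ and in $L^1$, $n^{-1}\sum_{k=1}^n\|Z_k\| \to \Exp\|Z\|$ a.s., and, for any fixed unit vector $\be \in \R^2$, $n^{-1}\max_{0\le k\le n}(\be\cdot S_k) \to (\be\cdot\mu)\vee 0$ a.s.\ (when $\be\cdot\mu>0$ the running maximum is attained essentially at time $n$, and when $\be\cdot\mu\le 0$ it is $o(n)$). The lower bound is then immediate: since the perimeter of a convex set is at least twice its diameter, $L_n \ge 2D_n \ge 2\|S_n\|$, whence $\liminf_{n\to\infty} n^{-1}L_n \ge 2\|\mu\|$ a.s.\ (cf.\ \eqref{l-d-ineq}).

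\emph{The a.s.\ upper bound} $\limsup_{n\to\infty} n^{-1}L_n \le 2\|\mu\|$ is the crux, and I would obtain it from the Cauchy (support-function) representation of the perimeter. With $\be_\theta := (\cos\theta,\sin\theta)$, the support function of $\cH_n$ in direction $\theta$ is $h_n(\theta) := \max_{0\le k\le n}(\be_\theta\cdot S_k)$, which is non-negative since $S_0 = \0 \in \cH_n$, and Cauchy's formula gives $L_n = \int_0^{2\pi} h_n(\theta)\,\ud\theta$. For each fixed $\theta$, the SLLN applied to the one-dimensional walk with increment mean $\be_\theta\cdot\mu$ gives $n^{-1}h_n(\theta) \to (\be_\theta\cdot\mu)\vee 0$ a.s., and uniformly in $\theta$ one has $0 \le n^{-1}h_n(\theta) \le n^{-1}\sum_{k=1}^n\|Z_k\|$, a quantity that converges a.s.\ and is therefore a.s.\ bounded in $n$. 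A Fubini argument---needed because the SLLN exceptional set depends on $\theta$---yields a single almost-sure event on which $n^{-1}h_n(\theta)$ converges for Lebesgue-a.e.\ $\theta$, and dominated convergence in $\theta$ then gives $n^{-1}L_n \to \int_0^{2\pi}(\be_\theta\cdot\mu)\vee 0\,\ud\theta = 2\|\mu\|$ a.s. (A shortcut avoiding Cauchy's formula: after rotating so $\mu$ points along the first axis, $\cH_n$ lies in the axis-parallel rectangle with sides $[\min_{0\le k\le n}S_k^{(i)}, \max_{0\le k\le n}S_k^{(i)}]$, $i=1,2$, whose perimeter divided by $n$ tends a.s.\ to $2\|\mu\|$ since the transverse oscillation and backward extent of the walk are $o(n)$; then invoke monotonicity of perimeter under inclusion of convex sets.)

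\emph{Convergence in $L^1$} then follows by uniform integrability: the crude bound $L_n \le 2\pi\max_{0\le k\le n}\|S_k\| \le 2\pi\sum_{k=1}^n\|Z_k\|$ shows $n^{-1}L_n$ is dominated by an average of i.i.d.\ integrable random variables, hence is uniformly integrable, so the a.s.\ convergence upgrades to $L^1$. Equivalently, the Spitzer--Widom identity \eqref{s-w} gives $n^{-1}\Exp L_n = 2 n^{-1}\sum_{k=1}^n\Exp\|k^{-1}S_k\| \to 2\|\mu\|$ (by the $L^1$-SLLN and Ces\`{a}ro averaging), and Scheff\'{e}'s lemma then converts a.s.\ convergence to $L^1$.

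\emph{The case $\Exp\|Z\| = \infty$} is comparatively soft. Then $\sum_{n\ge 1}\Pr(\|Z\| > cn) = \infty$ for every $c>0$, so the second Borel--Cantelli lemma (using independence of the $Z_n$) gives $\|Z_n\| > cn$ infinitely often, a.s.; since $\|Z_n\| \le \|S_n\| + \|S_{n-1}\|$, this forces $\limsup_{n\to\infty} n^{-1}\|S_n\| = \infty$ a.s., and then $L_n \ge D_n \ge \|S_n\|$ gives $\limsup_{n\to\infty} n^{-1}L_n = \infty$ a.s. The main obstacle is the a.s.\ upper bound in the finite-mean case; everything else is routine once one has the SLLN, the Spitzer--Widom identity, and Borel--Cantelli.
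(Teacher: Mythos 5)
Your proposal is correct, and the overall architecture agrees with the paper's: Cauchy's formula gives $L_n=\int_0^{2\pi}\max_{0\le k\le n}(S_k\cdot\be_\theta)\,\ud\theta$, the lower bound follows from $L_n\ge 2\|S_n\|$, the $L^1$ upgrade uses $L_n\le 2\pi\sum_{k=1}^n\|Z_k\|$ together with Pratt/Scheff\'e, and the infinite-mean case is disposed of by the converse Borel--Cantelli via $\|Z_n\|\le\|S_n\|+\|S_{n-1}\|$. The one place you take a different technical route is the a.s.\ upper bound, which you rightly single out as the crux.

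The paper handles that step by hand: it chooses a random finite time $N_\eps$ after which $\|S_n-n\mu\|<n\eps$, splits $\max_{0\le k\le n}(S_k\cdot\be_\theta)$ into the contribution from $k\le N_\eps$ (uniformly bounded in $\theta$) and the tail (bounded by $n(\mu\cdot\be_\theta+\eps)^+$), integrates in $\theta$, and controls the spillover set $A_\eps\setminus A_0$ as $\eps\to 0$. This is entirely elementary and gives a single uniform bound valid for every $\theta$ simultaneously. Your route instead establishes, for each fixed $\theta$, the one-dimensional SLLN statement $n^{-1}\max_{0\le k\le n}(S_k\cdot\be_\theta)\to(\mu\cdot\be_\theta)^+$ a.s., then uses Fubini to exchange the ``a.s.'' and ``a.e.\ $\theta$'' quantifiers, and finishes with dominated convergence in $\theta$ using $\sup_n n^{-1}\sum_{k\le n}\|Z_k\|<\infty$ a.s.\ as the dominating constant. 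This is slightly slicker and avoids the explicit $\eps$-bookkeeping, at the mild cost of invoking joint measurability for Fubini and DCT; both proofs land on $\int_0^{2\pi}(\mu\cdot\be_\theta)^+\,\ud\theta=2\|\mu\|$. Your parenthetical rectangle shortcut is also sound (the perimeter is monotone under inclusion of convex sets, the rectangle's longitudinal extent is $\|\mu\|n+o(n)$ and its transverse extent is $o(n)$), and arguably the most elementary of the three; it sidesteps Cauchy's formula for the upper bound entirely, though Cauchy's formula is still the cleanest source of the matching lower bound and of the $L^1$ domination.

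One small caveat to tighten if you write this up: for the pointwise-in-$\theta$ convergence you assert the three drift cases $\mu\cdot\be_\theta>0$, $=0$, $<0$ in passing. The first and last are immediate; the borderline case $\mu\cdot\be_\theta=0$ needs the one-line observation that $k^{-1}(S_k\cdot\be_\theta)\to 0$ a.s.\ forces $\max_{0\le k\le n}(S_k\cdot\be_\theta)\le\eps n+C(\omega)$ eventually, hence $o(n)$. That set of $\theta$ is Lebesgue-null when $\mu\neq\0$, but when $\mu=\0$ it is all of $[0,2\pi]$, so the argument should not silently discard it.
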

\begin{remark}
Snyder and Steele~\cite{ss} obtained the almost-sure convergence result under the stronger condition $\Exp ( \| Z \|^2 ) < \infty$
as a consequence of an upper bound on $\Var L_n$ deduced from Steele's version of the Efron--Stein inequality. (Snyder and Steele state their result for the case $\mu  \neq \0$, but their proof works equally well
when $\mu = \0$.) 
\end{remark}

Similarly, we have a law of large numbers for $D_n$.

\begin{theorem}
\label{t:diam_lln}
Suppose that $\Exp \| Z  \| < \infty$. Then
\[ \lim_{n \to \infty} n^{-1} D_n = \| \mu \| , \text{ a.s.~and in } L^1 .\]
On the other hand, if $\Exp \| Z  \| = \infty$ then $\limsup_{n \to \infty} n^{-1} D_n = \infty$, a.s.
\end{theorem}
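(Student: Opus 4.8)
The idea is to sandwich the diameter between an elementary lower bound and an upper bound controlled by the perimeter length, and then to invoke the strong law of large numbers together with Theorem~\ref{t:ss_lln}. The lower bound is immediate: $\0 = S_0$ and $S_n$ both belong to $\cH_n$, so
\[ D_n \ \ge\ \| S_n - S_0 \| \ =\ \| S_n \| \qquad \text{for every } n . \]
For the upper bound I would use the convexity inequalities recorded in~\eqref{l-d-ineq}: a.s., for all large $n$, $2 D_n \le L_n \le \pi D_n$; in particular $D_n \le L_n/2$ for $n$ large, while trivially $D_n \le L_n$ for every $n$.

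Suppose first that $\Exp \| Z \| < \infty$, so that $\mu$ is defined (if $\Pr(Z = \0) = 1$ then $D_n \equiv 0 = \|\mu\|$ and there is nothing to prove, so assume $\Pr(Z=\0)<1$ and hence that~\eqref{l-d-ineq} applies eventually). By the strong law of large numbers, $n^{-1} S_n \to \mu$ a.s., so $n^{-1} \| S_n \| \to \| \mu \|$ a.s., whence $\liminf_{n\to\infty} n^{-1} D_n \ge \| \mu \|$ a.s. By Theorem~\ref{t:ss_lln}, $n^{-1} L_n \to 2 \| \mu \|$ a.s.\ and in $L^1$; combined with $D_n \le L_n/2$ this gives $\limsup_{n\to\infty} n^{-1} D_n \le \| \mu \|$ a.s. The two bounds together yield $n^{-1} D_n \to \| \mu \|$ a.s. For the $L^1$ statement, note $0 \le n^{-1} D_n \le n^{-1} L_n$ for every $n$ and that $(n^{-1} L_n)_n$ is uniformly integrable since it converges in $L^1$; hence $(n^{-1} D_n)_n$ is uniformly integrable and its a.s.\ limit is also an $L^1$ limit. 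This argument makes no use of $\mu \ne \0$, so it covers the zero-drift case (where it gives $n^{-1} D_n \to 0$) as well.

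Now suppose $\Exp \| Z \| = \infty$. Again, if $\Pr(Z = \0) = 1$ there is nothing to prove; otherwise~\eqref{l-d-ineq} applies eventually, so $D_n \ge L_n / \pi$ for all large $n$, and since Theorem~\ref{t:ss_lln} gives $\limsup_{n\to\infty} n^{-1} L_n = \infty$ a.s., we conclude $\limsup_{n\to\infty} n^{-1} D_n = \infty$ a.s.

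I do not expect a genuine obstacle here: the statement is essentially a corollary of Theorem~\ref{t:ss_lln} and the convexity bounds~\eqref{l-d-ineq}, with the lower bound supplied by the trivial observation $D_n \ge \| S_n \|$. The only points that need a word of care are the degenerate case $\Pr(Z = \0) = 1$ and the fact that the perimeter bounds alone do not pin down the leading constant from below — the bound $\liminf_n n^{-1} D_n \ge \|\mu\|$ genuinely needs $\| S_n \|$, not $L_n/\pi$. As an alternative one can avoid~\eqref{l-d-ineq} altogether: writing $R_n := S_n - n\mu$, the identity $S_j - S_i = (j-i)\mu + R_j - R_i$ gives $D_n \le n\|\mu\| + 2\max_{0\le k\le n}\|R_k\|$, and $\max_{0\le k\le n}\|R_k\| = o(n)$ a.s.\ by the strong law applied to the centred walk $(R_n)$; in the case $\Exp\|Z\|=\infty$ one then uses $D_n \ge \|S_n\|$ together with a Borel--Cantelli argument showing $\limsup_{n\to\infty} n^{-1}\|S_n\| = \infty$ a.s.
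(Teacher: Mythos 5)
Your argument is correct and is essentially the paper's own proof: the paper sandwiches $\|S_n\|\le D_n\le L_n/2$, applies the strong law and Theorem~\ref{t:ss_lln} for the a.s.\ limit, invokes Pratt's lemma (which is what your uniform-integrability remark amounts to) for the $L^1$ statement, and uses $D_n\ge L_n/\pi$ for the infinite-mean case. Your extra care about the degenerate case $\Pr(Z=\0)=1$ and the ``eventually'' qualifier in~\eqref{l-d-ineq} is harmless but unnecessary, since $2D_n\le L_n$ and $L_n\le\pi D_n$ hold for every $n$ (both sides vanishing together in degenerate cases), which is why the paper uses them without qualification.
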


In the case $\mu \neq \0$, 
Theorems~\ref{t:ss_lln} and~\ref{t:diam_lln} have the following immediate consequence.

\begin{corollary}
\label{cor:ratio-drift}
Suppose that $\Exp \| Z  \| < \infty$ and that $\mu \neq \0$.
Then 
\[ \lim_{n \to \infty} L_n / D_n = 2, \as \] 
\end{corollary}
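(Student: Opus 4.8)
The plan is to combine the two laws of large numbers just established, since the corollary is an immediate consequence of them. By Theorem~\ref{t:ss_lln}, the hypothesis $\Exp \| Z \| < \infty$ gives $n^{-1} L_n \to 2 \| \mu \|$ almost surely, while by Theorem~\ref{t:diam_lln} the same hypothesis gives $n^{-1} D_n \to \| \mu \|$ almost surely. Because $\mu \neq \0$, the common scaling constant $\| \mu \|$ is strictly positive, which is the only feature of the drift hypothesis we need.

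First I would note that $n^{-1} D_n \to \| \mu \| > 0$ forces $D_n \to \infty$, and hence $D_n > 0$ for all sufficiently large $n$, almost surely; thus the ratio $L_n / D_n$ is eventually well defined. (Alternatively, $\mu \neq \0$ implies $\Pr ( Z = \0 ) < 1$, so~\eqref{l-d-ineq} already tells us that $L_n/D_n \in [2,\pi]$ for all but finitely many $n$, which in particular makes the quotient meaningful.) Then, on the full-probability event where both convergences hold and $D_n$ is eventually positive, I would write
\[ \frac{L_n}{D_n} = \frac{n^{-1} L_n}{n^{-1} D_n} \longrightarrow \frac{2 \| \mu \|}{\| \mu \|} = 2 , \]
using that the limit of a quotient is the quotient of the limits whenever the denominator converges to a non-zero value.

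There is essentially no obstacle: the entire content sits in Theorems~\ref{t:ss_lln} and~\ref{t:diam_lln}, and the sole point requiring a word of care is the harmless bookkeeping observation that the quotient needs $D_n > 0$, which here is automatic from the positivity of the drift.
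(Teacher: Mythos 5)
Your proof is correct and matches the paper's intent exactly: the paper presents this corollary as an immediate consequence of Theorems~\ref{t:ss_lln} and~\ref{t:diam_lln}, obtained by dividing the two almost-sure limits and using $\|\mu\| > 0$, which is precisely what you do. Your brief remark that $D_n$ is eventually positive is a sensible bit of bookkeeping that the paper leaves implicit.
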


\subsection{Zero-drift case}

In the zero-drift case, we need the extra condition $\Exp ( \| Z \|^2 ) < \infty$.
Let $\Sigma := \Exp ( Z Z^\tra )$, viewing $Z$ as a column vector; note that if $\mu = \0$
then $\trace \Sigma = \sigma^2$.  Our first result is the following shape theorem.
Let $\rho_H$ denote the Hausdorff distance between non-empty compact sets;
see~\eqref{eq:rhoH} below for a definition.

\begin{theorem}\label{thm:shape}
Suppose that $\Exp ( \| Z \|^2 ) < \infty$, $\Sigma$ is positive definite, and $\mu   = \0$. Then,
for any compact convex set $K \subset \R^2$ with $\diam K = 1$,
\[ \liminf_{n\to \infty} \rho_H(D_n^{-1}\cH_n,K) =0, \as \]
\end{theorem}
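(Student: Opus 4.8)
The plan is to exploit the fact that, under the zero-drift, finite-variance hypothesis, Donsker's theorem gives a Brownian-scaling picture: after rescaling by $\sqrt n$, the walk looks like a (non-degenerate, since $\Sigma$ is positive definite) planar Brownian motion run for unit time, and the convex hull $\cH_n$ rescaled by $\sqrt n$ converges in distribution to the convex hull of that Brownian path. Since diameter and Hausdorff distance to a fixed target are continuous functionals of the hull, the key object of study becomes the convex hull of planar Brownian motion, for which the analogous ``reachability'' statement must be proved: for any compact convex $K$ with $\diam K=1$ and any $\eps>0$, the event that the convex hull of Brownian motion on $[0,1]$, rescaled to have diameter $1$, lies within $\rho_H$-distance $\eps$ of $K$ has positive probability. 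Granting that, one runs a Borel--Cantelli-type argument along a geometrically growing subsequence $n_k$: the increments $(S_{n_k+1}-S_{n_k},\ldots,S_{n_{k+1}})$ are independent across $k$, the rescaled bridge-like pieces each have a uniformly positive chance of producing a hull close in shape to $K$ (with $D_n$ close to the corresponding scale), and the ``bad'' history $S_0,\ldots,S_{n_k}$ is negligible on the scale $\sqrt{n_{k+1}}$; hence infinitely many successes occur a.s., which is exactly the $\liminf=0$ claim. The announced zero-one law for convex hulls of random walks is presumably what upgrades ``positive probability infinitely often'' to ``probability one''.

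More concretely, I would first prove the Brownian statement. Fix $K$ and $\eps$. By a support-theorem / Cameron--Martin argument, Brownian motion on $[0,1]$ follows any prescribed smooth path with positive probability; choosing a smooth path whose trace has convex hull equal (or $\eps/2$-close in Hausdorff distance, after the obvious normalisation) to $K$ and which starts at $\0$, and then using that the hull functional and the diameter are continuous, one gets that with positive probability the normalised hull is within $\eps$ of $K$. (One must check $K$ can always be so approximated: e.g. take a dense polygonal path tracing out the boundary of $K$, which is possible since $K$ is compact convex and contains — after a translation, since diameter and shape are translation-invariant and the hull of a path through $\0$ always contains $\0$ — the origin; here the hypothesis in the theorem statement that $\0\in K$, implicit in ``containing the origin'' in the abstract, is used.) The positivity of $\Sigma$ is what guarantees the Brownian limit is genuinely two-dimensional so that every such $K$ is attainable.

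Then I would transfer this to the walk. Set $n_k := C^k$ for suitable large $C$. For each $k$, consider the rescaled increment block $W^{(k)} := n_k^{-1/2}\bigl(S_{n_k + \lfloor n_k t \rfloor} - S_{n_k}\bigr)_{0\le t\le C-1}$; by Donsker this converges in distribution to a Brownian motion on $[0,C-1]$, independently over $k$, and the convex hull of the partial-walk block over times $n_k$ to $n_{k+1}$, suitably rescaled, converges to the convex hull of that Brownian piece. By the Brownian statement, the event $A_k$ that this rescaled block-hull is within, say, $\eps/2$ of $K$ (after normalising by its own diameter) and that its diameter is comparable to $\sqrt{n_k}$ has probability bounded below by some $\delta = \delta(C,\eps,K)>0$ for all large $k$; the $A_k$ are independent, so $\sum_k \Pr(A_k)=\infty$ and infinitely many occur a.s. On the event $A_k$, the full hull $\cH_{n_{k+1}}$ differs from the block-hull only through the contribution of $S_0,\ldots,S_{n_k}$ and the drift-free fluctuations within the block, both of which are $o(\sqrt{n_k})$ — here one uses $n^{-1/2}\max_{j\le n}\|S_j\| $ tight plus the block being a factor $C$ longer, so $\|S_{n_k}\| = o(\sqrt{n_{k+1}})$ after choosing $C$ large — hence $D_{n_{k+1}}^{-1}\cH_{n_{k+1}}$ is within $\eps$ of $K$. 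Letting $\eps \downarrow 0$ along a countable sequence and intersecting the full-measure events gives $\liminf_n \rho_H(D_n^{-1}\cH_n, K) = 0$ a.s.

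The main obstacle is the Brownian reachability statement and, relatedly, the quantitative control needed to pass from ``the block looks like $K$'' to ``the whole hull looks like $K$'' — one must verify that the earlier history and the within-block non-extremal fluctuations do not distort the shape, which requires the scale separation from the geometric subsequence and a uniform (in $k$) lower bound on $\delta$; making the Donsker convergence deliver a genuine uniform-in-$k$ probability bound (rather than just a limiting one) is the technical heart. A secondary subtlety is confirming that \emph{every} unit-diameter compact convex $K\ni\0$ is approximable by convex hulls of Brownian paths; this should follow from the support theorem but the geometry (ensuring the diameter of the approximating hull is correct, and that no ``spurious'' extreme points appear) needs care. If one prefers to avoid invoking the weak-convergence of hulls as a black box, an alternative is a direct small-ball / path-following estimate for the walk increments themselves, forcing $\eps n_k^{1/2}$-neighbourhoods of finitely many prescribed lattice-spaced points to be hit in order and no point to stray far from $\hull$-relevant region; this is more elementary but messier. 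Either way, combining the resulting ``positive probability i.o.'' with the zero-one law advertised in the abstract yields the almost-sure conclusion.
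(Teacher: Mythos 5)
Your high-level strategy --- establish positive probability of the rescaled hull being $\eps$-close to $K$, then appeal to a zero-one law --- is exactly the paper's, but your main route to positive probability (the block Borel--Cantelli argument along a geometric subsequence) is both more complicated than what the paper does and contains a gap, while the zero-one law, which you relegate to a fallback, is in fact the sole engine.

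The paper's argument is: Theorem~\ref{thm:zero-one} (a Hewitt--Savage-type zero-one law for the tail $\sigma$-algebra $\cT := \cap_n \sigma(\cH_n,\cH_{n+1},\ldots)$, valid whenever $r_n \to \infty$ a.s.) combined with Proposition~\ref{prop:recurrence} (recurrence implies $r_n \to \infty$) shows that the event $\{\liminf_n \rho_H(D_n^{-1}\cH_n, K) \leq \eps\}$, which lies in $\cT$, has probability $0$ or $1$. To rule out $0$, the paper uses the weak convergence $n^{-1/2}\cH_n \Rightarrow \Sigma^{1/2}h_1$ from~\cite{wx2} together with the continuity of $K' \mapsto K'/\cD(K')$ on $\cK^\star$ (Lemma~\ref{lem:scale-cont}) to get $D_n^{-1}\cH_n \Rightarrow \Sigma^{1/2}h_1/\cD(\Sigma^{1/2}h_1)$ (Lemma~\ref{lem:shape-weak}), and then forces the Brownian path to make a "loop" so that $K \subseteq \Sigma^{1/2}h_1 \subseteq (1+\delta)K$ with positive probability (perturbing $K$ slightly when $\0 \in \partial K$). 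No subsequence, no block decomposition, no independence argument.

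The concrete gap in your block argument: along a geometric subsequence $n_k = C^k$, the claim $\|S_{n_k}\| = o(\sqrt{n_{k+1}})$ (even after choosing $C$ large) is false a.s. By the law of the iterated logarithm, $\max_{j\leq n_k}\|S_j\| / \sqrt{n_{k+1}}$ is a.s.\ of order $\sqrt{(\log\log n_k)/C}$, which diverges for any fixed $C$; tightness of $n^{-1/2}\max_{j\leq n}\|S_j\|$ gives stochastic boundedness but not an almost-sure $o(\cdot)$ bound. One could repair this by taking a super-geometric subsequence (say $n_k = 2^{2^k}$), but then one must also re-examine whether the event $A_k$ you construct is purely a function of the block increments --- once you fold in a requirement that the early history be small, the $A_k$ are no longer independent and the second Borel--Cantelli lemma does not apply directly. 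Since you end up invoking the zero-one law in any case, the block construction buys nothing: a single positive-probability estimate along the full sequence, as in the paper, already suffices.
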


Note that under the hypotheses of Theorem~\ref{thm:shape}, $\Pr ( Z = \0 ) < 1$, so that $D_n > 0$ for all but finitely many $n$, a.s.
A consequence of Theorem~\ref{thm:shape} is the following result, which should be contrasted
with Corollary~\ref{cor:ratio-drift}.

\begin{corollary}\label{cor:infsupLD}
Suppose that $\Exp ( \| Z \|^2 ) < \infty$, $\Sigma$ is positive definite, and $\mu   = \0$. Then,
\[ 2 = \liminf_{n \to \infty} \frac{L_n}{D_n} < \limsup_{n \to \infty} \frac{L_n}{D_n} = \pi, \as \]
\end{corollary}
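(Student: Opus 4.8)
The plan is to combine the shape theorem (Theorem~\ref{thm:shape}) with the almost-sure bound~\eqref{l-d-ineq} and the elementary fact that perimeter length is continuous on planar convex sets under Hausdorff convergence. We work throughout on the almost-sure event $\Omega_0$ on which~\eqref{l-d-ineq} holds for all large $n$, on which $D_n > 0$ for all large $n$ (valid since $\Pr(Z=\0)<1$ under our hypotheses), and on which the conclusion of Theorem~\ref{thm:shape} holds; this event has probability one. Write $L(A)$ for the perimeter length of a compact convex set $A \subset \R^2$, with the usual convention that a segment of length $\ell$ has perimeter $2\ell$. Then $L$ is positively homogeneous of degree one, and since $D_n = \diam \cH_n$, the rescaled hull $D_n^{-1}\cH_n$ is a compact convex set of diameter exactly $1$ with $L(D_n^{-1}\cH_n) = L_n/D_n$.

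The first key step is continuity of $L$ under Hausdorff convergence within the class $\mathcal{K}_1$ of compact convex subsets of $\R^2$ of diameter $1$ (degenerate segments included). By Cauchy's formula, $L(A) = \int_0^\pi w_A(\theta)\,\ud\theta$ for $A \in \mathcal{K}_1$, where $w_A(\theta) \in [0,1]$ is the width of $A$ in direction $\theta$; since $w_A(\theta) = h_A(u_\theta)+h_A(-u_\theta)$ in terms of support functions, one has $|w_A(\theta)-w_B(\theta)| \le 2\rho_H(A,B)$ uniformly in $\theta$, and dominated convergence gives $L(A_k) \to L(A)$ whenever $\rho_H(A_k,A)\to 0$ with $A_k,A \in \mathcal{K}_1$. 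In particular, if $K$ is a unit segment then $L(K)=2$, and if $K$ has constant width $1$ (for instance the disc of diameter $1$) then $L(K)=\pi$ by Barbier's theorem.

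Now apply Theorem~\ref{thm:shape} twice, in each case positioning $K$ to contain the origin. Taking $K$ to be a unit-length segment, there is on $\Omega_0$ a (random) sequence $n_k \to \infty$ with $\rho_H(D_{n_k}^{-1}\cH_{n_k},K)\to 0$, whence $L_{n_k}/D_{n_k} = L(D_{n_k}^{-1}\cH_{n_k}) \to L(K) = 2$ by the previous paragraph, so $\liminf_{n\to\infty} L_n/D_n \le 2$. Taking instead $K$ to be a set of constant width $1$, the same argument produces a subsequence along which $L_n/D_n \to \pi$, so $\limsup_{n\to\infty} L_n/D_n \ge \pi$. Combining these with the complementary bounds $\liminf_{n\to\infty} L_n/D_n \ge 2$ and $\limsup_{n\to\infty} L_n/D_n \le \pi$ furnished by~\eqref{l-d-ineq} gives $\liminf_{n\to\infty} L_n/D_n = 2$ and $\limsup_{n\to\infty} L_n/D_n = \pi$ on $\Omega_0$, and the strict inequality $2<\pi$ is trivial.

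The argument is short because Theorem~\ref{thm:shape} supplies the substantive input; the only point needing care is the continuity of the perimeter functional. One must resist appealing merely to lower semicontinuity of perimeter (which in general does not identify the limiting value, though it happens to be irrelevant here since the matching upper bound comes for free from convexity via~\eqref{l-d-ineq}), and one must treat the degenerate limiting shape in Theorem~\ref{thm:shape} correctly — both handled cleanly by the Cauchy-formula representation together with the convention that the perimeter of a segment counts both sides.
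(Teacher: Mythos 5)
Your argument is correct and follows essentially the same route as the paper: apply Theorem~\ref{thm:shape} with $K$ a unit segment and with $K$ a unit-diameter disc, use (Lipschitz) continuity of perimeter in the Hausdorff metric, and close with the two-sided bound~\eqref{l-d-ineq}. The only cosmetic difference is that you reprove the perimeter-Lipschitz estimate $|\cL(K_1)-\cL(K_2)|\le 2\pi\rho_H(K_1,K_2)$ from Cauchy's formula and the support-function form of $\rho_H$, whereas the paper simply cites it as Lemma~2.4 of~\cite{wx2} (displayed at~\eqref{eq:L-cont}).
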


\subsection{Case with drift}

Now we turn  to the individual asymptotics for $L_n$ and $D_n$ in the case with non-zero drift. 
The  behaviour of $L_n$ was studied in~\cite{wx1}, where it was shown (Theorem~1.3 of~\cite{wx1}) that
if  $\Exp ( \| Z \|^2 ) < \infty$ and $\mu \neq \0$, then, as $n \to \infty$,
\begin{equation}
\label{wx}
 n^{-1/2} | L_n - \Exp L_n - 2 ( S_n - \Exp S_n ) \cdot \hat \mu | \to 0 , \text{ in } L^2 .
\end{equation}
As shown in~\cite{wx1}, this result implies variance asymptotics for $L_n$
as well as a central limit theorem when $\spara >0$.
We show that~\eqref{wx} may be recast in the following stronger form.

\begin{theorem}
\label{thm1}
Suppose that $\Exp ( \| Z  \|^2 ) < \infty$ and $\mu \neq \0$. Then, as $n \to \infty$,
\[ n^{-1/2} | L_n - 2 S_n \cdot \hat \mu | \to 0, \text{ in } L^2. \]
\end{theorem}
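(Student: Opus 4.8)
The plan is to derive the theorem from the Wade--Xu estimate~\eqref{wx} by noting that the difference between the two centrings is purely \emph{deterministic} and of size $o(\sqrt n)$. Since $\Exp S_n = n\mu$ and $\mu\cdot\hat\mu = \|\mu\|$, we have $2(S_n - \Exp S_n)\cdot\hat\mu = 2 S_n\cdot\hat\mu - 2 n \|\mu\|$, and hence the exact identity
\[ L_n - 2 S_n \cdot \hat\mu = \bigl( L_n - \Exp L_n - 2 (S_n - \Exp S_n)\cdot\hat\mu \bigr) + \bigl( \Exp L_n - 2 n \|\mu\| \bigr) . \]
Applying the triangle inequality in $L^2$ and multiplying by $n^{-1/2}$, the $L^2$-norm of the left side is bounded by the $L^2$-norm of $n^{-1/2}$ times the first bracket, which tends to $0$ by~\eqref{wx}, plus the deterministic term $n^{-1/2} \lvert \Exp L_n - 2 n \|\mu\| \rvert$. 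So the theorem reduces to showing $\Exp L_n - 2 n \|\mu\| = o(\sqrt n)$ (in fact we will get $O(\log n)$).

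For this, I would invoke the Spitzer--Widom formula~\eqref{s-w}, $\Exp L_n = 2 \sum_{k=1}^n k^{-1} \Exp \|S_k\|$, and estimate $\Exp \|S_k\|$ by expanding the Euclidean norm around $k\mu$. The crucial elementary inequality is that for all $\bu \in \R^2 \setminus \{\0\}$ and $\bv \in \R^2$,
\[ 0 \le \|\bu + \bv\| - \|\bu\| - \hat\bu\cdot\bv \le \frac{4\|\bv\|^2}{\|\bu\|} . \]
The lower bound is the supporting-hyperplane inequality $\|\bu + \bv\| \ge (\bu + \bv)\cdot\hat\bu$. For the upper bound, the difference-of-squares identity $\|\bu+\bv\|^2 - (\|\bu\| + \hat\bu\cdot\bv)^2 = \|\bv\|^2 - (\hat\bu\cdot\bv)^2 \ge 0$ gives, when $\|\bv\| < \tfrac12\|\bu\|$ (so the denominator below exceeds $\|\bu\|$),
\[ \|\bu+\bv\| - \|\bu\| - \hat\bu\cdot\bv = \frac{\|\bv\|^2 - (\hat\bu\cdot\bv)^2}{\|\bu+\bv\| + \|\bu\| + \hat\bu\cdot\bv} \le \frac{\|\bv\|^2}{\|\bu\|}, \]
while if $\|\bv\| \ge \tfrac12\|\bu\|$ the crude bound $\|\bu+\bv\| - \|\bu\| - \hat\bu\cdot\bv \le 2\|\bv\| \le 4\|\bv\|^2/\|\bu\|$ closes the gap. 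Taking $\bu = k\mu$, $\bv = S_k - k\mu$ and using $\Exp[S_k - k\mu] = \0$ together with $\Exp\|S_k - k\mu\|^2 = k\sigma^2$ yields $0 \le \Exp\|S_k\| - k\|\mu\| \le 4\sigma^2/\|\mu\|$, a bound uniform in $k$. Summing with the Spitzer--Widom weights gives $\Exp L_n - 2 n\|\mu\| = 2 \sum_{k=1}^n k^{-1}(\Exp\|S_k\| - k\|\mu\|) = O(\log n)$, and the reduction above then finishes the proof.

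Essentially all the substance lies in the estimate $\Exp\|S_k\| = k\|\mu\| + O(1)$; the rest is bookkeeping. I expect the only (minor) obstacle to be the uniform second-order bound on the norm, specifically handling the event that $S_k - k\mu$ is of order $k$, where a naive quadratic Taylor expansion of $\|\cdot\|$ around $k\mu$ degenerates; the two-regime split above circumvents this, since on that event the linear bound $2\|\bv\|$ is itself dominated by $4\|\bv\|^2/\|\bu\|$. It is worth noting that a cruder bound such as $\Exp\|S_k\| \le k\|\mu\| + \sigma\sqrt{k}$ only gives $\Exp L_n - 2n\|\mu\| = O(\sqrt n)$, which is not sharp enough, so the $O(1)$ refinement is genuinely needed.
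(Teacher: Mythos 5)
Your proof is correct, and the elementary inequality at its core is verified: the lower bound is Cauchy--Schwarz, the identity $\|\bu+\bv\|^2 - (\|\bu\| + \hat\bu\cdot\bv)^2 = \|\bv\|^2 - (\hat\bu\cdot\bv)^2$ is easily checked, the two-regime split is handled cleanly, and taking $\bu = k\mu$, $\bv = S_k - k\mu$ with $\Exp\|S_k - k\mu\|^2 = k\sigma^2$ gives the uniform bound $0 \le \Exp\|S_k\| - k\|\mu\| \le 4\sigma^2/\|\mu\|$ and hence $\Exp L_n - 2n\|\mu\| = O(\log n)$ via Spitzer--Widom.

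The overall scaffolding matches the paper's: both decompose $L_n - 2S_n\cdot\hat\mu$ into the Wade--Xu fluctuation plus the deterministic centring gap $\Exp L_n - 2n\|\mu\|$, and both estimate the latter by feeding $\Exp\|S_k\| - k\|\mu\|$ into the Spitzer--Widom sum. Where you genuinely diverge is in how that quantity is controlled. The paper obtains the sharp limit $\Exp\|S_n\| - n\|\mu\| \to \sperp/(2\|\mu\|)$ (Lemma~\ref{lem2}), which needs a uniform-integrability argument (Lemma~\ref{lem:ui}) and a separate technical bound on $\Exp X_n^-$ (Lemma~\ref{lem:negative-part}); Theorem~\ref{thm1} is then a corollary of the resulting precise asymptotic for $\Exp L_n$ (Theorem~\ref{thm:drift-mean}). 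Your route replaces that entire chain with a one-line deterministic inequality giving only the crude but uniform bound $\Exp\|S_k\| - k\|\mu\| = O(1)$, which is all Theorem~\ref{thm1} actually requires. This is a real simplification if one wants Theorem~\ref{thm1} in isolation; the paper pays for the extra machinery because it also wants the precise $(\sperp/\|\mu\|)\log n$ term in Theorem~\ref{thm:drift-mean}, which your $O(1)$ bound cannot deliver. It would be worth remarking, if you adopted this proof, that your constant $4\sigma^2/\|\mu\|$ is an overestimate: the true constant is $\sperp/(2\|\mu\|)$, and $\sigma^2 \geq \sperp$.
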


The following result is the key additional component in the proof of Theorem~\ref{thm1},
and is of interest in its own right; its proof uses the Spitzer--Widom formula~\eqref{s-w}.

\begin{theorem}
\label{thm:drift-mean}
Suppose that $\Exp ( \| Z  \|^2 ) < \infty$ and $\mu \neq \0$.
Then, as $n \to \infty$,
\[  \Exp   L_n =  2  \| \mu \| n + \left(\frac{\sperp}{  \| \mu \|} + o(1) \right) \log n . \]
\end{theorem}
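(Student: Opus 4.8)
The plan is to start from the Spitzer--Widom formula~\eqref{s-w}, which reduces the statement to the asymptotics of $\sum_{k=1}^n k^{-1}\Exp\|S_k\|$. Since $\Exp\|Z\| < \infty$ we have $\Exp\|S_k\| \le k\Exp\|Z\| < \infty$ and $\Exp[S_k\cdot\hat\mu] = k\mu\cdot\hat\mu = k\|\mu\|$, so on setting
\[ c_k := \Exp\|S_k\| - k\|\mu\| = \Exp\bigl[\|S_k\| - S_k\cdot\hat\mu\bigr] \ge 0 \]
(the last quantity is nonnegative since $\|S_k\|\ge S_k\cdot\hat\mu$), formula~\eqref{s-w} gives $\Exp L_n = 2\|\mu\| n + 2\sum_{k=1}^n k^{-1} c_k$. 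As $\sum_{k=1}^n k^{-1} = \log n + O(1)$, a standard summation (Toeplitz) argument shows that $c_k \to c$ implies $\sum_{k=1}^n k^{-1}c_k = c\log n + o(\log n)$; hence it suffices to prove that $c_k \to \sperp/(2\|\mu\|)$ as $k\to\infty$.

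To estimate $c_k$, write $\eta_k^2 := \|S_k\|^2 - (S_k\cdot\hat\mu)^2 \ge 0$ for the squared distance from $S_k$ to the line $\R\hat\mu$; then $\eta_k \le \|S_k - k\mu\|$ (as $k\mu$ lies on this line) and $\Exp[\eta_k^2] = \Exp\|S_k\|^2 - \Exp[(S_k\cdot\hat\mu)^2] = k\sigma^2 - k\spara = k\sperp$. Whenever $\|S_k\| + S_k\cdot\hat\mu > 0$ one has the identity $\|S_k\| - S_k\cdot\hat\mu = \eta_k^2/(\|S_k\| + S_k\cdot\hat\mu)$. Fix $\eps\in(0,\|\mu\|)$ and decompose according to $G_k := \{\|S_k - k\mu\| \le \eps k\}$. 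On $G_k$ we have $\bigl|\|S_k\| - k\|\mu\|\bigr| \le \eps k$ and $\bigl|S_k\cdot\hat\mu - k\|\mu\|\bigr| \le \eps k$, so $\|S_k\| + S_k\cdot\hat\mu \in [2k(\|\mu\|-\eps),\,2k(\|\mu\|+\eps)]$, whence
\[ \frac{\Exp[\eta_k^2\1{G_k}]}{2k(\|\mu\|+\eps)} \le \Exp\bigl[(\|S_k\|-S_k\cdot\hat\mu)\1{G_k}\bigr] \le \frac{\Exp[\eta_k^2\1{G_k}]}{2k(\|\mu\|-\eps)} . \]
Here $\Exp[\eta_k^2\1{G_k}] = k\sperp - \Exp[\eta_k^2\1{G_k^c}]$, and $\Exp[\eta_k^2\1{G_k^c}] \le \Exp[\|S_k-k\mu\|^2\1{\|S_k-k\mu\|>\eps k}] = o(k)$: by the central limit theorem $k^{-1/2}(S_k-k\mu)$ converges in law to a centred Gaussian whose second moment is $\sigma^2 = \lim_k \Exp[k^{-1}\|S_k-k\mu\|^2]$, which forces the nonnegative family $\{k^{-1}\|S_k-k\mu\|^2\}_{k\ge1}$ to be uniformly integrable. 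Thus $\Exp[\eta_k^2\1{G_k}] = k\sperp(1+o(1))$, and the displayed bounds give $\liminf_k \Exp[(\|S_k\|-S_k\cdot\hat\mu)\1{G_k}] \ge \sperp/(2(\|\mu\|+\eps))$ and $\limsup_k \le \sperp/(2(\|\mu\|-\eps))$.

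The remaining, and most delicate, point is that the complementary event contributes negligibly, even though $\|S_k\|-S_k\cdot\hat\mu$ can be of order $k$ there. The key observation is that on $G_k^c$ the very scale $k$ is dominated by the deviation: $\|S_k-k\mu\| > \eps k$ gives $k\|\mu\| < (\|\mu\|/\eps)\|S_k-k\mu\|$, so $0 \le \|S_k\|-S_k\cdot\hat\mu \le 2\|S_k\| \le 2k\|\mu\| + 2\|S_k-k\mu\| \le C_\eps\|S_k-k\mu\|$, and therefore
\[ \Exp\bigl[(\|S_k\|-S_k\cdot\hat\mu)\1{G_k^c}\bigr] \le C_\eps\,\Exp\bigl[\|S_k-k\mu\|\1{\|S_k-k\mu\|>\eps k}\bigr] \le \frac{C_\eps}{\eps k}\,\Exp\bigl[\|S_k-k\mu\|^2\1{\|S_k-k\mu\|>\eps k}\bigr] = o(1), \]
by the same uniform integrability; in particular no separate tail bound on $\Pr(G_k^c)$ is required. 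Combining the two parts, $\sperp/(2(\|\mu\|+\eps)) \le \liminf_k c_k \le \limsup_k c_k \le \sperp/(2(\|\mu\|-\eps))$, and letting $\eps\downarrow0$ yields $c_k \to \sperp/(2\|\mu\|)$. Substituting back into $\Exp L_n = 2\|\mu\| n + 2\sum_{k=1}^n k^{-1}c_k$ gives $\Exp L_n = 2\|\mu\| n + (\sperp/\|\mu\| + o(1))\log n$. The main obstacle is thus to make the expansion of $\Exp\|S_k\|$ rigorous under only a second-moment hypothesis; the resolution combines the soft uniform integrability of $k^{-1}\|S_k-k\mu\|^2$ coming from the CLT with the device of trading, on the exceptional event, a factor $k$ for a factor $\|S_k-k\mu\|$.
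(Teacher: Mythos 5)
Your proposal is correct, and it reaches the same essential estimate (that $\Exp\|S_k\| = k\|\mu\| + \sperp/(2\|\mu\|) + o(1)$, which is then fed into the Spitzer--Widom formula) by a genuinely different route from the paper. The paper deduces this from the stronger $L^1$-convergence $\|S_n\| - S_n\cdot\hat\mu \to \sperp\zeta^2/(2\|\mu\|)$ (its Lemma~\ref{lem2}), which it proves by combining (a) a uniform-integrability lemma (Lemma~\ref{lem:ui}) for $\|S_n\| - |S_n\cdot\hat\mu|$, built on the identity $\|S_n\| - |X_n| = Y_n^2/(\|S_n\| + |X_n|)$ and a split on $\{\|S_n\| \le \delta n\}$ with a Chebyshev bound on the small event, with (b) a separate technical lemma (Lemma~\ref{lem:negative-part}) to show $\Exp[X_n^-] \to 0$, needed because of the detour through $|X_n|$. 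You instead work directly with $X_k = S_k\cdot\hat\mu$ (not its absolute value), so the $X_n^-$ correction never arises, and you estimate the expectation directly by decomposing on $G_k = \{\|S_k - k\mu\| \le \eps k\}$: on $G_k$ the same algebraic identity gives a two-sided sandwich, while on $G_k^c$ the neat trick of trading a factor $k$ for a factor $\|S_k - k\mu\|$, together with uniform integrability of $k^{-1}\|S_k - k\mu\|^2$ (from the CLT plus convergence of second moments, the same soft fact the paper invokes), makes the contribution vanish without any explicit tail bound on $\Pr(G_k^c)$. What you lose relative to the paper is the $L^1$-convergence statement itself, which the paper reuses (together with Slutsky) in the proof of Theorem~\ref{thm3}; what you gain is a shorter, more self-contained argument for the expectation that sidesteps Lemma~\ref{lem:negative-part} entirely. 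One small slip in presentation: the chain $\Exp\|S_k\|^2 - \Exp[(S_k\cdot\hat\mu)^2] = k\sigma^2 - k\spara$ drops the $k^2\|\mu\|^2$ terms that cancel between the two expectations; the conclusion $\Exp[\eta_k^2] = k\sperp$ is correct (indeed $\eta_k^2 = Y_k^2$ with $\Exp Y_k^2 = k\sperp$), but the intermediate equation should be stated with the cancellation made explicit.
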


Analogous asymptotic expansions for $\Exp L_n$ in the case $\mu = \0$ have been
presented recently in~\cite{glm}.
For the diameter $D_n$, we have the following analogue of Theorem~\ref{thm1}.

\begin{theorem}
\label{thm2}
Suppose that $\Exp ( \| Z  \|^2 ) < \infty$ and $\mu \neq \0$. Then, as $n \to \infty$, 
\[ n^{-1/2} | D_n -   S_n \cdot \hat \mu | \to 0, \text{ in } L^2. \]
\end{theorem}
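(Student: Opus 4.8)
The plan is to show that $D_n$ and $S_n \cdot \hat\mu$ are both close to the same geometric quantity, namely the extent of $\cH_n$ in the drift direction. Write $\be := \hat\mu$ and let $\be^\perp$ be a unit vector orthogonal to $\be$. For $0 \le k \le n$ decompose $S_k = (S_k \cdot \be)\be + (S_k \cdot \be^\perp)\be^\perp$. Set $M_n := \max_{0 \le k \le n} S_k \cdot \be$ and $m_n := \min_{0 \le k \le n} S_k \cdot \be$, so that the ``longitudinal span'' is $W_n := M_n - m_n$, and let $R_n := \max_{0\le k \le n} |S_k \cdot \be^\perp|$ be a crude bound on the transverse fluctuation. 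The first step is the geometric sandwich
\[
W_n \le D_n \le \sqrt{W_n^2 + (2R_n)^2} \le W_n + 2 R_n ,
\]
the lower bound because the two points achieving $M_n$ and $m_n$ are at distance at least $W_n$ apart, and the upper bound because any two of the points lie in a rectangle of longitudinal side $W_n$ and transverse side at most $2R_n$. Since $\mu\ne\0$, by the strong law $S_n\cdot\be\sim \|\mu\| n\to\infty$, so the running maximum $M_n$ is eventually attained near the end: in fact $M_n - S_n\cdot\be = \max_{0\le k\le n}(S_k - S_n)\cdot\be$ and, time-reversing, this has the law of $\max_{0\le j\le n}(-S_j)\cdot\be = -\min_{0\le j\le n} S_j\cdot\be$, which is $O(1)$ in $L^2$ because $S_j\cdot\be$ is a mean-$(\|\mu\|>0)$ random walk whose all-time minimum has finite second moment (a standard renewal/ladder-height estimate, e.g.\ via $\Exp(\inf_j S_j\cdot\be)^2<\infty$ when $\Exp(Z\cdot\be)>0$ and $\Exp((Z\cdot\be)^2)<\infty$). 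Likewise $m_n = \min_{0\le k\le n} S_k\cdot\be$ is $O(1)$ in $L^2$, being the all-time minimum up to an $o(\cdot)$ truncation. Hence $\Exp|W_n - S_n\cdot\be|^2 = \Exp|(M_n - S_n\cdot\be) - m_n|^2 = O(1)$, which gives $n^{-1/2}|W_n - S_n\cdot\be|\to 0$ in $L^2$.

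The second step handles the transverse term: $R_n = \max_{0\le k\le n}|S_k\cdot\be^\perp|$, where $S_k\cdot\be^\perp$ is a mean-zero random walk (since $\mu\cdot\be^\perp = 0$) with increment variance $\sperp<\infty$. By Doob's $L^2$ maximal inequality applied to the martingale $S_k\cdot\be^\perp$, $\Exp R_n^2 \le 4\Exp(S_n\cdot\be^\perp)^2 = 4\sperp n = O(n)$, so $n^{-1/2}R_n$ is bounded in $L^2$; in particular $n^{-1/2}\cdot 2R_n$ does not go to zero, but that is fine because we only need $R_n = o(W_n)$ in a ratio sense and, more precisely, we exploit the better bound $D_n - W_n \le 2R_n^2/(W_n)$ coming from $\sqrt{W_n^2+(2R_n)^2} - W_n = (2R_n)^2/(\sqrt{W_n^2+(2R_n)^2}+W_n) \le 2R_n^2/W_n$. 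Since $W_n \ge M_n \ge S_n\cdot\be$ and $n^{-1}S_n\cdot\be\to\|\mu\|>0$ a.s., we have $W_n \ge \tfrac12\|\mu\| n$ for all large $n$ a.s.; combined with $\Exp R_n^2 = O(n)$ this yields $\Exp|D_n - W_n|^2 \le \Exp(4 R_n^4/W_n^2)$, and controlling $R_n^4$ via the Burkholder--Davis--Gundy inequality ($\Exp R_n^4 = O(n^2)$) gives $\Exp|D_n-W_n|^2 = O(1)$ on the event $\{W_n\ge\tfrac12\|\mu\|n\}$, whose complement has summable — hence negligible in $L^2$ after a routine truncation — probability by a large-deviation or Chebyshev bound. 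Therefore $n^{-1/2}|D_n - W_n|\to 0$ in $L^2$.

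Combining the two steps by the triangle inequality in $L^2$,
\[
n^{-1/2}\bigl\| D_n - S_n\cdot\hat\mu \bigr\|_{L^2} \le n^{-1/2}\bigl\| D_n - W_n\bigr\|_{L^2} + n^{-1/2}\bigl\| W_n - S_n\cdot\hat\mu\bigr\|_{L^2} \to 0 ,
\]
which is the claim. The main obstacle I expect is the clean handling of the event $\{W_n < \tfrac12\|\mu\|n\}$ when passing from almost-sure lower bounds on $W_n$ to the $L^2$ statement: one must ensure the contribution of $D_n - W_n$ on this bad event is $o(n^{1/2})$ in $L^2$, which requires a moment bound on $D_n$ itself (say $\Exp D_n^2 = O(n^2)$, immediate from $D_n\le 2\max_k\|S_k\|$ and Doob) together with the exponential (or polynomial, via higher moments if only $\Exp\|Z\|^2<\infty$ is assumed — here Chebyshev on $S_n\cdot\be$ gives $\Pr(W_n<\tfrac12\|\mu\|n) = O(1/n)$, enough since $O(1/n)\cdot O(n^2) = O(n) = o(n)\cdot n$... in fact one needs a touch more care, splitting the max over $k$) smallness of $\Pr(W_n<\tfrac12\|\mu\|n)$. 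A cleaner route, avoiding fourth moments, is to note $D_n - W_n \le 2R_n$ deterministically and write $\Exp|D_n - W_n|^2 \le 4\,\Exp[ R_n^2 \wedge (2R_n^2/W_n)^2 ]$ is not needed; instead use $\Exp\bigl[(D_n-W_n)^2\bigr] \le \Exp\bigl[(2R_n)^2\,\1{W_n<\epsilon n}\bigr] + \Exp\bigl[(2R_n^2/(\epsilon n))^2\bigr]$ and send $n\to\infty$ then $\epsilon\downarrow 0$, the first term handled by Cauchy--Schwarz using $\Exp R_n^4 = O(n^2)$ and $\Pr(W_n<\epsilon n)\to 0$. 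This is the one place genuine work is required; everything else is the geometric sandwich plus standard random-walk maximal inequalities.
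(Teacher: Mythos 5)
Your approach is genuinely different from the paper's and is an attractive idea: rather than the paper's martingale-difference (Efron--Stein) decomposition of $D_n-\Exp D_n$ combined with a separate asymptotic for $\Exp D_n$, you sandwich $D_n$ between the longitudinal span $W_n$ and $\sqrt{W_n^2+4R_n^2}$ and control each piece directly. The geometric sandwich itself is correct. However, both of your key moment estimates fail under the theorem's hypothesis $\Exp(\|Z\|^2)<\infty$, and these are not cosmetic issues.

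First, your longitudinal step asserts that $-\min_{j\le n} S_j\cdot\hat\mu$ is $O(1)$ in $L^2$ ``when $\Exp(Z\cdot\hat\mu)>0$ and $\Exp((Z\cdot\hat\mu)^2)<\infty$.'' This is false in general: by the Kiefer--Wolfowitz theorem (applied after a sign change), a positive-drift walk has $\Exp\bigl[(-\inf_{j\ge 0}S_j\cdot\hat\mu)^2\bigr]<\infty$ if and only if $\Exp\bigl[((Z\cdot\hat\mu)^-)^3\bigr]<\infty$, i.e.\ one needs a \emph{third} moment of the negative part of the drift-direction increment. Under $\Exp(\|Z\|^2)<\infty$ alone the all-time infimum can have infinite second moment, so your claim ``$\Exp|W_n-S_n\cdot\hat\mu|^2=O(1)$'' is not justified. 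The weaker (and sufficient) statement $\Exp[(W_n-S_n\cdot\hat\mu)^2]=o(n)$ is in fact true, but to prove it with only second moments you need a uniform-integrability argument for the normalised running maximum of a mean-zero walk (obtained e.g.\ via truncation of the increments into a bounded part, which you treat with Azuma or BDG, plus a small-variance remainder treated with Doob) --- something closer in spirit to the paper's Lemma~\ref{lem:ui} than to the ladder-height estimate you invoke.

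Second, both your original route and your ``cleaner route'' for the transverse step rest on $\Exp(R_n^4)=O(n^2)$, which you attribute to Burkholder--Davis--Gundy. This is not available here: BDG with the optional quadratic variation gives $\Exp(R_n^4)\lesssim \Exp\bigl[(\sum_{k\le n}(Z_k\cdot\hat\mu_\perp)^2)^2\bigr]$, which is infinite if $\Exp\bigl[(Z\cdot\hat\mu_\perp)^4\bigr]=\infty$; the version with the predictable variation carries an additive term $\Exp[\max_k (Z_k\cdot\hat\mu_\perp)^4]$, again requiring fourth moments. Under the hypothesis $\Exp(\|Z\|^2)<\infty$ there is simply no bound $\Exp(R_n^4)=O(n^2)$ to be had, so your Cauchy--Schwarz step collapses. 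Again the conclusion $\Exp[(D_n-W_n)^2]=o(n)$ is salvageable, but only by replacing the fourth-moment estimate with a uniform-integrability argument for $n^{-1}R_n^2$ (which holds under second moments but takes some work to establish) and carefully interleaving the bounds $D_n-W_n\le 2R_n$ and $D_n-W_n\le 2R_n^2/W_n$. As your proposal stands, both halves of the argument use moments that the theorem does not assume, so the proof is incomplete.
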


Denote by $\cN(0,1)$ the standard normal distribution, and by $\tod$ convergence in distribution.
Theorem~\ref{thm2}  yields variance asymptotics and a central limit theorem when $\spara >0$, as follows.

\begin{corollary}
\label{cor:diam-clt}
Suppose that $\Exp ( \| Z  \|^2 ) < \infty$ and $\mu \neq \0$. Then $\lim_{n \to \infty} n^{-1} \Var D_n = \spara$.
Moreover, if $\spara > 0$, for $\zeta \sim \cN (0,1)$, as $n \to \infty$,
\[ \frac{ D_n - \Exp D_n}{\sqrt{ \Var D_n }} \tod \zeta, \text{ and } \frac{ D_n - n \| \mu \|}{\sqrt{ n \spara }} \tod \zeta .\]
\end{corollary}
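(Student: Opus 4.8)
The plan is to derive Corollary~\ref{cor:diam-clt} from Theorem~\ref{thm2} together with the classical central limit theorem for the one-dimensional random walk $S_n \cdot \hat\mu = \sum_{k=1}^n (Z_k \cdot \hat\mu)$. Write $X_k := Z_k \cdot \hat\mu$, so that $\Exp X_k = \mu \cdot \hat\mu = \|\mu\|$ and $\Var X_k = \Exp[((Z-\mu)\cdot\hat\mu)^2] = \spara$, using that $\Exp(\|Z\|^2) < \infty$ guarantees $\Exp(X_k^2) < \infty$. Set $R_n := D_n - S_n \cdot \hat\mu$; Theorem~\ref{thm2} says $n^{-1/2} R_n \to 0$ in $L^2$, i.e.\ $\Exp(R_n^2) = o(n)$, and in particular $n^{-1/2} R_n \to 0$ in probability and $\Exp R_n = o(n^{1/2})$ (by Cauchy--Schwarz or Jensen).

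For the variance asymptotics, expand
\[ \Var D_n = \Var (S_n \cdot \hat\mu) + 2 \Cov(S_n \cdot \hat\mu, R_n) + \Var R_n = n \spara + 2 \Cov(S_n \cdot \hat\mu, R_n) + \Var R_n . \]
Here $\Var R_n \leq \Exp(R_n^2) = o(n)$, and by Cauchy--Schwarz $|\Cov(S_n \cdot \hat\mu, R_n)| \leq \sqrt{\Var(S_n \cdot \hat\mu)}\sqrt{\Var R_n} \leq \sqrt{n\spara}\cdot o(n^{1/2}) = o(n)$. Dividing by $n$ gives $n^{-1} \Var D_n \to \spara$, which is the first assertion. (When $\spara = 0$ the argument still gives $n^{-1}\Var D_n \to 0$.)

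For the central limit theorem, assume $\spara > 0$. By the classical CLT, $(n\spara)^{-1/2}(S_n \cdot \hat\mu - n\|\mu\|) \tod \zeta$. Since $(n\spara)^{-1/2} R_n \to 0$ in probability (as $\Exp(R_n^2) = o(n)$), Slutsky's theorem yields
\[ \frac{D_n - n\|\mu\|}{\sqrt{n\spara}} = \frac{S_n \cdot \hat\mu - n\|\mu\|}{\sqrt{n\spara}} + \frac{R_n}{\sqrt{n\spara}} \tod \zeta , \]
which is the second displayed limit. For the first, combine this with $n^{-1}\Var D_n \to \spara > 0$: writing $\frac{D_n - \Exp D_n}{\sqrt{\Var D_n}} = \frac{D_n - n\|\mu\|}{\sqrt{n\spara}}\cdot\frac{\sqrt{n\spara}}{\sqrt{\Var D_n}} + \frac{n\|\mu\| - \Exp D_n}{\sqrt{\Var D_n}}$, the first factor ratio tends to $1$, and the deterministic shift is $\frac{n\|\mu\| - \Exp D_n}{\sqrt{\Var D_n}} = \frac{-\Exp R_n}{\sqrt{\Var D_n}}$, which tends to $0$ because $\Exp R_n = o(n^{1/2})$ while $\sqrt{\Var D_n} \sim \sqrt{n\spara}$; another application of Slutsky finishes the proof.

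There is essentially no serious obstacle here: the entire content is in Theorem~\ref{thm2}, and what remains is a routine Slutsky/Cauchy--Schwarz bookkeeping exercise. The only point requiring minor care is the bound $\Exp R_n = o(n^{1/2})$ used to control the centring discrepancy between $\Exp D_n$ and $n\|\mu\|$ in passing from the $n\|\mu\|$-centred statement to the $\Exp D_n$-centred one; this follows immediately from Jensen's inequality $|\Exp R_n| \leq \sqrt{\Exp(R_n^2)} = o(n^{1/2})$.
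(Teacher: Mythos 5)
Your proof is correct and is exactly the standard deduction the paper has in mind when it says Corollary~\ref{cor:diam-clt} ``is deduced from Theorem~\ref{thm2} in a very similar manner to how Theorems~1.1 and~1.2 in [wx1] were deduced from Theorem~1.3 there''; the paper omits the bookkeeping and you have supplied it. The decomposition $D_n = S_n\cdot\hat\mu + R_n$ with $\Exp(R_n^2)=o(n)$, the Cauchy--Schwarz bound on $\Cov(S_n\cdot\hat\mu,R_n)$ and on $|\Exp R_n|$, and the two Slutsky applications are precisely the intended route.
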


The degenerate case $\spara =0$ corresponds to the case where $Z \cdot \hat \mu = \| \mu \|$ a.s., and is of its own interest. It includes, for example, the case where $Z = (1,1)$ or $(1,-1)$, each with probability $1/2$, in which the two-dimensional walk $S_n$ corresponds to the space-time diagram of a one-dimensional simple symmetric random walk.
In the case $\spara = 0$, Corollary~\ref{cor:diam-clt} says only that $\Var D_n = o(n)$. We prove the following sharper result
which requires some additional conditions.

\begin{theorem}\label{thm3}
Suppose that $\Exp ( \| Z  \|^p ) < \infty$ for some $p>2$, 
 $\mu \neq \0$, and $\spara = 0$. Then,
\begin{equation}\label{eqn4}
D_n - \| \mu \| n \tod \frac{\sperp \zeta^2}{2 \| \mu \|} ,
\end{equation}
where $\zeta \sim \cN (0,1)$. Further, if, in addition, $\Exp(\| Z \|^p) < \infty$ for some $p>4$, then
\begin{equation} \label{eqn5}
\lim_{n\to \infty} \Var D_n = \frac{{\sigma^4_{\mu_\per}}}{2 \| \mu \|^2}.
\end{equation}
\end{theorem}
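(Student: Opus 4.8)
The plan is to reduce everything to an analysis of the walk component perpendicular to $\hat\mu$. Write $Z = (Z^\parallel, Z^\perp)$ in the orthonormal frame $(\hat\mu, \hat\mu^\perp)$, so that $\spara = 0$ forces $Z^\parallel = \|\mu\|$ a.s., whence $S_n \cdot \hat\mu = \|\mu\| n$ deterministically and $S_n \cdot \hat\mu^\perp = \sum_{k=1}^n Z^\perp_k =: W_n$ is a mean-zero, variance-$\sperp$ random walk on $\R$. Thus $S_n = (\|\mu\| n, W_n)$, which is precisely a space-time picture, and $\cH_n$ is the convex hull of the graph $\{(k\|\mu\|, W_k): 0 \le k \le n\}$. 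The first step is to give an exact or near-exact geometric formula for $D_n$ in this setting: the diameter is achieved by a pair of vertices, and for a point set of the form $(t_k, w_k)$ with $t_0 < t_1 < \cdots < t_n$ equally spaced, the diameter is $\max_{0 \le i < j \le n} \sqrt{\|\mu\|^2 (j-i)^2 + (W_j - W_i)^2}$. Writing this as $\|\mu\| n \sqrt{1 + (\text{something}/\|\mu\| n)^2}$ and expanding, one expects
\[ D_n = \|\mu\| n + \frac{1}{2\|\mu\| n}\, \max_{0 \le i < j \le n} \frac{\|\mu\|^2 (n^2 - (j-i)^2) \cdot (\cdots) + (W_j-W_i)^2 \cdot (\cdots)}{\cdots} + (\text{lower order}), \]
and the key point is that the maximising pair $(i,j)$ will, with high probability, have $i$ within $O(\sqrt n)$ of $0$ and $j$ within $O(\sqrt n)$ of $n$, so that $j - i = n - O(\sqrt n)$ and $n^2 - (j-i)^2 = O(n^{3/2})$, which after division by $\|\mu\| n$ contributes $O(n^{1/2})$ — this term must be controlled carefully, since naively it could dominate. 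I expect that in fact one shows $D_n - \|\mu\| n = \frac{1}{2\|\mu\| n}\max_{0\le i<j\le n}(W_j - W_i)^2 + o_{\Pr}(1)$ after a more careful accounting, using that near the optimum $j-i \approx n$ so the correction from $n^2-(j-i)^2$ interacts with $(W_j - W_i)^2 = O(n)$ to give only $o(1)$.

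The second step is the limit theorem \eqref{eqn4}. Given the reduction, $\max_{0 \le i < j \le n}(W_j - W_i) = \max_{0 \le k \le n} W_k - \min_{0 \le k \le n} W_k$ is the range of the walk $W$, and by Donsker's theorem (valid since $\Exp[(Z^\perp)^2] = \sperp < \infty$) we have $n^{-1/2}(\max_k W_k - \min_k W_k) \tod \sperp^{1/2}(\max_{[0,1]} B - \min_{[0,1]} B)$ for a standard Brownian motion $B$. Hence $\frac{1}{2\|\mu\| n}(\max_{i<j}(W_j-W_i))^2 \tod \frac{\sperp}{2\|\mu\|}(\max_{[0,1]}B - \min_{[0,1]}B)^2$. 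But the claimed limit in \eqref{eqn4} is $\frac{\sperp}{2\|\mu\|}\zeta^2$ with $\zeta$ standard normal, and $(\max B - \min B)^2 \ne \zeta^2$ in distribution — so the reduction of the previous paragraph must be sharper: the diameter is NOT governed by the full range but by the maximal one-sided fluctuation. Reconsidering: when $j-i$ is forced close to $n$, we need $i = o(n)$ AND $j = n - o(n)$, so $W_j - W_i \approx W_{n} \cdot(\text{endpoint-ish})$; more precisely the relevant quantity becomes $\max_{i=O(\sqrt n)} \max_{j = n - O(\sqrt n)} (W_j - W_i)$, and by time-reversal and the fact that $W_{n-O(\sqrt n)} - W_{O(\sqrt n)} \approx W_n \sim \cN(0, \sperp n)$ with the extra maximisation over $O(\sqrt n)$-windows contributing only $O(n^{1/4})$, one gets $D_n - \|\mu\| n = \frac{W_n^2}{2\|\mu\| n} + o_{\Pr}(1)$, and $n^{-1/2} W_n \tod \sperp^{1/2}\zeta$ gives \eqref{eqn4}. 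This dichotomy — range versus endpoint — is the conceptual crux, and getting the window estimates right (why the maximisation over the $O(\sqrt n)$ windows at each end is negligible) is where the moment hypothesis $\Exp\|Z\|^p < \infty$, $p > 2$, enters, via a maximal inequality (e.g. Doob or a Marcinkiewicz–Zygmund-type bound) ensuring $\max_{k \le c\sqrt n} |W_k| = o_{\Pr}(n^{1/2})$ and controlling how far the optimal $i,j$ can stray from the endpoints.

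For the variance asymptotic \eqref{eqn5}, the plan is to upgrade the distributional convergence $D_n - \|\mu\| n \tod \frac{\sperp}{2\|\mu\|}\zeta^2$ to convergence of second moments, which requires uniform integrability of $(D_n - \|\mu\| n)^2$. Since $D_n - \|\mu\| n \le \frac{1}{2\|\mu\| n}(\max_k W_k - \min_k W_k)^2 + O(1)$ pointwise (a crude upper bound suffices here), it is enough to show $\Exp[(\max_k W_k - \min_k W_k)^4/n^2]$ is bounded, i.e.\ $\Exp[(\max_k W_k - \min_k W_k)^4] = O(n^2)$; by Doob's $L^4$ maximal inequality this follows from $\Exp[W_n^4] = O(n^2)$, which holds provided $\Exp[(Z^\perp)^4] < \infty$, hence the hypothesis $\Exp\|Z\|^p < \infty$ for some $p > 4$ (the extra room above $4$ giving the needed integrability via Marcinkiewicz–Zygmund if one is careful, or one simply assumes $p>4$ as stated). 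Then $\lim_n \Var D_n = \Var(\frac{\sperp}{2\|\mu\|}\zeta^2) = \frac{\sperp^2}{4\|\mu\|^2} \cdot \Var(\zeta^2) = \frac{\sperp^2}{4\|\mu\|^2}\cdot 2 = \frac{\sigma^4_{\mu_\per}}{2\|\mu\|^2}$, matching \eqref{eqn5}. The main obstacle throughout is the first step: establishing rigorously that the diameter-achieving pair concentrates near the two temporal endpoints and quantifying the error, since this is what distinguishes the $\zeta^2$ limit from the (incorrect) Brownian-range limit and is the only genuinely geometric input; once that localisation estimate is in hand, the probabilistic limit theorems are routine invariance-principle and uniform-integrability arguments.
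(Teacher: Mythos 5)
Your approach is essentially the same as the paper's: pass to the $(\hat\mu,\hat\mu_\per)$ frame, observe the space--time picture $S_k = (\|\mu\|k, Y_k)$, localise the diameter-achieving pair near the temporal endpoints, expand to get $D_n-\|\mu\|n\approx Y_n^2/(2\|\mu\|n)$, invoke the CLT and Slutsky for \eqref{eqn4}, and prove uniform integrability via Doob's $L^p$ inequality and Marcinkiewicz--Zygmund for \eqref{eqn5}. Your mid-proposal self-correction from ``range'' to ``endpoint'' is precisely the content of the paper's localisation Lemma~\ref{lem:Dattained}, and your uniform-integrability sketch mirrors Lemma~\ref{DnUnifIntLemma} closely.

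One imprecision is worth flagging, since it concerns the genuinely subtle point. You claim the maximisation over $O(\sqrt n)$-windows contributes only $O(n^{1/4})$, and that $p>2$ is what delivers a maximal inequality ensuring $\max_{k\le c\sqrt n}|W_k|=o_P(n^{1/2})$. In fact that in-probability bound already follows from $\Exp(\|Z\|^2)<\infty$, so $p>2$ is not used there. The paper's route instead establishes $D_n - \|S_n\| \to 0$ \emph{almost surely} (Lemma~\ref{lem:Dconverge}, combined with Lemma~\ref{lem2} for the remaining limit), and it is the a.s.\ control of the terminal window, $\max_{n-n^\beta\le j\le n}|Y_n-Y_j|$, that requires $p>2$, via Lemma~\ref{LILStyleLem}. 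Remark~\ref{p2condition} explicitly warns against the $O(n^{\beta/2})$-type heuristic you invoke: under a bare second moment a single increment near the terminal endpoint can be nearly of order $n^{1/2}$, so the a.s.\ bound one gets is only $o(n^{1/2-\eps})$, and only for $\beta$ chosen small depending on $p$. Your in-probability route would, if carried out carefully, yield \eqref{eqn4} (possibly even under a weaker moment hypothesis), but it does not by itself give the a.s.\ convergence $D_n - \|S_n\|\to 0$ that the paper uses to feed both conclusions in a unified way.
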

\begin{remarks} \begin{itemize}
\item[(i)]
The higher moments conditions required in Theorem~\ref{thm3}
are necessary for the proofs that we employ; see also~Remark~\ref{p2condition} below.
\item[(ii)] The statement~\eqref{eqn4}   may be written as 
	\begin{equation}
	\label{eq8}
	D_n -  S_n \cdot \hat{\mu}  \tod \frac{\sigma_{\mu_{\perp}}^2 \zeta^2}{2\|\mu\|}.\end{equation}
		It is natural to ask whether~\eqref{eq8} also holds in the case where $\sigma_\mu^2 >0$;
	if it did, then it would provide an alternative proof of the central limit theorem in Corollary~\ref{cor:diam-clt}.
Simulations suggest that when $\sigma_\mu^2 >0$, equation~\eqref{eq8} holds in some, but not all cases.
	\end{itemize}
\end{remarks}

\subsection{Open problems and paper outline}

When $\Exp ( \| Z  \|^2 ) < \infty$, $\mu \neq \0$, and $\spara =0$, Theorem~\ref{thm1} 
(see also Theorem~1 in~\cite{wx1})
shows that $\Var L_n = o(n)$. It was conjectured in~\cite{wx1} that $\Var L_n = O (\log n)$ in this case,
which is the subject of ongoing work.
We make the following stronger conjecture. 

\begin{conjecture}
Suppose that $\Exp ( \| Z  \|^2 ) < \infty$, $\mu \neq \0$, $\spara = 0$, and $\sperp >0$. Then 
\[ \lim_{n \to \infty} \frac{\Var L_n}{\log n} \text{ exists in } (0,\infty). \]
\end{conjecture}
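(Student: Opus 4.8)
The plan is to reduce $L_n$ to the concave majorant and convex minorant of a one-dimensional walk, and then to exploit the Sparre Andersen permutation description of the faces of a majorant together with the Poisson approximation for the cycle counts of a uniform random permutation.

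First I would rotate coordinates so that $\hat\mu = \unitzero$. Since $\spara = 0$ we have $S_k = (\|\mu\| k, W_k)$ for all $k$, a.s., where $W_k := \sum_{j=1}^k Z_j \cdot \unithalfpi$ is a mean-zero random walk with increment variance $\sperp > 0$. Because $\0$ and $S_n$ are the unique leftmost and rightmost vertices of $\cH_n$, its boundary is the union of the concave majorant and the convex minorant of $\{(\|\mu\|k, W_k)\}_{0 \le k \le n}$, so
\[ L_n = 2\|\mu\| n + V_n^+ + V_n^- , \]
where $V_n^\pm \ge 0$ denote the excess lengths of the two polygonal chains over the segment $[\0, S_n]$. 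Writing each chain as a union of faces gives $V_n^+ = \sum_j \psi(\ell_j, h_j)$ with $\psi(\ell, h) := \sqrt{(\|\mu\| \ell)^2 + h^2} - \|\mu\| \ell$, where $(\ell_j, h_j)$ is the width (in steps) and the vertical displacement of the $j$th face of the concave majorant, and similarly for $V_n^-$. Theorems~\ref{thm1} and~\ref{thm:drift-mean} already tell us that $V_n^+ + V_n^- = o_{L^2}(\sqrt n)$ and $\Exp(V_n^+ + V_n^-) = (\sperp/\|\mu\| + o(1)) \log n$; the task is to sharpen this to the level of the variance.

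Next I would invoke the Sparre Andersen description of the faces of the concave majorant of a walk with i.i.d.\ increments (after a routine regularisation to deal with atoms in the law of $Z$): the multiset $\{(\ell_j, h_j)\}_j$ has the same distribution as $\bigl\{ \bigl(|C|, \sum_{i \in C} Y_i'\bigr) : C \text{ a cycle of } \pi_n \bigr\}$, where $\pi_n$ is a uniform random permutation of $[n]$ and $(Y_i')_{i=1}^n$ are i.i.d.\ copies of $Z \cdot \unithalfpi$ independent of $\pi_n$. Since $\psi(\ell, h)$ depends on $h$ only through $h^2$, this immediately yields $V_n^- \eqd V_n^+$ (replacing increments by their negatives changes nothing), and, grouping cycles by length,
\[ V_n^+ \eqd \sum_{\ell=1}^n X_\ell^{(n)}, \qquad X_\ell^{(n)} := \sum_{C\,:\,|C| = \ell} \psi\Bigl(\ell, \textstyle\sum_{i \in C} Y_i'\Bigr). \]
By the Feller coupling, the counts of length-$\ell$ cycles are, for distinct $\ell$, asymptotically independent Poisson variables of mean $1/\ell$, so each $X_\ell^{(n)}$ is well approximated by a compound Poisson sum of rate $1/\ell$ whose jump law is that of $\psi_\ell := \psi(\ell, Y_1' + \cdots + Y_\ell')$, and the $X_\ell^{(n)}$ are asymptotically independent across $\ell$. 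Hence $\Var V_n^+ \approx \sum_{\ell=1}^n \ell^{-1}\Exp[\psi_\ell^2]$. Since $(Y_1' + \cdots + Y_\ell')/\sqrt{\ell \sperp} \tod \zeta \sim \cN(0,1)$ and $\psi_\ell \to \sperp\zeta^2/(2\|\mu\|)$ in the sense of~\eqref{eqn4}, one expects $\Exp[\psi_\ell^2] \to \tfrac34(\sperp/\|\mu\|)^2$, so $\sum_{\ell=1}^n \ell^{-1}\Exp[\psi_\ell^2] \sim \tfrac34(\sperp/\|\mu\|)^2 \log n$, with the limiting constant positive because $\sperp > 0$. Writing $\Var L_n = \Var V_n^+ + \Var V_n^- + 2\Cov(V_n^+, V_n^-)$ would then give $\lim_n \Var L_n / \log n = \tfrac32(\sperp/\|\mu\|)^2 + 2\lim_n \Cov(V_n^+, V_n^-)/\log n$, provided the last limit exists and is finite.

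The hard parts are two. The first is the cross-covariance: the marginal law of each majorant reduces to a single uniform permutation, but the joint law of the concave majorant and convex minorant of one walk path does not, so one must either identify the correct joint combinatorial structure, or argue instead that $V_n^+$ and $V_n^-$ depend, modulo a remainder contributing $o(\log n)$ to the covariance, on essentially disjoint information. The second is that even for the marginals, converting the compound-Poisson heuristic into a theorem with a usable error term --- showing that $\Exp[\psi_\ell^2]$ is bounded in $\ell$ and converges, and that the few large cycles (corresponding to wide, shallow faces that carry $O(1)$ length apiece but with heavy fluctuations) do not affect the $\log n$ asymptotics --- appears to require a moment hypothesis strictly stronger than $\Exp(\|Z\|^2) < \infty$, in view of the bound $\psi_\ell \le (Y_1' + \cdots + Y_\ell')^2/(2\|\mu\| \ell)$ (compare Remark~(i) following Theorem~\ref{thm3}); proving the conjecture under an optimal moment condition would presumably demand a more delicate truncation of the faces by slope, with the extreme faces treated separately. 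Establishing the $O(\log n)$ upper bound on $\Var L_n$ --- already flagged as ongoing work --- via an Efron--Stein or martingale argument is likely a useful preliminary step.
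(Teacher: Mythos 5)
This statement is presented in the paper as a conjecture and explicitly flagged as open (``the subject of ongoing work''), so there is no in-paper proof against which to measure your proposal; it has to stand or fall on its own merits, and it is a heuristic outline rather than a proof. The setup is sound: with $\spara=0$, $S_k = (\|\mu\|k, W_k)$ is strictly monotone in the drift direction, so the boundary of $\cH_n$ is exactly the union of the concave majorant and convex minorant of the graph of the mean-zero walk $W$, and $L_n - 2\|\mu\|n = V_n^+ + V_n^-$ (one small imprecision: $V_n^\pm$ is the excess of the chain length over the \emph{horizontal span} $\|\mu\|n$, not over the chord $[\0,S_n]$, whose length is $\|S_n\|\geq\|\mu\|n$). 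The Spitzer--Sparre~Andersen cycle-type description of the faces and the resulting estimate $\Var V_n^\pm \sim \tfrac{3\sigma^4_{\mu_\per}}{4\|\mu\|^2}\log n$ are plausible and are consistent with Theorem~\ref{thm:drift-mean} at first order, which is a reasonable sanity check. In fact you can bypass the Feller coupling entirely: condition on the cycle-count vector $(a_\ell)_\ell$, use that cycles are disjoint (so the $\psi$-contributions are conditionally independent), and invoke the exact factorial moments of cycle counts of a uniform permutation; the total-variance decomposition then gives the two $\log n$ terms directly without asymptotic-independence arguments.

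You correctly identify the two remaining gaps, and both are genuine. The cross-covariance $\Cov(V_n^+, V_n^-)$ is the central obstruction: the cycle-type identity is a statement about \emph{one} hull of the path, and the joint law of the concave majorant and convex minorant of the same path admits no comparable permutation description. Without a mechanism to compute this covariance to order $\log n$, or to show it is $o(\log n)$, the limit of $\Var L_n/\log n$ is not determined; your two proposed escape routes (find the right joint combinatorics, or argue near-disjointness of the relevant information) are named but not carried out, and neither is obviously available. Separately, the moment condition: your own bound $\psi_\ell \le (Y_1'+\cdots+Y_\ell')^2/(2\|\mu\|\ell)$ makes $\Exp[\psi_\ell^2]$ a fourth-moment quantity in the perpendicular increment, so the compound-Poisson (or exact factorial-moment) variance calculation as written requires $p>4$, paralleling the hypothesis of Theorem~\ref{thm3}; the conjecture is stated under $\Exp(\|Z\|^2)<\infty$ only, and under that hypothesis it is not even clear that $\Exp[\psi_\ell^2]$ stays bounded, so the route you describe would at best establish a weaker version of the conjecture under stronger moment assumptions. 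As a sketch of a line of attack this is sensible and well-informed; as a proof it is incomplete on both counts.
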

 
The outline of the remainder of the paper is as follows. 
In Section~\ref{sec:lln} we give the proofs of the laws of large numbers
Theorems~\ref{t:ss_lln} and~\ref{t:diam_lln}. In Section~\ref{sec:zero-one}
we present a zero-one law for the convex hull of random walk (Theorem~\ref{thm:zero-one}),
which we then use to prove Theorem~\ref{thm:shape} and Corollary~\ref{cor:infsupLD}.
Section~\ref{sec:perim-drift} presents the proofs of Theorems~\ref{thm1} and~\ref{thm:drift-mean}.
Sections~\ref{sec:diam-drift} and~\ref{sec:diam-degen} give the proofs of Theorems~\ref{thm2} and~\ref{thm3} respectively.
Finally, 
rather than interrupting the flow of the main arguments, we present in Appendix~\ref{sec:appendix}
a couple of auxiliary technical results.

\section{Laws of large numbers}
\label{sec:lln}

Throughout we use the notation $\be_\theta = (\cos \theta, \sin \theta)$ for the unit vector
in direction $\theta$. We recall (see e.g.~equation~(2.1) of~\cite{ss})
that \emph{Cauchy's formula} states that 
for a finite point set $\{ \bx_0, \bx_1, \ldots, \bx_n \} \subset \R^2$,
the perimeter length of $\hull \{ \bx_0, \bx_1, \ldots, \bx_n \}$
is given by
\[ \int_0^{2\pi} \max_{0 \leq k \leq n} ( \bx_k \cdot \be_\theta ) \ud \theta .\]

\begin{proof}[Proof of Theorem~\ref{t:ss_lln}.]
Cauchy's formula applied to our random walk implies that
\begin{equation}
\label{eq:cauchy}
 L_n = \int_{0}^{2\pi} \max_{0 \leq k \leq n } ( S_k \cdot \be_\theta ) \ud \theta .\end{equation}

First suppose that $\Exp \| Z  \| < \infty$. 
Then the strong law of large numbers says that for any $\eps>0$
there exists $N_\eps$ with $\Pr ( N_\eps < \infty ) =1$ for which
\begin{equation}
\label{eq:lln}
\| S_n - n \mu \| < n \eps , \text{ for all } n \geq N_\eps .\end{equation}
Since $S_0 =\0$, taking $k=0$ and $k=n$ in~\eqref{eq:cauchy} and writing $x^+ := x \1 { x> 0}$, we have 
\begin{align}
\label{eq:lower-bound}
L_n  \geq \int_{0}^{2\pi} (S_n \cdot \be_\theta )^+ \ud \theta = 2 \| S_n \| ,
\end{align}
by Cauchy's formula for $\hull \{ \0, S_n \}$. For $n \geq N_\eps$
we have from~\eqref{eq:lln} that 
\[ \| S_n \| \geq \| n \mu \| - \| S_n - n \mu \| \geq n \| \mu \| - n \eps .\]
Since $\eps >0$ was arbitrary, it follows that
$\liminf_{n \to \infty} n^{-1} L_n \geq 2 \| \mu \|$, a.s.

On the other hand, for any $\eps >0$, we have from~\eqref{eq:lln} that 
\begin{align*}
\max_{0 \leq k \leq n} (S_k \cdot \be_\theta ) & \leq \max_{0 \leq k \leq N_\eps} (S_k \cdot \be_\theta ) +  \max_{N_\eps \leq k \leq n} (S_k \cdot \be_\theta ) \\
& \leq \max_{0 \leq k \leq N_\eps} \| S_k \| + \max_{0 \leq k \leq n} \left( k (\mu \cdot \be_\theta + \eps) \right) \\
& = \max_{0 \leq k \leq N_\eps} \| S_k \| +   n  ( \mu \cdot \be_\theta +   \eps)^+ .
\end{align*}
Let $A_\eps := \{ \theta \in [0,2\pi] : \mu \cdot \be_\theta > - \eps \}$.
Then
\[ \int_0^{2\pi} ( \mu \cdot \be_\theta + \eps )^+ \ud \theta = \int_{A_\eps} ( \mu \cdot \be_\theta + \eps ) \ud \theta
\leq \int_{A_\eps}  \mu \cdot \be_\theta \ud \theta + 2\pi \eps .\]
But
\begin{align*} \int_{A_\eps}  \mu \cdot \be_\theta \ud \theta
&  = \int_{A_0} \mu \cdot \be_\theta \ud \theta + \int_{A_\eps \setminus A_0}  
\mu \cdot \be_\theta \ud \theta \\
& \leq \int_0^{2\pi} ( \mu \cdot \be_\theta )^+ \ud \theta +  \| \mu \| | A_\eps \setminus A_0 |
.\end{align*}
Hence, from~\eqref{eq:cauchy} we obtain
\[ L_n \leq 2 \pi \max_{0 \leq k \leq N_\eps} \| S_k \|  + n \int_0^{2\pi} ( \mu \cdot \be_\theta )^+ \ud \theta + 2 \pi n \eps +  n \| \mu \| | A_\eps \setminus A_0 |.\]
Since $\Pr ( N_\eps < \infty ) = 1$, 
it follows from Cauchy's formula for $\hull \{ \0, \mu \}$ that, a.s.,
\[ \limsup_{n \to \infty} n^{-1} L_n \leq 2 \| \mu \| + 2 \pi  \eps +  \| \mu \| | A_\eps \setminus A_0 | .\]
Since $\eps >0$ was arbitrary, and $| A_\eps \setminus A_0 | \to 0$ as
$\eps \to 0$, we get $\limsup_{n \to \infty} n^{-1} L_n \leq 2 \| \mu \|$, a.s.
Thus the almost sure convergence statement is established. 

Moreover, from~\eqref{eq:cauchy},
\begin{align*} L_n & \leq \int_0^{2\pi} \max_{0 \leq k \leq n} \| S_k \| \ud \theta \\
& \leq 2 \pi  \max_{0 \leq k \leq n} \sum_{j=1}^k \| Z_j \| \\
& \leq 2 \pi \sum_{j=1}^n \| Z_j \| .\end{align*}
The strong law shows that, a.s.,
$n^{-1} \sum_{j=1}^n \| Z_j \| \to \Exp \| Z  \| < \infty$,
while $\Exp ( n^{-1} \sum_{j=1}^n \| Z_j \|) = \Exp \| Z \|$; hence
Pratt's lemma \cite[p.~221]{gut} implies that $n^{-1} L_n \to 2 \| \mu \|$ in $L^1$.

Finally, suppose that $\Exp \| Z  \| = \infty$. 
From~\eqref{eq:lower-bound}, it suffices to show that 
\[ \limsup_{n \to\infty} n^{-1} \| S_n \| =\infty, \text{ a.s.} \]
To this end we follow~\cite[p.~297]{gut}.
First (see e.g.~\cite[p.~75]{gut}) $\Exp \| Z  \| = \infty$ implies that
for any $c >0$, we have
$\sum_{n =1}^\infty \Pr ( \| Z_n \| \geq c n ) = \infty$, which,
by the Borel--Cantelli lemma, implies that $\Pr ( \| Z_n \| \geq c n \text{ i.o.} ) =1$.
But $\| Z_n \| \leq \| S_n \| + \| S_{n-1} \|$, so it follows that
$\Pr ( \| S_n \| \geq cn/2 \text{ i.o.} ) = 1$. In other words,
$\limsup_{n \to \infty} n^{-1} \| S_n \| \geq c/2$, a.s., and,
since $c>0$ was arbitrary, we get the result.
\end{proof}

\begin{proof}[Proof of Theorem~\ref{t:diam_lln}.]
Since $\| S_n \| \leq D_n \leq L_n /2$ we can apply the strong law for $S_n$,
which implies that $n^{-1} \| S_n \| \to \| \mu \|$, and Theorem~\ref{t:ss_lln},
to deduce that $n^{-1} D_n \to \| \mu \|$, a.s. Since $n^{-1} D_n \leq n^{-1} L_n /2$
we may again apply Pratt's lemma \cite[p.~221]{gut} to deduce the $L^1$ convergence.
Finally, if $\Exp \| Z  \| = \infty$ we use the bound $D_n \geq L_n / \pi$ and the final
statement in Theorem~\ref{t:ss_lln} to deduce that $\limsup_{n \to \infty} n^{-1} D_n = \infty$, a.s.
\end{proof}

\section{A zero-one law for convex hulls}
\label{sec:zero-one}

A key ingredient in the proof of Theorem~\ref{thm:shape} is a \emph{zero-one law} (Theorem~\ref{thm:zero-one} below).
Before we state the result, we need some notation.
Define $\sigma$-algebras $\cF_0 := \{ \emptyset, \Omega \}$ and $\cF_n := \sigma (Z_1, \ldots, Z_n)$ for $n \geq 1$; also set
$\cF_\infty := \sigma ( \cup_{n \geq 0} \cF_n )$.
 Let $\rho_d$ denote the Euclidean metric on $\R^d$,
and for $A \subseteq \R^d$ and $\bx \in \R^d$, let $\rho_d ( \bx, A) := \inf_{\by \in A} \rho_d ( \bx, \by)$. 

Let $\cK$ denote the set of compact convex subsets of $\R^2$ containing the origin, endowed
with the Hausdorff metric $\rho_H$ defined for $K_1, K_2 \in \cK$ by
\begin{equation}
\label{eq:rhoH}
 \rho_H ( K_1, K_2) = \inf \{ \eps \geq 0 : K_1 \subseteq K_2^\eps \text{ and } K_2 \subseteq K_1^\eps \} , \end{equation}
where $K^\eps := \{ \bx \in \R^2 : \rho_2 ( \bx, K) \leq \eps \}$.
The metric $\rho_H$ generates the associated Borel $\sigma$-algebra
$\cB ( \cK )$.
Since the function $(\bx_0, \bx_1, \ldots, \bx_n ) \mapsto
\hull \{ \bx_0, \bx_1, \ldots, \bx_n \}$ (with $\bx_0:=\0$) is continuous from $(\R^{2(n+1)},\rho_{2(n+1)})$ to $(\cK,\rho_H)$, it is measurable
from $(\R^{2(n+1)}, \cB (\R^{2(n+1)}) )$ to $(\cK, \cB (\cK))$; thus $\cH_n$ is a $\cK$-valued random variable,
and $\cH_n$ is $\cF_n$-measurable.

For $n \geq 0$, set $\cT_n := \sigma ( \cH_n , \cH_{n+1}, \ldots )$
and define $\cT := \cap_{n \geq 0} \cT_n$.
Also, for $n \geq 0$ define
 \[ r_n := \inf \{ \| \bx \| : \bx \in \R^2 \setminus \cH_n \}. \]
Note that $r_n$ is non-decreasing. Here is the zero-one law.

\begin{theorem}
\label{thm:zero-one}
Suppose that $r_n \to \infty$ a.s.
Then if $A \in \cT$, $\Pr (A) \in \{0,1\}$.
\end{theorem}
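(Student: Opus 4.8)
The plan is to show that the tail $\sigma$-algebra $\cT$ is trivial by comparing it against the exchangeable (or rather, the standard tail) $\sigma$-algebra of the i.i.d.\ sequence $(Z_n)$ and invoking the Kolmogorov zero-one law. The obstacle is that $\cH_n$ is \emph{not} a function of $Z_{n+1}, Z_{n+2}, \ldots$ alone --- it depends on the whole past $S_0, \ldots, S_n$ --- so $\cT$ is not obviously contained in the Kolmogorov tail $\cap_n \sigma(Z_{n+1}, Z_{n+2}, \ldots)$. The key idea is that, because $r_n \to \infty$ a.s., the \emph{location of the origin inside $\cH_n$} becomes asymptotically irrelevant: a bounded translation of the whole walk eventually does not change the convex hull at all, since once $r_n$ exceeds the size of the translation, the shifted origin is already an interior point and adds nothing to the hull.

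First I would set up the shift. For a fixed $m \geq 1$, consider the "delayed" walk $S'_k := \0$ for $k=0$ and $S'_k := \sum_{j=m+1}^{m+k} Z_j = S_{m+k} - S_m$ for $k \geq 1$, and let $\cH'_k := \hull\{S'_0, \ldots, S'_k\}$; this is built from $Z_{m+1}, Z_{m+2}, \ldots$ only, so $\cT' := \cap_k \sigma(\cH'_k, \cH'_{k+1}, \ldots) \subseteq \cap_n \sigma(Z_{n+1}, Z_{n+2}, \ldots)$, which is trivial by Kolmogorov. Now $\cH_{m+k} = \hull\left( \{S_0, \ldots, S_m\} \cup (S_m + \cH'_k) \right)$. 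The point set $\{S_0, \ldots, S_m\}$ is a fixed (finite, $\cF_m$-measurable) set $F$; writing $r'_k$ for the analogue of $r_k$ for $\cH'_k$, once $r'_k > 2\max_{0 \leq i \leq m}\|S_i\|$ (which happens eventually, a.s., since $r'_k \to \infty$ by the same argument that gives $r_n \to \infty$, the walks having the same increment law), every point of $F - S_m$ lies strictly inside $\cH'_k$, hence $S_m + \cH'_k \supseteq F$, and therefore $\cH_{m+k} = S_m + \cH'_k$. So for each $m$, there is an a.s.-finite random $K_m$ with $\cH_{m+k} = S_m + \cH'_k$ for all $k \geq K_m$.

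Next I would use this to transfer triviality. The issue is the translation by $S_m$, which is not tail-measurable. But consider instead the \emph{differences of hulls}: for a set $A \in \cT$ I want to show $\mathbf{1}_A$ is a.s.\ constant. Fix $m$; since $A \in \cT \subseteq \cT_m = \sigma(\cH_m, \cH_{m+1}, \ldots)$, and on the event $\{k \geq K_m\}$ we have $\cH_{m+k} = S_m + \cH'_k$, the hull $\cH'_k$ is recovered from $\cH_{m+k}$ and $\cH_{m+k-1}$ (and the earlier $\cH_{m+j}$) up to the common translate --- more precisely, $\cH'_k$ and $\cH'_{k-1}$ differ from $\cH_{m+k}$ and $\cH_{m+k-1}$ by the \emph{same} translation, so any shift-invariant measurable functional of the sequence $(\cH_{m+j})_{j \geq 0}$ that is measurable with respect to its tail is equal a.s.\ to the corresponding functional of $(\cH'_j)$. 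I would formalise this by noting that $\cT_m$, modulo the shift, is contained in $\cT'$ up to null sets: letting $\cG$ be the $\sigma$-algebra generated by the differences $\cH_{m+k} \ominus \cH_{m+k-1}$ (support functions subtracted) for large $k$, one has $A \in \cT$ implies $\mathbf 1_A$ is $\cap_m \cG_m$-measurable up to null sets, and $\cap_m \cG_m \subseteq \cap_m \sigma(Z_{m+1}, \ldots)$ because differences of the delayed hulls are functions of the delayed increments. The cleanest route, which I would adopt, is: for $A \in \cT$ and each $m$, $\Pr(A \mid \cF_m) = \Pr(A)$ would follow if $A$ were independent of $\cF_m$; I establish this by showing $A$ lies (up to null sets) in the tail $\sigma$-algebra of the delayed sequence $(Z_{m+1}, Z_{m+2}, \ldots)$, which is independent of $\cF_m$. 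Granting that, $\Pr(A \mid \cF_m) = \Pr(A)$ for every $m$, and letting $m \to \infty$, Lévy's zero-one law gives $\mathbf 1_A = \Pr(A \mid \cF_\infty) = \Pr(A)$ a.s. (using that $A \in \cT \subseteq \cF_\infty$), so $\Pr(A) \in \{0,1\}$.

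The main obstacle, and the step I would spend the most care on, is the measurability bookkeeping in the previous paragraph: rigorously arguing that because $\cH_{m+k} = S_m + \cH'_k$ for all large $k$ (a.s.), every set in $\cT$ agrees, modulo a null set, with a set in the tail $\sigma$-algebra of $(Z_j)_{j > m}$. The subtlety is that the identity only holds for $k \geq K_m$ with $K_m$ random, and that the translation $S_m$ must be "quotiented out" measurably; I would handle this by working with the support-function representation $h_{\cH_n}(\theta) = \max_{0 \leq k \leq n} S_k \cdot \be_\theta$ (Cauchy/support-function picture already used in the paper), under which translation by $S_m$ is addition of the linear function $\theta \mapsto S_m \cdot \be_\theta$, so that second differences in $n$, or differences of $h_{\cH_{m+k}} - h_{\cH_{m+k-1}}$ across two consecutive large $k$, are manifestly translation-free and delayed-increment-measurable, and then checking that $\cT$ is generated (mod null sets) by such quantities. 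Everything else --- $r_n \to \infty \Rightarrow$ bounded shifts eventually invisible, Kolmogorov triviality of the delayed tail, the final Lévy zero-one law step --- is routine once this is in place.
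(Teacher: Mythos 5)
Your proposal rests on the right geometric observation --- once $r_n\to\infty$, a bounded perturbation of the initial segment eventually becomes invisible to the convex hull --- but it routes this through the wrong zero-one law. You aim to show that every $A\in\cT$ lies, up to a null set, in the Kolmogorov tail $\bigcap_m\sigma(Z_{m+1},Z_{m+2},\ldots)$, and then apply Kolmogorov (or L\'evy). The step where you ``quotient out'' the translation $S_m$ by passing to support-function differences $h_{\cH_{m+k}}-h_{\cH_{m+k-1}}$ does not accomplish this: the $\sigma$-algebra generated by those differences is strictly smaller than $\cT$. Indeed $\cT$ contains events that depend on the absolute position of the hull --- for instance $A=\{(1,0)\in\cH_n\text{ for all large }n\}$ is in $\cT$, yet cannot be decided from increments of the support function alone. (Under $r_n\to\infty$ this particular $A$ has probability $1$, but that is an instance of the theorem you are trying to prove, so it cannot be used to excuse the dependence.) More fundamentally, the identity $\cH_{m+k}=S_m+\cH'_k$ for $k\geq K_m$ shows that the tail of the hull sequence is a function of $S_m$ \emph{and} the delayed increments; nothing in your argument removes the dependence of $\mathbf{1}_A$ on $S_m$, and establishing that removal is exactly the assertion $\Pr(A\mid\cF_m)=\Pr(A)$ that you want. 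The proposal therefore has a genuine gap at the ``measurability bookkeeping'' step that you yourself flagged as the delicate one.

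The paper uses Hewitt--Savage rather than Kolmogorov, and this is not an arbitrary choice: Lemma~\ref{lem:recurrence} shows that for $n\geq N_k$ one has $\cH_n=\hull\{S_{k+1},\ldots,S_n\}$ with $N_k$ invariant under permutations of $Z_1,\ldots,Z_k$, so the tail of the hull sequence depends on the initial block only through $S_k$, which is permutation-invariant but emphatically not constant. Permutation-invariance is precisely the amount of invariance that $r_n\to\infty$ buys; full independence from the initial block, which your Kolmogorov route needs, is false, since the hull really does sit in a different place if you change $S_m$. To rescue your route you would need a separate argument that the ``absolute location'' part of $\cT$ is trivial before the translation-quotient idea can take over, and that extra work is not supplied by the proposal.
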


Next we give a sufficient condition for $r_n \to \infty$.
Recall~\cite[p.~190]{dur} that $S_n$ is \emph{recurrent}
if there is a non-empty set $\cR$ of points $\bx \in \R^2$ (the recurrent values) such that, for any $\eps >0$,
$\| S_n - \bx \| < \eps$ i.o., a.s.

\begin{proposition}
\label{prop:recurrence}
If $S_n$ is genuinely 2-dimensional and recurrent, then $r_n \to \infty$ a.s.
\end{proposition}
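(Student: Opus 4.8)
The plan is to exploit the fact, already noted above, that $r_n$ is non-decreasing: it then suffices to show that for every $R \in \N$ we have, almost surely, $r_n \geq R$ for all sufficiently large $n$. Writing $B_R := \{ \bx \in \R^2 : \| \bx \| \leq R \}$, this is the same as showing that, almost surely, $\cH_n \supseteq B_R$ for all large $n$.

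The first step is to describe the set $\cR$ of recurrent values. It is a standard fact about recurrent random walks (see e.g.~\cite{dur}) that $\cR$ is a closed additive subgroup of $\R^2$ containing the support of the distribution of $Z$. Since $S_n$ is genuinely two-dimensional, this support is not contained in any line, and hence neither is $\cR$; being a closed subgroup of $\R^2$ not lying on a line, $\cR$ therefore contains two linearly independent vectors $\bv_1$ and $\bv_2$, and with them the full-rank lattice $\La := \{ a \bv_1 + b \bv_2 : a, b \in \Z \}$. I expect this to be the only delicate point of the argument — pinning down the structure of $\cR$ and the precise sense in which ``genuinely two-dimensional'' prevents $\cR$ from lying on a line — since everything after it is elementary geometry.

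Next, fix $R \in \N$ and consider the finite set $F := \{ \pm N \bv_1 \pm N \bv_2 \} \subseteq \La \subseteq \cR$, where $N = N(R) \in \N$ is to be chosen. The convex hull $\hull F$ is a parallelogram centred at the origin whose distance to its own boundary grows linearly in $N$ (with a positive constant depending on $\bv_1, \bv_2$), so we may fix $N$ large enough that $\hull F \supseteq B_{R+1}$. Now, by the definition of recurrence, for each $w \in F \subseteq \cR$ we have $\| S_n - w \| < 1$ for infinitely many $n$, almost surely; since $F$ is finite, it follows that a.s.\ there is a finite (random) $N_0$ such that, for every $w \in F$, some $S_k$ with $k \leq N_0$ satisfies $\| S_k - w \| < 1$.

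Finally I would check that for every $n \geq N_0$ the hull $\cH_n$, which contains the convex hull of the points $S_k$ produced above, contains $B_R$. This is a one-line support-function estimate: moving each extreme point of $\hull F$ a distance less than $1$ decreases the support function in every direction by less than $1$, so a convex hull of points each within distance $1$ of a vertex of $F$ still contains $B_R$, given that $\hull F \supseteq B_{R+1}$. Hence $r_n \geq R$ for all $n \geq N_0$, which gives $\liminf_{n\to\infty} r_n \geq R$ a.s.; as $R \in \N$ was arbitrary and $r_n$ is non-decreasing, $r_n \to \infty$ a.s.
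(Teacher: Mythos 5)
Your proof is correct and follows essentially the same strategy as the paper's: locate four spread-out recurrent values around the origin, wait until the walk has visited a small neighbourhood of each (which happens a.s.\ by recurrence), and observe that the convex hull of those four nearby walk positions already traps a ball of the desired radius. The one place where you diverge slightly is in how you pin down the structure of $\cR$: the paper appeals to a structure theorem for closed subgroups of $\R^2$ (Theorem~21.2 of Bhattacharya--Rao) to extract a sublattice $H\Z^2$ and a covering radius $h$, then uses balls of radius $h$ around the generic points $(\pm 2r,\pm 2r)$; you instead argue directly that, since $\cR$ is a closed subgroup containing the support of $Z$ and that support is not confined to a line through the origin (by genuine $2$-dimensionality plus recurrence), $\cR$ contains two linearly independent vectors and hence the full-rank lattice they generate, and you then use balls of radius~$1$ around actual lattice points $\pm N\bv_1\pm N\bv_2$. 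Your route is a little more self-contained and sidesteps the need for the covering-radius constant $h$, at the cost of needing to spell out the elementary group-theoretic step you yourself flagged; both arguments are fine.
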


\begin{remark}
One may also have $r_n \to \infty$ a.s.~in the case of a transient walk, provided
it visits all angles. However, 
$\lim_{n \to \infty} r_n < \infty$ a.s.~may occur
 if the walk has a limiting direction, such as if there is a finite non-zero drift.
\end{remark}

Let  $B(\bx;r)$ denote the closed Euclidean ball centred at $\bx\in \R^2$ with radius $r$.

\begin{proof}[Proof of Proposition~\ref{prop:recurrence}.]
Since $S_n$ is recurrent, the set $\cR$ of recurrent values is a closed subgroup of $\R^2$
and coincides with the set of \emph{possible values} for the walk: see~\cite[p.~190]{dur}.
Since $S_n$ is genuinely 2-dimensional, it follows from e.g.~Theorem 21.2 of~\cite[p.~225]{br} that $\cR$ 
contains a further closed subgroup $\cR'$ of the form $H \Z^2$ where $H$ is a 
non-singular 2 by 2 matrix. 
Hence there exists $h >0$ such that for every $\bx \in \R^2$ there exists $\by \in \cR'$ with $\| \bx - \by \| < h$.
In particular, for any $\bx \in \R^2$, $\Pr (S_n \in B(\bx;h) \io )=1$.

Fix $r > h$, and consider $4$ discs, $D_1,D_2,D_3,D_4$, each of radius $h$, centred at 
$(\pm 2r, \pm 2r )$. Define $T_r$ to be the first time at which the walk has visited all $4$ discs, i.e.,
\[ T_r := \min\{ n \geq 0 : \exists\ i_1,i_2,i_3,i_4 \in [0, n] \text{ with } S_{i_j}\in D_j \text{ for } j =1,2,3,4 \}. \]
The first paragraph of this proof shows that $T_r < \infty$ a.s. By construction, for $n \geq T_r$
we have that $\cH_n$ contains the square $[-r,r]^2$, and so
$n \geq T_r$ implies $r_n \geq r$.
 Hence, 
\[ \Pr \left( \liminf_{m \to \infty} r_m \geq r \right) \geq \Pr ( T_r \leq n) \to 1 ,\]
as $n \to \infty$, and so $\liminf_{n \to \infty} r_n \geq r$, a.s. Since $r > h$
was arbitrary, the result follows.
\end{proof}

The first step in the proof of Theorem~\ref{thm:zero-one} is the following result,
which uses the fact that $r_n \to \infty$ to show that any initial segment
of the trajectory is eventually contained in the interior of the convex hull, uniformly
over permutations of the initial increments.

\begin{lemma}
\label{lem:recurrence}
Suppose that $r_n \to \infty$ a.s.
Let $k \in \N$. Then there exists a random variable
$N_k$ with $\Pr (k < N_k < \infty) =1$ such that (i) $N_k$ is invariant under permutations of
$Z_1, \ldots, Z_k$, and (ii) $\cH_n = \hull \{ S_{k+1}, \ldots, S_n \}$ for all $n \geq N_k$.
\end{lemma}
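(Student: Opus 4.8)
The plan is to exploit the hypothesis $r_n \to \infty$ to make the fixed initial segment $S_0, S_1, \ldots, S_k$ disappear into the interior of $\cH_n$ once $n$ is large. First I would fix $k \in \N$ and set $R_k := \max_{0 \leq j \leq k} \| S_j \| + 1$, a finite random variable that is manifestly invariant under permutations of $Z_1, \ldots, Z_k$ (it depends only on the partial sums $S_0, \ldots, S_k$, but we will want to be careful: the intermediate partial sums do change under permutation, so in fact I would use $R_k := \sum_{j=1}^k \| Z_j \| + 1$, which dominates all permuted partial sums and is symmetric in $Z_1, \ldots, Z_k$). Since $r_n \to \infty$ a.s., define
\[ N_k := \min \{ n > k : r_m > R_k \text{ for all } m \geq n \text{ along the original trajectory} \}, \]
or more simply $N_k := \min\{ n > k : r_{n} > R_k \}$, using monotonicity of $r_n$. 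Then $\Pr(k < N_k < \infty) = 1$.

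The key observation is that $B(\0; R_k) \subseteq \cH_{N_k}$ whenever $r_{N_k} > R_k$, by definition of $r_n$ as the distance from the origin to the complement of the hull; hence every point $S_j$ with $0 \leq j \leq k$ lies in $\cH_{N_k}$, and in fact in its interior since $\| S_j \| < R_k = r_{N_k}$ strictly. Consequently, for any $n \geq N_k$ we have $\{ S_0, \ldots, S_k \} \subseteq \cH_{N_k} \subseteq \cH_n$, so adding these points back changes nothing: $\cH_n = \hull\{ S_0, \ldots, S_n \} = \hull\{ S_{k+1}, \ldots, S_n \}$ (using $S_0 = \0 \in B(\0;R_k) \subseteq \cH_n$ as well). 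This gives property (ii). For the permutation-invariance in (i): the quantities $r_n$ for $n \geq N_k$ — and indeed the sets $\cH_n$ for $n \geq N_k$ — are \emph{not} obviously invariant under permuting $Z_1, \ldots, Z_k$, so one must argue slightly more carefully. The point is that $R_k$ is symmetric in $Z_1, \ldots, Z_k$, and the \emph{condition} defining $N_k$ should be rephrased symmetrically: I would instead take $N_k$ to be the smallest $n > k$ such that $\hull\{ S_0, \ldots, S_n \}$ contains $B(\0; R_k)$ for \emph{every} reordering of the first $k$ increments simultaneously. Since there are only finitely many ($k!$) permutations and each gives a walk with $r_n \to \infty$ (same law, and anyway the tail trajectory $S_{k+1}, S_{k+2}, \ldots$ is literally unchanged by a permutation of $Z_1, \ldots, Z_k$ — the partial sums agree from index $k$ onward), this $N_k$ is finite a.s., manifestly symmetric in $Z_1, \ldots, Z_k$, and still forces $\{S_0, \ldots, S_k\} \subseteq \mathrm{int}\, \cH_n$ for $n \geq N_k$ under the original ordering, which is all (ii) needs.

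The main obstacle is precisely this bookkeeping around permutation-invariance: one must choose $N_k$ so that it does not secretly depend on the order of $Z_1,\ldots,Z_k$, while still controlling the original hull $\cH_n$. The clean way around it is the remark just made — that permuting the first $k$ increments leaves $S_m$ unchanged for all $m \geq k$, so $\cH_m$ for $m \geq k$ is affected only through which finitely many extra points $\{S_1^{\pi}, \ldots, S_{k-1}^{\pi}\}$ get thrown in, all of which have norm at most $R_k$ regardless of $\pi$. Thus the event "$B(\0;R_k) \subseteq \cH_n$" for the un-permuted walk, intersected over all $\pi$, is symmetric, and once it occurs the extra points are irrelevant to the hull anyway. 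Everything else — finiteness of $N_k$, $N_k > k$, the interior containment — is then immediate from $r_n \uparrow \infty$ and the definition of $r_n$.
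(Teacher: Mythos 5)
Your proposal is correct and follows essentially the same approach as the paper: take $R_k := \sum_{j \leq k} \|Z_j\|$ (the paper omits your ``$+1$'', which is harmless but unnecessary), use $r_n \to \infty$ to get a finite $N_k > k$ past which $B(\0;R_k)$, and hence $S_0, \ldots, S_k$, lies in the interior of $\cH_n$, and conclude that from then on the hull is determined by $S_{k+1}, \ldots, S_n$ alone. The only difference is how permutation-invariance of $N_k$ is secured: you redefine $N_k$ by intersecting the absorption event over all $k!$ permutations, which is correct but a detour, whereas the paper stays with the simple definition $N_k = \min\{n > k : r_n > R_k\}$ and observes that $\{r_n > R_k\}$ coincides with $\{r_{n,k} > R_k\}$, where $r_{n,k}$ is the inradius of $\hull\{S_{k+1},\ldots,S_n\}$; the latter event manifestly depends only on $R_k$ (symmetric) and the increments $Z_{k+1}, Z_{k+2}, \ldots$ (untouched by the permutation). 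Your closing observation --- that for $m \geq k$ the walk $S_m$ is unchanged and the thrown-in points all have norm $\leq R_k$ --- is exactly the ingredient needed to see this identity of events directly, so you had the cleaner argument in hand without quite committing to it.
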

\begin{proof}
Fix $k \in \N$.
Let $R_k := \sum_{i=1}^k \|Z_i\|$ and define $N_k := \min \{ n > k : r_n > R_k \}$. Note
that since $r_n$ is non-decreasing, $n \geq N_k$ implies $r_n > R_k$.
Since $R_k < \infty$ a.s.~and $r_n \to \infty$ a.s., we have $N_k < \infty$ a.s.
Observe that if $r_n > R_k$ for $n > k$, then
$S_0 , S_1, \ldots, S_k$ are all contained in the interior
of $\cH_n$, so that $\cH_n = \cH_{n,k} := \hull \{ S_{k+1}, \ldots , S_n \}$.
So statement~(ii) holds. Moreover, if $r_{n,k} := \inf \{ \| \bx \| : \bx \in \R^2 \setminus \cH_{n,k} \}$
we have that $\{ r_n > R_k \} = \{ r_{n,k} > R_k \}$.
But the  events $\{ r_{n,k} > R_k \}$, $n > k$, which determine $N_k$,
depend  only on $R_k$ and $S_{k+1}, S_{k+2}, \ldots$, and so statement~(i) holds.
\end{proof}

Heuristically, Theorem~\ref{thm:zero-one} is true since any $A \in \cT$ is determined
by $\cH_{N_k}, \cH_{N_k+1}, \ldots$, and Lemma~\ref{lem:recurrence} shows that this sequence in invariant under permutations of $Z_1, \ldots, Z_k$,
as required for the Hewitt--Savage zero-one law. The formal proof is as follows.

\begin{proof}[Proof of Theorem~\ref{thm:zero-one}.]
We adapt one of the standard proofs of the Hewitt--Savage zero-one law; see e.g.~\cite[pp.~180--181]{dur}. Let $A \in \cT$ and fix $\eps >0$. Recall a fact from measure theory:
if $\cA$ is an algebra and $A \in \sigma (\cA)$, then we can find $A' \in \cA$
such that $\Pr ( A \sd A' ) < \eps$ (see e.g.~\cite[p.~179]{bill}). Applied to the algebra $\cup_{n \geq 0} \cF_n$ which generates $\cF_\infty \supseteq \cT$, this result
implies that we can find $k \geq 0$ and $A_k \in \cF_k$ such that $\Pr ( A \sd A_k ) < \eps$. 
Fix this $k$, and fix $n$ such that $\Pr ( N_{2k} > n ) < \eps$, where $N_{2k}$ is as given in Lemma~\ref{lem:recurrence}.
Applied to the algebra
$\cA_n := \cup_{m \geq 0} \sigma ( \cH_n, \cH_{n+1}, \ldots, \cH_{n+m} )$, which has $\sigma (\cA_n) \supseteq \cT_n \supseteq \cT$,
the same measure-theoretic result shows that we can find
$E_n \in \cA_n$ such that $\Pr ( A \sd E_n ) < \eps$.

Now $A_k \in \cF_k$ can be expressed as $A_k = \{ Z_1 \in C_{k,1}, \ldots, Z_k \in C_{k,k} \}$
for Borel sets $C_{k,1}, \ldots, C_{k,k}$.
Set $A'_k := \{ Z_{k+1} \in C_{k,1}, \ldots, Z_{2k} \in C_{k,k} \}$;
since the $Z_i$ are i.i.d., $\Pr (A'_k) = \Pr (A_k)$, and $A_k$ and $A_k'$ are independent.
We claim that 
\begin{equation}
\label{eqn:zeroone2}
\Pr ( ( A'_k \sd E_n ) \cap \{ N_{2k} \leq n \} ) = \Pr ( ( A_k \sd E_n ) \cap \{ N_{2k} \leq n \} ) \leq 2\eps .\end{equation}
To see the equality in~\eqref{eqn:zeroone2}, observe that Lemma~\ref{lem:recurrence} shows that $E_n \cap \{ N_{2k} \leq n \}$ is invariant under permutations of $Z_1, \ldots, Z_{2k}$,
and the $Z_i$ are i.i.d. For the inequality in~\eqref{eqn:zeroone2}, we use the 
fact that $\Pr ( A \sd B ) \leq \Pr ( A \sd C ) + \Pr ( B \sd C)$ to get
\begin{align*}
\Pr ( ( A_k \sd E_n ) \cap  \{ N_{2k} \leq n \} ) & \leq \Pr ( A_k \sd E_n ) \\
& \leq \Pr (A_k \sd A ) + \Pr (E_n \sd A ) \leq 2 \eps .\end{align*}
Hence the claim~\eqref{eqn:zeroone2} is verified.
Since $\Pr ( (A \sd B) \cap D ) \leq \Pr ( (A \sd C) \cap D ) + \Pr (   B \sd C  )$,
we also get that
\begin{align*}
\Pr ( ( A \sd A_k' ) \cap \{ N_{2k} \leq n \} ) \leq 
\Pr ( ( A'_k \sd E_n ) \cap \{ N_{2k} \leq n \} ) + \Pr (   A \sd E_n )    
\leq 3\eps,
\end{align*}
by~\eqref{eqn:zeroone2}. Hence
\begin{align*}
\Pr( A \sd A'_k) & \leq \Pr ( N_{2k} > n) + \Pr ( (A \sd A_k' ) \cap \{ N_{2k} \leq n \} ) 
\leq 4\eps .\end{align*}
The final sequence of the proof is  a variation on the standard argument. First note that 
\begin{align}
|\Pr(A)^2-\Pr(A)|  \leq |\Pr(A)^2 - \Pr(A_k \cap A'_k)| + |\Pr(A_k \cap A'_k) - \Pr(A)|. 
\label{eqn:zeroone1}
\end{align}
For the first term on the right-hand side of~\eqref{eqn:zeroone1}, we use the fact that $A_k$ and $A_k'$ are independent with $\Pr(A_k)= \Pr(A'_k)$,
 along with the property of the symmetric difference operator that $|\Pr(A)-\Pr(B)|\leq \Pr(A\sd B)$, to get
\begin{align*}
|\Pr(A)^2-\Pr(A_k \cap A'_k)|  & =   |\Pr(A)^2-\Pr(A_k)^2|\\
& \leq |\Pr(A)+\Pr(A_k)||\Pr(A)-\Pr(A_k)|\\
&\leq 2 \Pr(A \sd A_k) \leq 2\eps.
\end{align*}
Now considering the second term on the right-hand side of~\eqref{eqn:zeroone1} and using the fact that $\Pr(A \sd (B \cap C)) \leq \Pr(A \sd B) + \Pr(A \sd C)$, we have
\begin{align*}
|\Pr(A_k \cap A'_k) - \Pr(A)| &  \leq \Pr(A \sd (A_k \cap A'_k))\\
&\leq \Pr(A \sd A_k) + \Pr( A \sd A'_k) \leq 5 \eps.
\end{align*}
Combining these two bounds, we obtain from~\eqref{eqn:zeroone1} that $|\Pr(A)^2-\Pr(A)| \leq 7 \eps$.
Since $\eps>0$ was arbitrary, we get the result.
\end{proof}

The strategy of the proof of Theorem~\ref{thm:shape}, carried out in the remainder
of this section, is as follows. We use Donsker's theorem and the mapping theorem
to show that $D_n^{-1} \cH_n$ converges weakly to the convex hull of an appropriate Brownian motion, scaled to have unit diameter (Lemma~\ref{lem:shape-weak}).
This limiting set has positive probability of being an arbitrarily good approximation to any given unit-diameter convex compact set $K$.
An application of the zero-one law (Theorem~\ref{thm:zero-one}) then completes the proof.

For $K \in \cK$ let $\cD ( K) := \diam K$.
The next result shows that 
the map
 $K \mapsto \cD(K)$ is continuous 
 from $(\cK ,\rho_H)$ to $(\R_+,\rho_1)$.

\begin{lemma}\label{lem:D-cont}
For $K_1, K_2 \in \cK$, $| \cD ( K_1) - \cD (K_2 ) | \leq 2 \rho_H (K_1, K_2 )$.
\end{lemma}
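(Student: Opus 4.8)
The plan is to prove the Lipschitz bound $|\cD(K_1) - \cD(K_2)| \leq 2\rho_H(K_1,K_2)$ directly from the definition of the Hausdorff metric. Write $\eps := \rho_H(K_1,K_2)$, so that $K_1 \subseteq K_2^\eps$ and $K_2 \subseteq K_1^\eps$. By symmetry it suffices to show $\cD(K_1) \leq \cD(K_2) + 2\eps$.

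First I would pick points $\bx, \by \in K_1$ that realise the diameter, i.e.\ $\| \bx - \by \| = \cD(K_1)$ (such points exist by compactness of $K_1$). Since $K_1 \subseteq K_2^\eps$, there exist $\bx', \by' \in K_2$ with $\| \bx - \bx' \| \leq \eps$ and $\| \by - \by' \| \leq \eps$. Then, by the triangle inequality,
\[
\cD(K_1) = \| \bx - \by \| \leq \| \bx - \bx' \| + \| \bx' - \by' \| + \| \by' - \by \| \leq \eps + \cD(K_2) + \eps = \cD(K_2) + 2\eps ,
\]
using $\| \bx' - \by' \| \leq \diam K_2 = \cD(K_2)$ since $\bx', \by' \in K_2$. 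Swapping the roles of $K_1$ and $K_2$ gives $\cD(K_2) \leq \cD(K_1) + 2\eps$, and the two inequalities together yield $|\cD(K_1) - \cD(K_2)| \leq 2\eps = 2\rho_H(K_1,K_2)$.

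There is no real obstacle here — the only point requiring mild care is that the infimum in the definition~\eqref{eq:rhoH} of $\rho_H$ is attained (or, if one prefers not to argue attainment, one works with $\eps' > \rho_H(K_1,K_2)$ arbitrary and lets $\eps' \downarrow \rho_H(K_1,K_2)$ at the end). Since $K_1, K_2$ are compact, the sets $K_i^\eps$ are closed, and the containments $K_1 \subseteq K_2^\eps$, $K_2 \subseteq K_1^\eps$ hold at the infimal $\eps$ itself; alternatively the $\eps' \downarrow$ argument avoids this entirely. The stated continuity of $K \mapsto \cD(K)$ from $(\cK,\rho_H)$ to $(\RP,\rho_1)$ is then immediate from the Lipschitz estimate.
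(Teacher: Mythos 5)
Your proof is correct and follows essentially the same triangle-inequality argument as the paper; the only cosmetic difference is that you fix points realising $\cD(K_1)$ while the paper bounds $\|\bx_1-\bx_2\|$ for arbitrary $\bx_1,\bx_2\in K_1$ and then takes the supremum, and both handle the infimum in the definition of $\rho_H$ the same way (working with $s>\rho_H(K_1,K_2)$ and letting $s$ decrease).
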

\begin{proof}
Let $\rho_H ( K_1, K_2 ) = r$. From~\eqref{eq:rhoH} we have that for any $\bx_1, \bx_2 \in K_1$,
there exist $\by_1, \by_2 \in K_2$ such that $\| \bx_i - \by_i \| \leq s$ for any $s > r$. Then,
\[ \| \bx_1 - \bx_2 \| \leq \| \bx_1 - \by_1 \| + \| \by_1 - \by_2 \| + \| \by_2 - \bx_2 \| \leq 2 s + \cD ( K_2 ) .\]
Hence
$\cD(K_1)  \leq 2s + \cD( K_2)$, and since $s >r$ was arbitrary we get $\cD( K_1) - \cD( K_2) \leq 2r$.
A symmetric argument gives  $\cD( K_2) - \cD( K_1) \leq 2r$.
\end{proof}

For $K \in \cK$ and $\bx \in \Sp := \{ \by \in \R^2 : \| \by \| = 1 \}$, define
 $h_K (\bx) := \sup_{\by \in K} (\by \cdot \bx)$. Equivalent to~\eqref{eq:rhoH} for $K_1, K_2 \in \cK$ is the formula~\cite[p.~84]{gruber}
\begin{equation}
\label{eq:rhoH2}
 \rho_H ( K_1, K_2 ) = \sup_{\bx \in \Sp} | h_{K_1} (\bx) - h_{K_2} (\bx) | .\end{equation}
Let $\cK^\star := \{ K \in \cK : \cD ( K) > 0 \} = \cK \setminus \{ \{ \0 \} \}$.

\begin{lemma}
\label{lem:scale-cont}
Suppose that $K_1, K_2 \in \cK^\star$. Then
\begin{equation}
\label{eq:scale-cont} \rho_H ( K_1 / \cD (K_1) , K_2 / \cD (K_2 )) \leq \frac{3 \rho_H (K_1, K_2)}{\cD (K_1 ) } .\end{equation}
In particular, the map $K \mapsto K/\cD(K)$ is   continuous   from $(\cK^\star ,\rho_H)$ to $(\cK^\star ,\rho_H)$. 
\end{lemma}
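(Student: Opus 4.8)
The plan is to reduce everything to the support-function representation~\eqref{eq:rhoH2} of the Hausdorff metric, after which the claim becomes a one-line computation with $1/x$. Abbreviate $d_i := \cD(K_i)$ for $i=1,2$; these are strictly positive since $K_i \in \cK^\star$. Because the support function is positively homogeneous in its set argument, $h_{K_i/d_i}(\bx) = h_{K_i}(\bx)/d_i$ for every $\bx \in \Sp$, so~\eqref{eq:rhoH2} gives
\[ \rho_H ( K_1/d_1, K_2/d_2 ) = \sup_{\bx \in \Sp} \left| \frac{h_{K_1}(\bx)}{d_1} - \frac{h_{K_2}(\bx)}{d_2} \right| . \]

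Next I would bound the integrand by inserting the intermediate term $h_{K_2}(\bx)/d_1$ and using the triangle inequality:
\[ \left| \frac{h_{K_1}(\bx)}{d_1} - \frac{h_{K_2}(\bx)}{d_2} \right| \leq \frac{| h_{K_1}(\bx) - h_{K_2}(\bx) |}{d_1} + h_{K_2}(\bx) \cdot \frac{| d_1 - d_2 |}{d_1 d_2} . \]
The first term on the right is at most $\rho_H(K_1,K_2)/d_1$, again by~\eqref{eq:rhoH2}. For the second term, the key observation is that since $\0 \in K_2$ we have $0 \leq h_{K_2}(\bx) = \sup_{\by \in K_2} (\by \cdot \bx) \leq \sup_{\by \in K_2} \| \by \| \leq \diam K_2 = d_2$ (the middle inequality because $\| \bx \| = 1$, the last because $\0 \in K_2$), so $h_{K_2}(\bx)/d_2 \leq 1$; moreover $|d_1 - d_2| \leq 2 \rho_H(K_1,K_2)$ by Lemma~\ref{lem:D-cont}. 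Hence the second term is at most $2\rho_H(K_1,K_2)/d_1$. Adding the two bounds and taking the supremum over $\bx \in \Sp$ yields~\eqref{eq:scale-cont}.

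For the continuity assertion, suppose $K^{(m)} \to K$ in $(\cK^\star, \rho_H)$. By Lemma~\ref{lem:D-cont}, $\cD(K^{(m)}) \to \cD(K) > 0$, so there is $m_0$ with $\cD(K^{(m)}) \geq \cD(K)/2$ for all $m \geq m_0$. Applying~\eqref{eq:scale-cont} with $K_1 = K^{(m)}$ and $K_2 = K$ then gives, for $m \geq m_0$,
\[ \rho_H \!\left( \frac{K^{(m)}}{\cD(K^{(m)})}, \frac{K}{\cD(K)} \right) \leq \frac{3 \rho_H(K^{(m)}, K)}{\cD(K^{(m)})} \leq \frac{6 \rho_H(K^{(m)}, K)}{\cD(K)} \longrightarrow 0 , \]
which is the desired continuity (the target $K/\cD(K)$ lies in $\cK^\star$ since it contains $\0$ and has diameter $1$).

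I do not expect a genuine obstacle here; the only step that is not purely formal is the estimate $h_{K_2}(\bx) \leq d_2$, which relies on the standing convention built into $\cK$ that every set contains the origin — this is exactly what lets the diameter control the norms of points of $K_2$, and also what ensures $h_{K_2} \geq 0$ on $\Sp$, so that no absolute value is wasted in the second term above.
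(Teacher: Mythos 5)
Your proof is correct and follows essentially the same route as the paper: both compute $\rho_H(K_1/d_1,K_2/d_2)$ via the support-function formula~\eqref{eq:rhoH2}, insert the same intermediate term $h_{K_2}(\bx)/d_1$, use the bound $\sup_{\bx\in\Sp}h_{K_2}(\bx)\leq \cD(K_2)$ (which the paper leaves implicit but you rightly justify from $\0\in K_2$), and finish with Lemma~\ref{lem:D-cont}. The only cosmetic difference is that the paper packages the triangle-inequality step as a standalone scaling inequality~\eqref{eq:rhoH-scale} before specializing $\alpha_i=1/\cD(K_i)$, whereas you inline it.
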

\begin{proof}
We first claim that for $K_1, K_2 \in \cK$ and $\alpha_1, \alpha_2 >0$,
\begin{equation}
\label{eq:rhoH-scale}
\rho_H ( \alpha_1 K_1 , \alpha_2 K_2 ) \leq \alpha_1 \rho_H ( K_1, K_2 ) + | \alpha_1 - \alpha_2 | \cD ( K_2 ) .\end{equation}
Suppose that $K_1, K_2 \in \cK^\star$. 
Applying~\eqref{eq:rhoH-scale} with $\alpha_i = 1/\cD (K_i)$, we get
\begin{align*}
\rho_H ( K_1 / \cD (K_1) , K_2 / \cD (K_2 )) & \leq \frac{\rho_H ( K_1, K_2)}{\cD (K_1)} + \frac{| \cD ( K_1 ) - \cD (K_2)|}{\cD ( K_1 ) } 
,\end{align*}
from which~\eqref{eq:scale-cont} follows
by Lemma~\ref{lem:D-cont}.  This gives the desired continuity.

It remains to verify the claim~\eqref{eq:rhoH-scale}. From~\eqref{eq:rhoH2}, 
with the observation that, for $\alpha >0$,
 $h_{\alpha K} (\bx) = \alpha h_K (\bx)$, it follows that
\begin{align*} \rho_H ( \alpha_1 K_1 , \alpha_2 K_2 ) & = \sup_{\bx \in \Sp} | \alpha_1 h_{K_1} (\bx) - \alpha_1 h_{K_2} (\bx) + (\alpha_1 - \alpha_2) h_{K_2} (\bx) |\\
& \leq \alpha_1 \sup_{\bx \in \Sp} | h_{K_1} (\bx) - h_{K_2} (\bx) | + | \alpha_1 - \alpha_2 | \sup_{\bx \in \Sp} h_{K_2} (\bx) ,\end{align*}
from which the claim~\eqref{eq:rhoH-scale} follows.
\end{proof}

Suppose that $\Sigma := \Exp ( Z Z^\tra )$ is positive definite.
Let $(b(t), t \geq 0)$ be standard Brownian motion in $\R^2$.
Let $h_1 := \hull b [0,1]$, the convex hull of   Brownian motion run for unit time.
Let $\Sigma^{1/2}$ denote the (unique) positive-definite symmetric matrix such that
$\Sigma^{1/2} \Sigma^{1/2} = \Sigma$.
The map $\bx \mapsto \Sigma^{1/2} \bx$ is an affine transformation of $\R^2$, such that 
$\Sigma^{1/2} b$ is Brownian motion with covariance matrix $\Sigma$,
and $\Sigma^{1/2} h_1 = \hull \Sigma^{1/2} b [0,1]$ is the corresponding convex hull.

\begin{lemma}\label{lem:shape-weak}
Suppose that $\Exp ( \| Z \|^2 ) < \infty$, $\mu = \0$, and $\Sigma$ is positive definite.
Then
\[D_n^{-1}\cH_n \Rightarrow \frac{\Sigma^{1/2}h_1}{\cD(\Sigma^{1/2}h_1)},\]
in the sense of weak convergence on $(\cK, \rho_H)$.
\end{lemma}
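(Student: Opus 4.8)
The plan is to derive the claim from Donsker's invariance principle via two applications of the continuous mapping theorem: one for the convex-hull map, turning the functional limit for the rescaled walk path into a limit for $n^{-1/2}\cH_n$, and one for the rescaling map $K \mapsto K/\cD(K)$ furnished by Lemma~\ref{lem:scale-cont}.

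First I would set up the functional limit. Since $\Exp(\|Z\|^2) < \infty$ and $\mu = \0$, let $\tilde W_n$ be the random element of $C([0,1];\R^2)$ obtained by linearly interpolating the points $n^{-1/2}S_k$, $0 \le k \le n$, placed at the nodes $k/n$. The multivariate form of Donsker's theorem (see e.g.~\cite{bill} or~\cite[Ch.~4]{dur}) gives $\tilde W_n \Rightarrow \Sigma^{1/2} b$ as $n \to \infty$, weakly on $\bigl(C([0,1];\R^2), \|\cdot\|_\infty\bigr)$, since the limit has covariance $t\Sigma = t\,\Exp(Z Z^\tra)$ at time $t$, matching the increments of the walk.

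Next I would use that the convex-hull map $\Phi\colon C([0,1];\R^2) \to \cK$, $\Phi(w) := \hull(w[0,1])$, is $1$-Lipschitz: $\rho_H(\Phi(w_1), \Phi(w_2)) \le \|w_1 - w_2\|_\infty$. This follows from $\rho_H(\hull A, \hull B) \le \rho_H(A, B)$ for compact sets $A, B$ (which one gets from~\eqref{eq:rhoH2} and $h_{\hull A} = h_A$, together with the elementary identity $(\hull A)^\eps = \hull(A^\eps)$) and from $\rho_H(w_1[0,1], w_2[0,1]) \le \|w_1 - w_2\|_\infty$. Since the convex hull of a piecewise-linear path equals the convex hull of its breakpoints, $\Phi(\tilde W_n) = n^{-1/2}\cH_n$. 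The continuous mapping theorem then yields $n^{-1/2}\cH_n \Rightarrow \Sigma^{1/2}h_1$ weakly on $(\cK, \rho_H)$. I would then apply the continuous mapping theorem a second time to the map $g\colon \cK^\star \to \cK^\star$, $g(K) := K/\cD(K)$, which is continuous by Lemma~\ref{lem:scale-cont} (extend $g$ measurably to all of $\cK$, say by $g(\{\0\}) := \{\0\}$; its only possible discontinuity is at $\{\0\}$). To invoke the CMT I need the limit to lie in $\cK^\star$ a.s., i.e.~$\cD(\Sigma^{1/2}h_1) > 0$ a.s.; this holds because $\Sigma$ is positive definite, so $\Sigma^{1/2}b$ is genuinely two-dimensional, hence a.s.~non-constant (indeed its hull has positive area a.s.). Therefore $g(n^{-1/2}\cH_n) \Rightarrow g(\Sigma^{1/2}h_1)$. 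Finally, $g$ is scale-invariant and $\cD(n^{-1/2}\cH_n) = n^{-1/2}D_n$, so for all large $n$ (where $D_n > 0$) we have $g(n^{-1/2}\cH_n) = (n^{-1/2}\cH_n)/(n^{-1/2}D_n) = D_n^{-1}\cH_n$, which gives the asserted weak limit.

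The argument is a chaining of standard ingredients, so there is no single deep obstacle; the points needing the most care are (a) the continuity of $\Phi$ together with the identification $\Phi(\tilde W_n) = n^{-1/2}\cH_n$, and (b) checking that $\Sigma^{1/2}h_1 \in \cK^\star$ almost surely so that the discontinuity of the rescaling map at $\{\0\}$ is immaterial for the CMT. Both are handled as indicated: (a) is the observation that the hull of a path is the hull of its breakpoints, plus the $1$-Lipschitz estimate for the hull map, and (b) follows from positive definiteness of $\Sigma$.
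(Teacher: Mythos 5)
Your proof is correct and follows essentially the same route as the paper: the paper cites Theorem~2.5 of~\cite{wx2} for the weak convergence $n^{-1/2}\cH_n \Rightarrow \Sigma^{1/2}h_1$ and then applies the mapping theorem using the continuity of $K \mapsto K/\cD(K)$ from Lemma~\ref{lem:scale-cont}, exactly as you do in your second step. The only difference is that you re-derive the cited convergence directly from Donsker's theorem and the Lipschitz continuity of the convex-hull map, which is in fact how that result is established in~\cite{wx2}, so nothing genuinely new is involved.
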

\begin{proof}
The convergence $n^{-1/2} \cH_n \Rightarrow \Sigma^{1/2} h_1$ is given in Theorem~2.5 of~\cite{wx2}.
Since (by Lemma~\ref{lem:scale-cont}) $K \mapsto K/\cD(K)$ is continuous on $\cK^\star$,
and $\Pr ( \Sigma^{1/2} h_1 \in \cK^\star ) = 1$, we may apply the mapping theorem~\cite[p.~21]{billc} to deduce the result.
\end{proof}

\begin{proof}[Proof of Theorem~\ref{thm:shape}]
Fix $K \in \cK$ with $\cD ( K) =1$.
We claim that, for any $\eps >0$,
\begin{equation}
\label{shape-claim}
 \Pr \left( \liminf_{n \to \infty} \rho_H \left(D_n^{-1}\cH_n,K\right) \leq \eps \right) >0. \end{equation}
Under the conditions of the theorem, $S_n$ is genuinely 2-dimensional and recurrent~\cite[p.~195]{dur}, and so, by Proposition~\ref{prop:recurrence}, $r_n \to \infty$ a.s.
Since the event in~\eqref{shape-claim} is in $\cT$, the zero-one law (Theorem~\ref{thm:zero-one})
shows that the probability in~\eqref{shape-claim} must be equal to 1. Since $\eps >0$ was arbitrary, the statement of the theorem follows.

Thus it remains to prove the claim~\eqref{shape-claim}. To this end, 
observe that, for any  $\eps >0$,
\begin{align*}
\Pr \left( \liminf_{n \to \infty} \rho_H \left(D_n^{-1}\cH_n,K\right) \leq \eps \right) & \geq \Pr\left(\rho_H\left(D_n^{-1}\cH_n,K\right)<\eps \io \right) \\
&=\Pr\left( \bigcap_{n=1}^\infty \bigcup_{m\geq n} \left\{\rho_H (D_m^{-1}\cH_m,K)<\eps  \right\} \right)\\
&=\lim_{n\to\infty} \Pr \left( \bigcup_{m\geq n} \left\{ \rho_H (D_m^{-1}\cH_m,K)<\eps \right\}\right)\\
&\geq \lim_{n\to \infty} \Pr \left(\rho_H(D_n^{-1}\cH_n,K)<\eps \right).\end{align*}
By the triangle inequality, $| \rho_H ( K , K_1 ) - \rho_H (K , K_2) | \leq \rho_H ( K_1, K_2)$, i.e.,
for fixed $K$,
the function $K_1 \mapsto \rho_H (K, K_1)$ is continuous. Thus by Lemma~\ref{lem:shape-weak}
and the mapping theorem 
\begin{equation}
\label{eq1}
 \lim_{n\to \infty} \Pr \left(\rho_H(D_n^{-1}\cH_n,K)<\eps \right) = 
\Pr \left( \rho_H \left( \frac{\Sigma^{1/2}h_1}{\cD(\Sigma^{1/2}h_1)},K\right) < \eps  \right).
\end{equation}
Let $\delta \in (0, \eps/6)$.
For convenience, set $A = \Sigma^{1/2} h_1$. 
First suppose that $\0$ is in the interior of $K$.
Then, 
it is not hard to see that $K \subseteq A \subseteq (1+\delta ) K$ occurs with positive probability
(one can force the Brownian motion to make a `loop' in $( (1+\delta) K ) \setminus K$). On this event,
we have $h_K (\bx) \leq h_A (\bx) \leq (1+\delta) h_K (\bx)$ for all $\bx \in \Sp$, so that, by~\eqref{eq:rhoH2},
\[ \rho_H (A, K) = \sup_{\bx \in \Sp} | h_A (\bx) - h_K (\bx) | \leq \delta \sup_{\bx \in \Sp} h_K (\bx) \leq \delta \cD (K) = \delta .\]
It follows from taking $K_1 = K$ and $K_2 = A$ in~\eqref{eq:scale-cont}  that
\[ \rho_H ( A / \cD (A) , K ) \leq 3 \rho_H ( A, K ) \leq 3 \delta < \eps/2 .\]
If $\0$ is not in the interior of $K$, then we can find $K' \in \cK$ with $K \subset K'$ such that $\0$
is in the interior of $K'$ and $\rho_H (K, K') < \eps /2$. Then
\[ \rho_H ( A / \cD (A), K) \leq \rho_H (A / \cD (A), K') + \rho_H (K, K') < \eps ,\]
on the event $K' \subseteq A \subseteq (1+\delta ) K'$, which has positive probability.
Hence, in either case, the probability on the right-hand side of~\eqref{eq1} is strictly positive,
establishing~\eqref{shape-claim}.
\end{proof}

\begin{proof}[Proof of Corollary~\ref{cor:infsupLD}]
For $K \in \cK$, let $\cL (K)$ denote the perimeter length of $K$; then, 
Lemma~2.4 of~\cite{wx2} shows that
\begin{equation}
\label{eq:L-cont}
| \cL (K_1 ) - \cL (K_2) | \leq 2 \pi \rho_H (K_1, K_2 ), \text{ for any } K_1, K_2 \in \cK. 
\end{equation}
First, take $K$ to be a  unit-length line segment in $\R^2$ containing $\0$.
Theorem~\ref{thm:shape} shows that, for any $\eps > 0$,
$\rho_H ( D_n^{-1} \cH_n , K ) < \eps$ i.o., a.s. Hence, by~\eqref{eq:L-cont},
\[ L_n / D_n = \cL ( D_n^{-1} \cH_n ) \leq \cL ( K ) + 2 \pi \eps , \io, \]
and $\cL (K ) = 2$. Since $\eps >0$ was arbitrary, we get  $\liminf_{n \to \infty} L_n /D_n \leq 2$,
and  the first inequality in~\eqref{l-d-ineq} shows that this latter inequality is in fact an equality.

Now take $K$ to be a unit-diameter disc in $\R^2$ containing $\0$. Again, Theorem~\ref{thm:shape} shows that, for any $\eps > 0$,
$\rho_H ( D_n^{-1} \cH_n , K ) < \eps$ i.o., a.s.  Hence, by~\eqref{eq:L-cont},
\[ L_n / D_n = \cL ( D_n^{-1} \cH_n ) \geq \cL ( K ) - 2 \pi \eps , \io, \]
and since now $\cL (K) = \pi$ we get $\limsup_{n \to \infty} L_n /D_n \geq \pi$,
which combined with the second  inequality in~\eqref{l-d-ineq} completes the proof.
\end{proof}

\section{Perimeter in the case with drift}
\label{sec:perim-drift}

Suppose that $\Exp ( \| Z\|^2 )< \infty$ and $\mu \neq \0$.
We work towards the proof of Theorem~\ref{thm:drift-mean}.
Write $X_n := S_n \cdot \hat \mu$ and $Y_n := S_n \cdot \hat \mu_\per$, where $\hat \mu_\per$
is any fixed unit vector orthogonal to $\mu$. 
Then $X_n$ and $Y_n$ are one-dimensional random walks with increment
distributions $Z \cdot \hat \mu$ and $Z \cdot \hat \mu_\per$ respectively;
note that $\Exp ( Z \cdot \hat \mu ) = \| \mu \|$, $\Exp ( Z \cdot \hat \mu_\per ) = 0$, $\Var ( Z \cdot \hat \mu ) = \spara$,  and 
\begin{align*} \Var ( Z \cdot \hat\mu_\per ) = \Exp [ ( ( Z -\mu) \cdot \hat \mu_\per )^2 ] & 
 = \Exp [ \| Z - \mu \|^2 ] - \Exp [ ( ( Z-\mu ) \cdot \hat \mu )^2 ] \\
& = \sigma^2 - \spara = \sperp .\end{align*}
The first step towards the proof of Theorem~\ref{thm:drift-mean} is the following result.

\begin{lemma}
\label{lem:ui}
Suppose that $\Exp ( \| Z\|^2 )< \infty$ and $\mu \neq \0$. Then
$\|S_n\| - |S_n \cdot \hat \mu|$ is uniformly integrable.
\end{lemma}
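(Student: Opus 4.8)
The plan is to control $0 \le \|S_n\| - |S_n \cdot \hat\mu| \le |S_n \cdot \hat\mu_\per|$ and show the right-hand side is itself uniformly integrable, with the key quantitative input being that $Y_n = S_n \cdot \hat\mu_\per$ has a linearly growing variance while $X_n = S_n \cdot \hat\mu$ has a linearly growing mean, so the projection orthogonal to the drift is only of order $\sqrt n$ and is stochastically dwiped out by the good $L^2$ bound. The elementary geometric fact is that $\|S_n\| = \sqrt{X_n^2 + Y_n^2} \le |X_n| + |Y_n|$, whence $0 \le \|S_n\| - |X_n| \le |Y_n|$; so it suffices to prove that the family $\{ |Y_n| : n \ge 1 \}$ is uniformly integrable. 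But $\Exp (Y_n^2) = n \sperp$ by the variance computation just displayed before the lemma (using $\mu \neq \0$ to make $\hat\mu_\per$ meaningful and $\Exp(\|Z\|^2) < \infty$ to make $\sperp$ finite), so in fact $\Exp( Y_n^2 ) / n$ is bounded — which does NOT immediately give uniform integrability of $|Y_n|$ since the second moments are not uniformly bounded. So the argument must be slightly more careful.

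The fix is to work with the \emph{centred and rescaled} quantities. Sharper: I would instead bound $\|S_n\| - |X_n|$ more tightly. Using $\sqrt{a+b} \le \sqrt a + \frac{b}{2\sqrt a}$ for $a > 0$, $b \ge 0$, applied with $a = X_n^2$ and $b = Y_n^2$, one gets, on the event $X_n \neq 0$,
\[
0 \le \|S_n\| - |X_n| = \sqrt{X_n^2 + Y_n^2} - |X_n| \le \frac{Y_n^2}{2 |X_n|}.
\]
Now by the strong law $|X_n| \ge \tfrac12 \| \mu \| n$ for all large $n$, a.s.; more usefully, $X_n / n \to \| \mu \| > 0$ in probability together with $Y_n^2 / n$ being $L^1$-bounded suggests that $Y_n^2 / |X_n|$ is of order $1$ and, crucially, tight. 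To upgrade tightness to uniform integrability, I would write, for a threshold $\delta \| \mu \| n$ with $\delta \in (0,1)$,
\[
\frac{Y_n^2}{2|X_n|} \le \frac{Y_n^2}{2 \delta \| \mu \| n} + \frac{\|S_n\|}{} \cdot \1{ |X_n| < \delta \| \mu \| n },
\]
handling the first term via the $L^2$ control of $Y_n$ (bounded mean, and uniform integrability of $Y_n^2/n$ follows from $\Exp(Y_n^2/n) = \sperp$ constant once one notes $Y_n^2/n$ is a reverse-martingale-like average — or more simply, $Y_n / \sqrt n$ converges in distribution with convergence of second moments, hence $\{Y_n^2/n\}$ is UI) and the second term via the fact that $\Pr( |X_n| < \delta \| \mu\| n ) \to 0$ exponentially (or at least summably fast, by a moment bound) while $\|S_n\|$ has a moment bound of order $n$, so the product has expectation $\to 0$ uniformly enough.

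Cleaner than the above patchwork: I expect the intended argument is to invoke the criterion that a family $\{W_n\}$ of nonnegative random variables is uniformly integrable if $\sup_n \Exp W_n < \infty$ and $W_n \to W$ in distribution with $\Exp W_n \to \Exp W$. Here $W_n := \|S_n\| - |X_n|$; one shows $\Exp(\|S_n\| - |X_n|) = \Exp \|S_n\| - \Exp|X_n|$ converges to a finite limit (this is exactly the kind of computation feeding into Theorem~\ref{thm:drift-mean}: $\Exp\|S_n\| = \|\mu\| n + O(\log n)$ or similar, and $\Exp |X_n| = \|\mu\| n + O(1)$, so the difference is $O(\log n)$ — wait, that diverges). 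The correct statement must be that $\Exp(\|S_n\| - |X_n|)$ stays \emph{bounded}: indeed $\|S_n\| - |X_n| \le Y_n^2/(2|X_n|)$ gives $\Exp(\|S_n\|-|X_n|) \lesssim \Exp(Y_n^2)/n \asymp \sperp$, bounded. Then one identifies the distributional limit of $\|S_n\| - |X_n|$ — by the CLT $Y_n/\sqrt n \to \sqrt{\sperp}\,\zeta$ and $X_n/n \to \|\mu\|$, so $\|S_n\| - |X_n| \approx Y_n^2/(2|X_n|) \to \sperp \zeta^2 / (2\|\mu\|)$ in distribution — and checks $\Exp(\|S_n\|-|X_n|) \to \sperp/(2\|\mu\|)$, which matches $\Exp[\sperp\zeta^2/(2\|\mu\|)] = \sperp/(2\|\mu\|)$. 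Convergence in distribution together with convergence of (finite, matching) first moments of nonnegative variables then yields uniform integrability.

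\textbf{Main obstacle.} The delicate point is justifying the convergence of expectations $\Exp(\|S_n\| - |X_n|) \to \sperp/(2\|\mu\|)$ rigorously: the pointwise/distributional convergence $\|S_n\| - |X_n| \to \sperp\zeta^2/(2\|\mu\|)$ is straightforward from the CLT plus continuous mapping, but to move the limit inside the expectation one needs precisely a uniform integrability statement for $\|S_n\| - |X_n|$ — which is what we are trying to prove, so there is a potential circularity. The way out is to get the convergence of means by a direct estimate rather than by UI: combine the two-sided bound $0 \le \|S_n\| - |X_n| \le \min\{ |Y_n|, \, Y_n^2/(2|X_n|) \}$ with a careful truncation on $\{|X_n| \ge \delta \|\mu\| n\}$ (whose complement has summable probability under the second-moment hypothesis, controlled against the $L^2$ bound on $\|S_n\|$), and then use that $Y_n^2 / (2\delta\|\mu\| n) \to \zeta^2 \sperp/(2\delta\|\mu\|)$ with $\{Y_n^2/n\}$ UI (genuine, since $Y_n/\sqrt n$ is asymptotically normal with converging second moments) to pass to the limit, finally letting $\delta \uparrow 1$. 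This truncation-plus-$L^2$ bookkeeping is the technical heart; everything else is the geometric inequality $\sqrt{a^2+b^2} - |a| \le b^2/(2|a|)$ and the elementary moment facts already assembled in the paragraph preceding the lemma.
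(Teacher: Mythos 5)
Your identification of the two main ingredients is correct and matches the paper: the geometric bound $T_n := \|S_n\| - |X_n| = Y_n^2/(\|S_n\|+|X_n|) \leq Y_n^2/(2|X_n|)$ coming from $\sqrt{a^2+b^2}-|a| \le b^2/(2|a|)$, and the uniform integrability of $n^{-1}Y_n^2$ (which the paper obtains from the Kallenberg/Scheff\'e criterion applied to $n^{-1}Y_n^2 \tod \sperp\zeta^2$ with converging second moments). You are also right to reject your first idea (showing $|Y_n|$ is UI — impossible, since $\Exp|Y_n| \asymp \sqrt n \to \infty$), and you correctly flag the circularity of invoking the Scheff\'e-type criterion directly on $T_n$.

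The gap is in your handling of the ``bad'' event. You split on $\{|X_n| \geq \delta\|\mu\|n\}$, and for the complement you assert that $\Pr(|X_n| < \delta\|\mu\|n)$ decays ``exponentially (or at least summably fast, by a moment bound).'' Under only $\Exp(\|Z\|^2)<\infty$ this is false: Chebyshev gives $\Pr(|X_n| < \delta\|\mu\|n) = O(1/n)$, which is neither summable nor exponential, and no higher-moment or sub-Gaussian control is available here. Your fallback of Cauchy--Schwarz against $\Exp(\|S_n\|^2) = O(n^2)$ then yields $O(n)\cdot O(n^{-1/2}) = O(\sqrt n)$, which blows up; even bounding $T_n \le |Y_n|$ on the bad event gives only $\sqrt{\Exp Y_n^2}\sqrt{\Pr(\cdot)} \asymp \sqrt{\sperp\spara}/\|\mu\|$, a constant that does not shrink as $\delta \downarrow 0$. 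So the proposed ``truncation-plus-$L^2$ bookkeeping'' does not close. The paper avoids this by splitting on $\{\|S_n\| \le \delta n\}$ instead: on that event $T_n \le \|S_n\| \le \delta n$ \emph{deterministically}, and since $|X_n| \le \|S_n\|$ one has $\Pr(\|S_n\| \le \delta n) \le \Pr(|X_n| \le \delta n) \le \spara/((\|\mu\|-\delta)^2 n)$ by Chebyshev, so the product is $\le \delta\spara/(\|\mu\|-\delta)^2$ — the factors of $n$ cancel, and the constant is made small by choosing $\delta$ small. That deterministic cap on $T_n$ on the bad event is the step your decomposition lacks, and is the crux of the argument.
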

\begin{proof}
The central limit theorem shows that
 $n^{-1} Y_n^2 \tod \sperp \zeta^2$ where $\zeta \sim \cN(0,1)$. Also, since
$\Exp [ Y_n^2 ] =   n \sperp$,
  $n^{-1} \Exp ( Y_n^2 ) \to \sperp = \Exp ( \sperp \zeta^2 )$. 
It is a fact that
if $\theta, \theta_1, \theta_2, \ldots$ are $\RP$-valued random variables with $\theta_n \tod \theta$, then $\Exp  \theta_n  \to \Exp  \theta  < \infty$ if and only if
$\theta_n$ is uniformly integrable: see~\cite[Lemma~4.11]{kall}. Hence we conclude that
\begin{equation}
\label{ui1} n^{-1} Y_n^2 \text{ is uniformly integrable}.
\end{equation}

Fix $\eps >0$. Let $\delta \in (0,\| \mu \|)$ to be chosen later.
For ease of notation, write $T_n =  \| S_n \| - | X_n |$. Then since $T_n \leq \| S_n \|$ and $| X_n | \leq \| S_n \|$, we have
\begin{align*}
\Exp \left[ T_n \1 { T_n > M } \1 { \| S_n \| \leq \delta n } \right] & \leq \delta n \Pr ( \| S_n \| \leq \delta n ) \\
& \leq \delta n \Pr ( | X_n | \leq \delta n ) \\
& \leq \delta n \Pr ( | X_n -   \| \mu \| n | > ( \|\mu \| - \delta ) n ) .\end{align*}
Since $\Exp X_n = n \| \mu\|$ and  $\Var X_n = n \spara$, Chebyshev's inequality then yields
\begin{align*}
 \Exp \left[ T_n \1 { T_n > M } \1 { \| S_n \| \leq \delta n } \right] & \leq \delta n \frac{n \spara}{ ( \| \mu \| - \delta)^2 n^2}  .\end{align*}
It follows that, for suitable choice of $\delta$ (not depending on $M$) and any $M \in (0,\infty)$,
\begin{align*}
\sup_n \Exp \left[ T_n \1 { T_n > M } \1 { \| S_n \| \leq \delta n } \right] \leq \eps. \end{align*}
On the other hand, we use the fact that
\begin{equation}
\label{Ysq}
 0 \leq \| S_n \| - | X_n | = T_n = \frac{ \| S_n \|^2 - X_n^2}{\| S_n \| + | X_n |} = \frac{Y_n^2}{\| S_n \| + | X_n |}   .\end{equation}
Hence 
\begin{align*}
\Exp \left[ T_n \1 { T_n > M } \1 { \| S_n \| > \delta n } \right] &  = \Exp \left[ \tfrac{Y_n^2}{\| S_n \| + | X_n |}  \mathbf{1} \left\{ \tfrac{Y_n^2}{\| S_n \| + | X_n |}  > M \right\} \1 { \| S_n \| > \delta n } \right]\\
& \leq \frac{1}{\delta n} \Exp \left[ Y_n^2 \1 { Y_n^2 > M \delta n } \right]
.\end{align*}
It follows that
\[ \sup_n \Exp \left[ T_n \1 { T_n > M } \1 { \| S_n \| > \delta n } \right]  \leq \frac{1}{\delta} \sup_n \Exp \left[ n^{-1} Y_n^2 \1 { n^{-1} Y_n^2 > M \delta } \right] ,\]
which, for fixed $\delta$, tends to $0$ as $M \to \infty$ by~\eqref{ui1}.

Thus for any $\eps >0$ we have that $\sup_n \Exp \left[ T_n \1 { T_n > M }   \right] \leq \eps$, 
for all $M$ sufficiently large, which completes the proof.
\end{proof}

The next result is of some independent interest, and may be known, although we could find no reference. 

\begin{lemma}
\label{lem2}
Suppose that $\Exp ( \| Z \|^2 ) < \infty$ and $\mu \neq \0$. Then
\[ 0 \leq \| S_n \| - S_n \cdot \hat \mu \to \frac{\sperp \zeta^2}{2 \| \mu \|}, \text{ in } L^1, \text{ as } n \to \infty,\]
for $\zeta \sim \cN(0,1)$. In particular,
\[ 0 \leq  \Exp \| S_n \| - \| \mu \| n  = \frac{\sperp}{2 \| \mu \|} + o(1), \text{ as } n \to \infty .\]
\end{lemma}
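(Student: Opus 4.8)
\medskip
\noindent\textbf{Proof proposal.} The plan is to deduce the lemma from the Pythagorean identity $\|S_n\|^2 = X_n^2 + Y_n^2$, the uniform integrability already established in Lemma~\ref{lem:ui}, and a short Slutsky-type argument. Writing $X_n^- := \max\{-X_n,0\}$, I would start from the decomposition
\[ \|S_n\| - S_n\cdot\hat\mu = \|S_n\| - X_n = \bigl(\|S_n\| - |X_n|\bigr) + \bigl(|X_n| - X_n\bigr) = T_n + 2X_n^-, \]
where $T_n := \|S_n\| - |X_n|$ is the quantity shown uniformly integrable in Lemma~\ref{lem:ui} and, by~\eqref{Ysq}, satisfies $T_n = Y_n^2/(\|S_n\|+|X_n|)$ on $\{S_n\neq\0\}$. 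Nonnegativity of $\|S_n\| - X_n$ is immediate from $\|S_n\|\geq|X_n|\geq X_n$, and taking expectations in the display with $\Exp X_n = n\|\mu\|$ gives $\Exp\|S_n\| - \|\mu\|n = \Exp[\|S_n\| - X_n]$, so the ``in particular'' clause will follow from the $L^1$ statement; as in the proof of Lemma~\ref{lem:ui}, ``convergence in $L^1$'' to $\sperp\zeta^2/(2\|\mu\|)$ is to be read as convergence in distribution together with convergence of expectations (equivalently, by~\cite[Lemma~4.11]{kall}, convergence in distribution of a uniformly integrable sequence). Thus it suffices to show $\|S_n\| - X_n \tod \sperp\zeta^2/(2\|\mu\|)$ with $\{\|S_n\| - X_n\}$ uniformly integrable.

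For the main term I would rewrite $T_n = (Y_n^2/n)\big/\bigl((\|S_n\|+|X_n|)/n\bigr)$. The central limit theorem applied to the mean-zero, variance-$\sperp$ one-dimensional walk $Y_n$ gives $Y_n^2/n \tod \sperp\zeta^2$ with $\zeta\sim\cN(0,1)$ (and trivially so when $\sperp=0$, since then $Y_n\equiv0$), while the strong law gives $\|S_n\|/n\to\|\mu\|$ and $|X_n|/n\to\|\mu\|$ a.s., so the denominator tends a.s.\ to the constant $2\|\mu\|>0$. Slutsky's theorem then yields $T_n \tod \sperp\zeta^2/(2\|\mu\|)$, and since $\{T_n\}$ is uniformly integrable (Lemma~\ref{lem:ui}) and the limit is integrable with mean $\sperp/(2\|\mu\|)$, \cite[Lemma~4.11]{kall} gives $\Exp T_n \to \sperp/(2\|\mu\|)$.

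For the error term I would show $X_n^- \to 0$ both a.s.\ and in $L^1$. Almost surely this is clear, since $X_n/n\to\|\mu\|>0$ forces $X_n\to+\infty$ and hence $X_n^-=0$ eventually. For the $L^1$ convergence, note that $X_n^- \leq -\inf_{m\geq0}X_m$ for every $n$, and that the right-hand side is integrable: a random walk with strictly positive drift and finite increment variance has $\Exp\bigl[\inf_{m\geq0}X_m\bigr]>-\infty$, a classical fact of fluctuation theory (equivalently, its descending ladder height has finite mean; this is where the \emph{second}, not merely the first, moment of $Z\cdot\hat\mu$ enters). Dominated convergence then gives $\Exp[X_n^-]\to0$. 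Assembling the pieces: $\|S_n\| - X_n = T_n + 2X_n^- \tod \sperp\zeta^2/(2\|\mu\|)$ by Slutsky (as $2X_n^-\to0$ a.s.), $\{\|S_n\| - X_n\}$ is uniformly integrable as a sum of two uniformly integrable sequences, and $\Exp[\|S_n\| - X_n] = \Exp T_n + 2\Exp[X_n^-] \to \sperp/(2\|\mu\|)$; this gives the lemma, including the ``in particular'' clause via $\Exp\|S_n\| - \|\mu\|n = \Exp[\|S_n\| - X_n]$.

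I expect the only genuinely delicate point to be the $L^1$ control of $X_n^-$: under a bare second-moment hypothesis, Chebyshev-type tail bounds on $\Pr(X_n\leq-t)$ give only $\sup_n\Exp[X_n^-]<\infty$, not decay to $0$, so one really does need the fluctuation-theoretic input that the global infimum of a positive-drift, finite-variance walk is integrable. Everything else---the CLT, the strong law, Slutsky's theorem, and the uniform integrability imported from Lemma~\ref{lem:ui}---is routine.
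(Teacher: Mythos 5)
Your proof is correct and follows the paper's route almost exactly: the same decomposition $\|S_n\| - X_n = (\|S_n\| - |X_n|) + 2X_n^-$, the same use of~\eqref{Ysq} together with the CLT for $Y_n$, the strong law for the denominator, and Slutsky to get $\|S_n\|-|X_n|\tod \sperp\zeta^2/(2\|\mu\|)$, and the same appeal to Lemma~\ref{lem:ui} for uniform integrability. The one place you diverge is the treatment of $\Exp X_n^-\to0$. You invoke the classical fluctuation-theoretic fact (Kiefer--Wolfowitz) that for a one-dimensional walk with strictly positive drift, $\Exp\bigl[-\inf_{m\geq0}X_m\bigr]<\infty$ if and only if $\Exp\bigl[(\xi^-)^2\bigr]<\infty$, where $\xi=Z\cdot\hat\mu$; this holds here, and dominated convergence then finishes the job. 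The paper instead proves $\Exp X_n^-\to0$ from scratch in Appendix Lemma~\ref{lem:negative-part} by an elementary truncation-and-Chebyshev argument (splitting $\xi$ at a level $B$ and using fourth moments of the bounded part and variance of the tail part), evidently to keep the paper self-contained and avoid importing the ladder-height theory. Both arguments correctly use the second-moment hypothesis, and you rightly flag that $X_n^-$ is the only delicate step --- a bare Chebyshev bound gives boundedness but not decay of $\Exp X_n^-$. In short: same skeleton, different (but equally valid) proof of the $X_n^-$ sub-lemma --- yours is shorter but less elementary; the paper's is longer but self-contained.
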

\begin{proof}
As above, for $x \in \R$ set $x^+ := x \1 { x >0}$, and also set $x^- = -x \1 { x < 0 }$.
Then $x = x^+ - x^-$ and $|x| = x^+ + x^-$, so $x = |x| -2x^-$;
thus $| X_n | - 2  X_n^- =   X_n \leq  | X_n |$, and  
\begin{equation}
\label{X-bounds}
   0 \leq \| S_n \| - | X_n|  \leq \| S_n \| - X_n = \| S_n \| - |X_n| + 2 X_n^- ;\end{equation}
	in particular $\Exp \| S_n \| \geq \Exp X_n = \| \mu \| n$.
Now, we have from~\eqref{Ysq} that
\[ \| S_n \| - |X_n | = \frac{Y_n^2}{\| S_n \| + | X_n |} = \frac{n^{-1} Y_n^2}{ n^{-1} \| S_n \| + n^{-1} | X_n |} , \]
where $n^{-1} Y_n^2 \tod \sperp \zeta^2$ for $\zeta \sim \cN(0,1)$, and, by the strong law of large numbers,
 both $n^{-1} \| S_n\|$ and $n^{-1} | X_n |$ tend to
$\| \mu \|$ a.s. Hence $0 \leq \| S_n \| - |X_n| \tod \frac{\sperp \zeta^2}{2 \| \mu \|}$, and by Lemma~\ref{lem:ui} we can conclude that
$\| S_n\| - | X_n |  \to \frac{\sperp \zeta^2}{2 \| \mu \|}$ in $L^1$.
Moreover,  Lemma~\ref{lem:negative-part} shows that $X_n^- \to 0$ in $L^1$.
Thus the result follows from~\eqref{X-bounds}.
\end{proof}

We can now complete the proof of Theorem~\ref{thm:drift-mean} and then the proof of Theorem~\ref{thm1}.

\begin{proof}[Proof of Theorem~\ref{thm:drift-mean}.]
From the Spitzer--Widom formula~\eqref{s-w} and Lemma~\ref{lem2}, we have  
\begin{align*} \Exp L_n  = 2 \sum_{k=1}^n \frac{1}{k} \left( \| \mu \| k + \frac{\sperp}{2 \| \mu \|} + o(1) \right)  
  = 2 \|\mu \| n + \frac{\sperp}{\| \mu \|} \log n + o (\log n) ,\end{align*}
as claimed.
\end{proof}
 
\begin{proof}[Proof of Theorem~\ref{thm1}.]
Theorem~\ref{thm:drift-mean} shows that
\begin{equation}
\label{eq2} n^{-1/2} | \Exp L_n - 2 \Exp S_n \cdot \hat \mu | \to 0 .
\end{equation}
Then by the triangle inequality
\[ n^{-1/2} | L_n - 2  S_n \cdot \hat \mu | \leq n^{-1/2} | L_n - \Exp L_n - 2 (S_n - \Exp S_n ) \cdot \hat \mu |
+  n^{-1/2} | \Exp L_n - 2 \Exp S_n \cdot \hat \mu | ,\]
which tends to $0$ in~$L^2$ by~\eqref{wx} and~\eqref{eq2}.
\end{proof}

\section{Diameter in the case with drift}
\label{sec:diam-drift}

Now we turn to the diameter $D_n$.
The main aim of this section is to establish the following result, from which we will deduce Theorem~\ref{thm2}.

\begin{theorem}\label{thm:DnL2}
Suppose that $\Exp ( \| Z  \|^2 ) < \infty$ and $\mu  \neq \0$.  Then, as $n\to\infty$,
\begin{equation}\label{eqn:DnL2conv}
n^{-1/2}\left| D_n - \Exp D_n  - (S_n-\Exp S_n )\cdot \hat \mu \right| \rightarrow 0, \text{ in } L^2.
\end{equation}
\end{theorem}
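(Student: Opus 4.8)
The plan is to recognise that the assertion~\eqref{eqn:DnL2conv} is equivalent to a variance bound that can be bootstrapped from the asymptotics for the perimeter already established. Writing $X_n := S_n \cdot \hat\mu$, the random variable inside the modulus in~\eqref{eqn:DnL2conv} is exactly $(D_n - X_n) - \Exp(D_n - X_n)$, so~\eqref{eqn:DnL2conv} amounts to the statement $n^{-1}\Var(D_n - X_n) \to 0$; and since $\Var(D_n - X_n) \le \Exp[(D_n - X_n)^2]$, it is in fact enough to prove $\Exp[(D_n - X_n)^2] = o(n)$.

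The key observation is the deterministic chain of inequalities
\[ S_n \cdot \hat\mu \;\le\; \| S_n \| \;\le\; D_n \;\le\; \tfrac12 L_n , \]
valid for every $n$: the first step is the Cauchy--Schwarz inequality; the second holds because $S_0 = \0$ and $S_n$ both belong to $\cH_n$, so $D_n \ge \| S_n - S_0 \|$; and the third is the lower bound in~\eqref{l-d-ineq}, i.e.\ the fact that a compact convex set has perimeter at least twice its diameter (perimeter being monotone under inclusion of convex sets). Hence $0 \le D_n - X_n \le \tfrac12 L_n - X_n$, and therefore $\Exp[(D_n - X_n)^2] \le \Exp[(\tfrac12 L_n - X_n)^2]$. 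It then remains to show $\Exp[(\tfrac12 L_n - X_n)^2] = o(n)$, which follows by writing this second moment as $\Var(\tfrac12 L_n - X_n) + (\Exp[\tfrac12 L_n - X_n])^2$ and bounding each term: by Theorem~\ref{thm:drift-mean}, $\Exp[\tfrac12 L_n - X_n] = \tfrac12 \Exp L_n - \| \mu \| n = O(\log n)$, so its square is $o(n)$, while the Wade--Xu estimate~\eqref{wx} says precisely that $n^{-1}\Var(L_n - 2X_n) \to 0$, i.e.\ $\Var(\tfrac12 L_n - X_n) = o(n)$. Adding the two contributions closes the argument.

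There is no real analytical obstacle here; the content is the sandwich above, which reduces the diameter problem to the known behaviour of the perimeter. The only points requiring a moment's care are that the two ingredients, Theorem~\ref{thm:drift-mean} and~\eqref{wx}, hold under exactly the standing hypotheses $\Exp(\|Z\|^2) < \infty$ and $\mu \neq \0$, so that no extra assumptions are incurred, and that each inequality in the chain is valid for every $n$ (including small $n$, where $S_n \cdot \hat\mu$ may be negative) rather than only eventually. As a by-product, the same chain at the level of expectations gives $\| \mu \| n \le \Exp D_n \le \tfrac12 \Exp L_n = \| \mu \| n + O(\log n)$, hence $\Exp D_n = \| \mu \| n + o(\sqrt n)$, which is the additional fact one needs in order to deduce Theorem~\ref{thm2} from Theorem~\ref{thm:DnL2}.
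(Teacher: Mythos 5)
Your proof is correct, and it takes a genuinely different and considerably shorter route than the paper's. The paper devotes essentially the whole of Section~\ref{sec:diam-drift} to adapting the martingale-difference resampling scheme of~\cite{wx1} to the diameter functional: one controls the direction along which the diameter is realised (Lemma~\ref{lemma:Dthetabound}), the positions of the extremal indices and the effect of resampling a single increment (Lemmas~\ref{lem:Eprob} and~\ref{lem:diambound}), and concludes via Lemmas~\ref{lem:approx} and~\ref{lem:ExpWsqconv} that the martingale differences $\Delta_{n,i}$ are uniformly close in $L^2$ to $(Z_i-\mu)\cdot\hat\mu$. You bypass all of this with the deterministic sandwich $S_n\cdot\hat\mu \le \|S_n\| \le D_n \le \tfrac12 L_n$, valid for every $n$, which squeezes $\Exp[(D_n - S_n\cdot\hat\mu)^2]$ between $0$ and $\Exp[(\tfrac12 L_n - S_n\cdot\hat\mu)^2]$; the latter is $o(n)$ because its variance part is $\tfrac14\Var(L_n-2S_n\cdot\hat\mu)=o(n)$ by~\eqref{wx}, and its squared mean is $O((\log n)^2)$ by Theorem~\ref{thm:drift-mean}, both of which are available before Section~\ref{sec:diam-drift}. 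Since the variance is dominated by the second moment, $\Var(D_n-S_n\cdot\hat\mu)=o(n)$ follows, which is exactly~\eqref{eqn:DnL2conv}. The paper does use this sandwich, but only at the level of first moments (Lemma~\ref{lem:diam-expec}) to control $\Exp D_n$; your observation is simply that it transfers second-moment information too, rendering the resampling machinery unnecessary for $D_n$. What the longer argument buys is a self-contained analysis of $D_n$ that does not lean on the perimeter estimate~\eqref{wx} and mirrors the structure of~\cite{wx1}; what yours buys is brevity, together with the bonus that it proves $\Exp[(D_n-S_n\cdot\hat\mu)^2]=o(n)$ directly, from which Theorems~\ref{thm:DnL2} and~\ref{thm2} both drop out at once.
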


Theorem~\ref{thm:DnL2} is the analogue for $D_n$
of the result~\eqref{wx} for $L_n$, established in Theorem~1.3 of~\cite{wx1}. Our  approach
to proving Theorem~\ref{thm:DnL2}
is similar in outline to that in~\cite{wx1}, where a martingale difference idea (which we explain below in the present context)
was combined with Cauchy's formula for the perimeter length. Here, the place of Cauchy's formula is taken by 
the formula
\begin{equation}
\label{diam-formula}
 \diam A = \sup_{0 \leq \theta \leq \pi} \rho_A (\theta ), \end{equation}
where $A \subset \R^d$ is a non-empty compact set, and $\rho_A(\theta) := \sup_{\bx \in A} ( \bx \cdot \be_\theta ) - \inf_{\bx \in A} (\bx \cdot \be_\theta)$;
see Lemma~6 of~\cite{mx} for a derivation of~\eqref{diam-formula}.

Before embarking on the proof of Theorem~\ref{thm:DnL2}, we observe the following result.
\begin{lemma}
\label{lem:diam-expec}
Suppose that $\Exp ( \| Z  \|^2 ) < \infty$ and $\mu  \neq \0$. 
There exists  $C < \infty$ such that 
\[ 0 \leq \Exp   D_n -  \| \mu \| n \leq C( 1 + \log n), \text{ for all } n \geq 1. \]
\end{lemma}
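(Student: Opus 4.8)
The plan is to sandwich $\Exp D_n$ between $\|\mu\|n$ and $\|\mu\|n + C(1+\log n)$ using only elementary geometric bounds together with facts already established earlier in the paper; this lemma is essentially a corollary of Theorem~\ref{thm:drift-mean} (equivalently of Lemma~\ref{lem2}).

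First I would dispense with the lower bound. Since $S_0 = \0$ and $S_n$ are both points of $\{S_0,\ldots,S_n\}$, we have $D_n \geq \|S_n\| \geq S_n\cdot\hat\mu = X_n$, and $\Exp X_n = \|\mu\|n$, so taking expectations gives $\Exp D_n \geq \|\mu\|n$. (This step presupposes $\Exp D_n < \infty$, which is immediate from $D_n \leq 2\max_{0\leq k\leq n}\|S_k\| \leq 2\sum_{j=1}^n\|Z_j\|$ and $\Exp\|Z\|<\infty$.)

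For the upper bound, I would use the standard fact that the perimeter of a planar convex body is at least twice its diameter, i.e.\ $D_n \leq L_n/2$ (already invoked in the proof of Theorem~\ref{t:diam_lln}), so that $\Exp D_n \leq \tfrac12\Exp L_n$. One then has two equivalent routes to conclude. Either apply Theorem~\ref{thm:drift-mean} directly: $\Exp L_n = 2\|\mu\|n + (\sperp/\|\mu\| + o(1))\log n$ gives $n_0$ and $C_0<\infty$ with $\Exp L_n \leq 2\|\mu\|n + C_0\log n$ for all $n\geq n_0$, hence $\Exp D_n - \|\mu\|n \leq \tfrac12 C_0\log n$ for $n\geq n_0$. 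Alternatively, and without appealing to the sharp constant, combine the Spitzer--Widom formula~\eqref{s-w} with the uniform bound $\Exp\|S_k\| \leq \|\mu\|k + C_1$ supplied by Lemma~\ref{lem2} to obtain, for all $n\geq 1$, $\Exp L_n = 2\sum_{k=1}^n k^{-1}\Exp\|S_k\| \leq 2\|\mu\|n + 2C_1\sum_{k=1}^n k^{-1} \leq 2\|\mu\|n + 2C_1(1+\log n)$.

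Finally, to produce a single constant $C$ valid for every $n\geq 1$ rather than only for $n\geq n_0$, I would note that $\Exp D_n - \|\mu\|n$ is finite for each of the finitely many $n<n_0$, so enlarging $C$ absorbs these finitely many values into the bound $C(1+\log n)$. There is no substantial obstacle here: the only point requiring a little care is the uniformity over small $n$, and all of the genuine analytic content has already been carried out in establishing Theorem~\ref{thm:drift-mean} and Lemma~\ref{lem2}.
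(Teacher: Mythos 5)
Your proof is correct and takes essentially the same route as the paper: lower bound via $D_n \geq \|S_n\| \geq S_n\cdot\hat\mu$ with $\Exp(S_n\cdot\hat\mu)=\|\mu\|n$, and upper bound via $D_n \leq L_n/2$ combined with the asymptotic for $\Exp L_n$ from Theorem~\ref{thm:drift-mean}. Your alternative route through the Spitzer--Widom formula and Lemma~\ref{lem2} is simply the content of the proof of Theorem~\ref{thm:drift-mean} unwrapped, and your remark about absorbing finitely many small $n$ into the constant is a sensible clarification of a point the paper leaves implicit.
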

\begin{proof}
The lower bound follows from Lemma~\ref{lem2} and the fact that $D_n \geq \| S_n \|$.
The upper bound follows from the fact that $D_n \leq L_n/2$ and the fact that,
by Theorem~\ref{thm:drift-mean}, $\Exp L_n \leq 2\|\mu \| n + C(1+ \log n)$.
\end{proof}

Now we describe the martingale difference construction, which is standard.
Recall that $\cF_0 := \{ \emptyset, \Omega \}$ and $\cF_n := \sigma(Z_1,\ldots,Z_n)$ for $n \geq 1$.
Let $Z_1',Z_2',\ldots$ be an independent copy of the sequence $Z_1,Z_2,\ldots$. 
Fix $n \in \N$. For $i \in \{1,\ldots,n\}$, define
\[
S_j^{(i)} := \begin{cases} S_j &\text{if }\ j<i,\\ S_j-Z_i+Z_i' &\text{if }\ j\geq i; \end{cases} 
\]
then $(S_j^{(i)};0\leq j \leq n)$ is the random walk $(S_j;0\leq j \leq n)$ but with $Z_i$ `resampled' and replaced by $Z_i'$.
For $i \in \{1,\ldots,n\}$, define
\begin{equation}
D_n^{(i)} := \diam \{ S_0^{(i)},\ldots,S_n^{(i)}\}, \text{ and } \Delta_{n,i}:= \Exp ( D_n-D_n^{(i)} \mid \cF_i ).
\label{eqn:Dni}
\end{equation}
Observe that we also have the representation $\Delta_{n,i} = \Exp ( D_n \mid \cF_i ) - \Exp ( D_n \mid \cF_{i-1} )$
and hence $\Delta_{n,i}$ is a martingale difference sequence, i.e., $\Delta_{n,i}$ is $\cF_i$-measurable
with $\Exp ( \Delta_{n,i} \mid \cF_{i-1} ) = 0$.
The utility of this construction is the following result (see e.g.~Lemma~2.1 of~\cite{wx1}).
\begin{lemma} 
\label{lemma:VarGn}
Let $n\in \N$. Then  $D_n - \Exp D_n = \sum_{i=1}^n \Delta_{n,i}$, and  $\Var D_n = \sum_{i=1}^n\Exp ( \Delta_{n,i}^2 )$.
\end{lemma}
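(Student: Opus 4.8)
The plan is to use the martingale-difference structure already recorded in the paragraph preceding the statement. First I would note the integrability that makes everything legitimate: since $S_0 = \0$, we have $D_n = \max_{0\le j,k\le n}\|S_j - S_k\| \le 2\max_{0\le l\le n}\|S_l\| \le 2\sum_{m=1}^n \|Z_m\|$, so by Cauchy--Schwarz $\Exp(D_n^2) \le 4 n^2 \Exp(\|Z\|^2) < \infty$ under the standing hypothesis. Hence $D_n \in L^2$, and consequently each $\Delta_{n,i} = \Exp(D_n \mid \cF_i) - \Exp(D_n \mid \cF_{i-1})$ lies in $L^2$ as well, being a difference of conditional expectations of an $L^2$ variable.

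For the first identity I would simply telescope, using the representation $\Delta_{n,i} = \Exp(D_n\mid\cF_i) - \Exp(D_n\mid\cF_{i-1})$:
\[ \sum_{i=1}^n \Delta_{n,i} = \Exp(D_n\mid\cF_n) - \Exp(D_n\mid\cF_0) . \]
Now $D_n = \diam\{S_0,\ldots,S_n\}$ is a measurable function of $Z_1,\ldots,Z_n$, hence $\cF_n$-measurable, so $\Exp(D_n\mid\cF_n)=D_n$; and $\cF_0 = \{\emptyset,\Omega\}$, so $\Exp(D_n\mid\cF_0) = \Exp D_n$. This gives $D_n - \Exp D_n = \sum_{i=1}^n \Delta_{n,i}$. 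For the variance identity I would expand the square and use orthogonality of the martingale differences:
\[ \Var D_n = \Exp\Bigl[\Bigl(\textstyle\sum_{i=1}^n\Delta_{n,i}\Bigr)^{\!2}\Bigr] = \sum_{i=1}^n \Exp(\Delta_{n,i}^2) + 2\!\!\sum_{1\le i<j\le n}\!\!\Exp(\Delta_{n,i}\Delta_{n,j}) . \]
For $i<j$, since $\Delta_{n,i}$ is $\cF_i$-measurable and $\cF_i\subseteq\cF_{j-1}$, conditioning on $\cF_{j-1}$ and using $\Exp(\Delta_{n,j}\mid\cF_{j-1})=0$ gives $\Exp(\Delta_{n,i}\Delta_{n,j}) = \Exp\bigl[\Delta_{n,i}\,\Exp(\Delta_{n,j}\mid\cF_{j-1})\bigr] = 0$; the product is integrable by Cauchy--Schwarz, so this is valid. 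Hence all cross terms vanish and $\Var D_n = \sum_{i=1}^n \Exp(\Delta_{n,i}^2)$.

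There is no genuine obstacle here; the lemma is the standard Doob martingale-difference decomposition applied to $D_n$. The only two points needing a word of care are the $L^2$-integrability of $D_n$ (handled above by the crude bound $D_n\le 2\sum_{m=1}^n\|Z_m\|$), which underwrites every manipulation with conditional expectations and the vanishing cross terms, and the equivalence of the two representations of $\Delta_{n,i}$, namely $\Exp(D_n - D_n^{(i)}\mid\cF_i) = \Exp(D_n\mid\cF_i) - \Exp(D_n\mid\cF_{i-1})$. The latter rests on the resampling argument already asserted in the text: conditionally on $\cF_{i-1}$ the resampled sequence $(Z_1,\ldots,Z_{i-1},Z_i',Z_{i+1},\ldots,Z_n)$ has the same law as the original, while $D_n^{(i)}$ does not involve $Z_i$, so $\Exp(D_n^{(i)}\mid\cF_i) = \Exp(D_n^{(i)}\mid\cF_{i-1}) = \Exp(D_n\mid\cF_{i-1})$; I would invoke this directly rather than re-derive it.
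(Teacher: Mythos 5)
Your proof is correct. The paper does not actually prove this lemma; it simply cites Lemma~2.1 of Wade and Xu~\cite{wx1}, since the decomposition is a standard Doob martingale-difference device. The argument you supply --- telescoping $\Delta_{n,i} = \Exp(D_n \mid \cF_i) - \Exp(D_n \mid \cF_{i-1})$ for the first identity, and orthogonality of martingale differences (cross terms vanish by conditioning on $\cF_{j-1}$) for the variance identity --- is exactly the standard proof, and you have been appropriately careful about the two points that actually need checking: $L^2$-integrability of $D_n$ via the crude bound $D_n \le 2\sum_{m=1}^n \|Z_m\|$, and the consistency of the two representations of $\Delta_{n,i}$, which rests on $D_n^{(i)}$ being independent of $Z_i$ and equidistributed with $D_n$ conditionally on $\cF_{i-1}$.
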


Recall that $\unittheta$ denotes the unit vector in direction $\theta$.
For $\theta \in [0,\pi]$, define 
\[
M_n(\theta):= \max_{0\leq j\leq n} (S_j\cdot\unittheta), \text{ and }
 m_n(\theta):= \min_{0\leq j\leq n} (S_j\cdot \unittheta) ,\]
and define   $R_n(\theta) := M_n(\theta)-m_n(\theta)$.
Note that since $S_0=\0$, we have $M_n(\theta)\geq 0$ and $m_n(\theta)\leq 0$, a.s. 
It follows from~\eqref{diam-formula} that $D_n = \sup_{0\leq \theta \leq \pi} R_n(\theta)$.

Similarly, when the $i$th increment is resampled,
$D_n^{(i)} = \sup_{0\leq \theta \leq \pi} R_n^{(i)}(\theta)$,
where $R_n^{(i)}(\theta) := M_n^{(i)}(\theta)-m_n^{(i)}(\theta)$, with
\[M_n^{(i)}(\theta):= \max_{0\leq j\leq n}(S_j^{(i)}\cdot\unittheta), \text{ and } m_n^{(i)}(\theta):= \min_{0\leq j\leq n}(S_j^{(i)}\cdot \unittheta). \]

Thus to study $\Delta_{n,i}$ as defined at~\eqref{eqn:Dni}, we are interested in
\begin{equation}
\label{eq:d-difference}
 D_n - D_n^{(i)} = \sup_{0\leq \theta \leq \pi} R_n(\theta) - \sup_{0\leq \theta \leq \pi} R_n^{(i)}(\theta) .\end{equation}
For the remainder of this section we suppose,
without loss of generality, that $\mu=\|\mu\| \be_{\pi/2}$ with $\|\mu\| \in (0,\infty)$.
An important observation is that the diameter does not
deviate far from the direction of the drift.
For $\delta \in (0,\pi/2)$ and $i \in \{1,\ldots,n\}$, define the event  
\[ A_{n,i} (\delta):= \left\{ \left| \frac{\pi}{2} - \argmax_{0\leq \theta \leq \pi} R_n(\theta) \right| < \delta  \right\}
\cap \left\{ \left| \frac{\pi}{2} - \argmax_{0\leq \theta \leq \pi} R^{(i)}_n(\theta) \right| < \delta  \right\} . \]

\begin{lemma}\label{lemma:Dthetabound}
Suppose that $\Exp\|Z \|<\infty$ and $\mu = \|\mu\|\be_{\pi/2} \neq \0$. 
Then for any $\delta \in (0,\pi/2)$, $\lim_{n\rightarrow \infty} \min_{1 \leq i \leq n} \Pr(A_{n,i}(\delta))=1$.
\end{lemma}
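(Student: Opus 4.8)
The plan is to exploit the drift. Since $\mu = \|\mu\| \be_{\pi/2}$, the law of large numbers forces $\cH_n$ to be long in the $\be_{\pi/2}$-direction, so the width $R_n(\theta)$ is, up to an additive error that is $o(n)$ with probability tending to $1$, equal to $n\|\mu\| \sin\theta$; this function of $\theta \in [0,\pi]$ is maximised only at $\theta = \pi/2$, and satisfies $\sin\theta \le \cos\delta < 1$ whenever $|\theta - \pi/2| \ge \delta$, so the width in any direction bounded away from $\pi/2$ is eventually strictly smaller than $R_n(\pi/2)$, which confines every maximiser to $(\pi/2 - \delta, \pi/2 + \delta)$. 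The same will apply to the resampled walk after controlling the perturbation caused by replacing one increment.

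First I would record a deterministic envelope for $R_n$ and its resampled version. Writing $W_n := \max_{0 \le k \le n} \| S_k - k\mu\|$ and using $\mu \cdot \be_\theta = \|\mu\| \sin\theta \ge 0$ for $\theta \in [0,\pi]$ together with $S_0 = \0$, one gets $|M_n(\theta) - n\|\mu\|\sin\theta| \le W_n$ and $m_n(\theta) \in [-W_n, 0]$, hence $|R_n(\theta) - n\|\mu\|\sin\theta| \le 2 W_n$ for every $\theta \in [0,\pi]$. Resampling the $i$th increment shifts each $S_j \cdot \be_\theta$ by at most $V_i := \|Z_i\| + \|Z_i'\|$, so the identical chain of inequalities gives $|R_n^{(i)}(\theta) - n\|\mu\|\sin\theta| \le 2 (W_n + V_i)$. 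A short maximal argument (cf.\ the proof of Theorem~\ref{t:ss_lln}) upgrades~\eqref{eq:lln} to $n^{-1} W_n \to 0$ a.s.; moreover the law of $V_i$ does not depend on $i$ and $\Exp V_i = 2 \Exp \|Z\| < \infty$, so $\Pr(V_i > cn) \to 0$ as $n \to \infty$ for each fixed $c > 0$, uniformly in $i$.

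Now fix $c := \|\mu\|(1-\cos\delta)/16 > 0$ and set $G_{n,i} := \{ W_n \le cn\} \cap \{ V_i \le cn\}$. On $G_{n,i}$ the envelope bounds give, for all $\theta$ with $|\theta - \pi/2| \ge \delta$,
\[ R_n^{(i)}(\pi/2) - R_n^{(i)}(\theta) \ge n \|\mu\|(1-\cos\delta) - 4(W_n + V_i) \ge \tfrac12 n \|\mu\| (1 - \cos\delta) > 0, \]
and the analogous (even stronger) inequality holds for $R_n$ with $V_i$ omitted. Since $\theta \mapsto R_n(\theta)$ and $\theta \mapsto R_n^{(i)}(\theta)$ are continuous on $[0,\pi]$ (each a maximum of finitely many continuous functions minus a minimum of finitely many continuous functions), they attain their maxima, and the strict inequality above forces every maximiser of $R_n$ and of $R_n^{(i)}$ to lie in $(\pi/2 - \delta, \pi/2 + \delta)$; in particular $G_{n,i} \subseteq A_{n,i}(\delta)$. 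Hence $\Pr(A_{n,i}(\delta)) \ge 1 - \Pr(W_n > cn) - \Pr(V_i > cn)$, and taking $\min_{1 \le i \le n}$ and then $n \to \infty$ yields $\min_{1 \le i \le n} \Pr(A_{n,i}(\delta)) \to 1$.

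The argument is essentially routine; the points needing a little care are the uniformity in $i$, which holds because $\Pr(W_n > cn)$ is free of $i$ and $\Pr(V_i > cn) = \Pr(V_1 > cn)$, both vanishing as $n \to \infty$, and the phrasing around ``$\argmax$'', which I sidestep by proving the interpretation-independent statement that \emph{every} maximiser of $R_n$ (resp.\ $R_n^{(i)}$) lies within $\delta$ of $\pi/2$. I do not expect any genuine obstacle here.
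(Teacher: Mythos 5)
Your proof is correct, and it takes a mildly different route from the paper's. The paper controls the two coordinate projections $S_j\cdot\be_0$ and $S_j\cdot\be_{\pi/2}$ separately via the strong law, then argues geometrically (via the diameter-achieving segment) that on the resulting event the maximiser $\theta_n$ of $R_n$ has $|\cos\theta_n|\le 4\eps/\|\mu\|$, and finally dispatches the resampled copy not by controlling the perturbation but by noting $\theta_n^{(i)}\eqd\theta_n$ and applying a two-term union bound. You instead bound the full deviation $W_n=\max_{0\le k\le n}\|S_k-k\mu\|$, derive the quantitative envelope $|R_n(\theta)-n\|\mu\|\sin\theta|\le 2W_n$ uniformly over $\theta\in[0,\pi]$, and conclude strict dominance $R_n(\pi/2)>R_n(\theta)$ for $|\theta-\pi/2|\ge\delta$ on $\{W_n\le cn\}$; for the resampled walk you bound the additional perturbation explicitly by $V_i=\|Z_i\|+\|Z_i'\|$ and use $\Pr(V_1>cn)\to 0$. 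Your version has the advantage of replacing the paper's slightly informal argument about the diameter segment with an explicit inequality that simultaneously confines \emph{every} maximiser (sidestepping any ambiguity in $\argmax$), and of making the dependence on $i$ fully transparent; the paper's version avoids having to bound the perturbation at all, but at the cost of a separate union-bound step. The one place your write-up is terse is the assertion that $n^{-1}W_n\to 0$ a.s.\ from~\eqref{eq:lln} alone; this is a standard maximal argument (split the maximum at $N_\eps$ and let $n\to\infty$), and it is fine, but a referee would want it spelled out in a line.
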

\begin{proof}
Fix $\delta \in (0,\pi/2)$.
Note that $S_j \cdot \be_0$ is a random walk on $\R$ with mean increment $\Exp ( Z \cdot \unitzero ) = \mu \cdot \unitzero = 0$. 
Hence the strong law of large numbers implies that for any $\eps  >0$, 
\[ \max_{0 \leq j \leq n} | S_j \cdot \unitzero | \leq \eps n,  \]
for all $n\geq N_{\eps}$ with $\Pr(N_{\eps}<\infty)=1$.
Similarly, since $S_j \cdot  \unithalfpi$ is a random walk on $\R$
with mean increment $\| \mu \| >0$, there exists $N'$ with $\Pr(N'<\infty)=1$ such that
\[ S_j \cdot  \unithalfpi \geq \tfrac{1}{2} \|\mu\| j, \text{ for all } j \geq N'. \]
Let $A'_n(\eps)$ denote the event
\[\Bigl\{\max_{0\leq j\leq n} |S_j \cdot \unitzero |\leq \eps n\Bigr\} \cap \left\{ S_n \cdot \be_{\pi/2} \geq \tfrac{1}{2} \|\mu\| n \right\}.\]
Then if $A'_n (\eps ) $ occurs, any line segment that achieves the diameter has length at least $\frac{1}{2} \|\mu\|  n$ and horizontal component at most $2\eps  n$. 
Thus if $\theta_n= \argmax_{0\leq \theta \leq \pi} R_n(\theta)$ we have
\[|\cos \theta_n|\leq \frac{4\eps }{\|\mu\|}, \text{ on } A'_n (\eps).\]
Thus for $\eps$ sufficiently small we have that $A'_n(\eps )$ implies $|\theta_n - \pi/2| < \delta$. Hence
\[\Pr( |\theta_n - \pi/2| < \delta )\geq\Pr(A'_n(\eps ))\geq \Pr\left(n\geq \max\{N_\eps ,N'\}\right)\rightarrow \Pr \left(\max\{N_\eps ,N'\}<\infty\right)=1 .\]
But  $\theta^{(i)}_n= \argmax_{0\leq \theta \leq \pi} R^{(i)}_n(\theta)$ has the same distribution as $\theta_n$, so
\[ \min_{1 \leq i \leq n} \Pr ( \{ |\theta_n - \pi/2| < \delta \} \cap \{ |\theta^{(i)}_n - \pi/2| < \delta  \} )
\geq 1 - 2 \Pr ( |\theta_n - \pi/2| \geq \delta ) ,\]
and the result follows.
\end{proof}

Lemma~\ref{lemma:Dthetabound} tells us that the key to understanding~\eqref{eq:d-difference}
is to understand what is happening with  $R_n(\theta)$
and $R_n^{(i)} (\theta)$ for $\theta \approx \pi/2$.
The next important observation is that for $\theta \in (0,\pi)$, the one-dimensional
random walk $S_j \cdot \be_\theta$ has drift $\mu \cdot \be_\theta = \mu \sin \theta >0$,
so, with very high probability $M_n (\theta)$ is attained somewhere near the end of the walk,
and $m_n (\theta)$ somewhere near the start.

To formalize this statement, and its consequence for $R_n(\theta) - R_n^{(i)}(\theta)$, define
\begin{align*}
  \bar{J}_n(\theta) & := \argmax_{0\leq j\leq n} (S_j\cdot \unittheta), \text{ and } \ubar{J}_n(\theta):= \argmin_{0\leq j\leq n} (S_j\cdot \unittheta);\\
	\bar{J}_n^{(i)}(\theta) & := \argmax_{0\leq j\leq n}  (S_j^{(i)}\cdot \unittheta ), \text{ and } 
	\ubar{J}_n^{(i)}(\theta):= \argmin_{0\leq j\leq n}  (S_j^{(i)}\cdot \unittheta ).
	\end{align*}
For $\gamma \in (0,1/2)$ 
 (a constant that will be chosen to be suitably small later in our argument), 
we denote by $E_{n,i}(\gamma)$ the event that the following occur:
\begin{itemize}
\item for all $\theta \in [\pi/4,3\pi/4]$, $\ubar{J}_n(\theta)<\gamma n$ and $\bar{J}_n(\theta)>(1-\gamma) n$;
\item for all $\theta \in [\pi/4,3\pi/4]$, $\ubar{J}_n^{(i)}(\theta)<\gamma n$ and $\bar{J}_n^{(i)}(\theta)>(1-\gamma) n$;
\end{itemize}
note that the choice of interval $[\pi/4, 3 \pi/4]$ could be replaced by any other
interval containing $\pi/2$ and bounded away from $0$ and $\pi$.
Define $I_{n,\gamma}:= \{1, \ldots ,n\}\cap [\gamma n, (1-\gamma)n]$. 
The next result is contained in Lemma~4.1 of~\cite{wx1}.

\begin{lemma}
\label{lem:Eprob}
For any $\gamma \in (0,1/2)$ the following hold.
\begin{itemize}
\item[(i)] If $i \in I_{n,\gamma}$, then, on the event $E_{n,i} (\gamma)$, 
\begin{equation} 
 R_n(\theta) - R_n^{(i)}(\theta) 
= (Z_i - Z'_i)\cdot \unittheta  , \text{ for any } \theta \in [\pi/4, 3\pi/4].
\label{eqn:rangeEquation} 
\end{equation}
\item[(ii)] If $\Exp \| Z \| < \infty$ and $\mu\neq \0$ then $\lim_{n \to \infty} \min_{1\leq i \leq n} \Pr (  E_{n,i}( \gamma) ) = 1$.
\end{itemize}
\end{lemma}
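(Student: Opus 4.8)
The plan is to deduce (i) from a deterministic identity valid on $E_{n,i}(\gamma)$, and (ii) from the strong law of large numbers together with a time-reversal symmetry and a union bound; both parts are essentially Lemma~4.1 of~\cite{wx1}, and I would re-derive them in the present notation as follows.

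For (i), fix $\theta\in[\pi/4,3\pi/4]$ and $i\in I_{n,\gamma}$, so that $\gamma n\le i\le(1-\gamma)n$, and work on $E_{n,i}(\gamma)$. Recall $S_j^{(i)}=S_j$ for $j<i$, while for $j\ge i$ one has $S_j^{(i)}\cdot\unittheta=S_j\cdot\unittheta-(Z_i-Z_i')\cdot\unittheta$, a $j$-independent shift of $S_j\cdot\unittheta$. On $E_{n,i}(\gamma)$ we have $\ubar J_n(\theta)<\gamma n\le i$ and $\ubar J_n^{(i)}(\theta)<\gamma n\le i$, so both $m_n(\theta)$ and $m_n^{(i)}(\theta)$ equal $\min_{0\le j<i}(S_j\cdot\unittheta)$ and hence coincide; likewise $\bar J_n(\theta)>(1-\gamma)n\ge i$ and $\bar J_n^{(i)}(\theta)>(1-\gamma)n\ge i$, so $M_n(\theta)=\max_{i\le j\le n}(S_j\cdot\unittheta)$ and $M_n^{(i)}(\theta)=M_n(\theta)-(Z_i-Z_i')\cdot\unittheta$. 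Subtracting the two ranges gives $R_n(\theta)-R_n^{(i)}(\theta)=M_n(\theta)-M_n^{(i)}(\theta)=(Z_i-Z_i')\cdot\unittheta$, which is~\eqref{eqn:rangeEquation}; this holds for every $\theta\in[\pi/4,3\pi/4]$ since the defining bullets of $E_{n,i}(\gamma)$ quantify over that whole arc.

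For (ii), write $F_n(\gamma)$ for the first bulleted event defining $E_{n,i}(\gamma)$, which does not involve $i$, and $F_n^{(i)}(\gamma)$ for the second. Since $Z_1,\dots,Z_{i-1},Z_i',Z_{i+1},\dots,Z_n$ are i.i.d.\ copies of $Z$, the resampled walk $(S_j^{(i)})_{0\le j\le n}$ has the same law as $(S_j)_{0\le j\le n}$, so $\Pr(F_n^{(i)}(\gamma))=\Pr(F_n(\gamma))$ and hence $\min_{1\le i\le n}\Pr(E_{n,i}(\gamma))\ge 2\Pr(F_n(\gamma))-1$; it therefore suffices to show $\Pr(F_n(\gamma))\to 1$. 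Using $\mu=\|\mu\|\be_{\pi/2}$, we have $\mu\cdot\unittheta=\|\mu\|\sin\theta\ge\|\mu\|/\sqrt2$ uniformly over $\theta\in[\pi/4,3\pi/4]$, and the strong law of large numbers provides, for each $\eps>0$, an a.s.\ finite $N_\eps$, not depending on $\theta$, with $\|S_j-j\mu\|<\eps j$ for all $j\ge N_\eps$. Taking $\eps=\|\mu\|/(2\sqrt2)$ gives $S_j\cdot\unittheta\ge\|\mu\|j/(2\sqrt2)>0=S_0\cdot\unittheta$ for every $j>N_\eps$ and every $\theta\in[\pi/4,3\pi/4]$, which forces every minimiser of $j\mapsto S_j\cdot\unittheta$ on $[0,n]$ to lie in $[0,N_\eps]$; consequently, on $\{N_\eps<\gamma n\}$, an event whose probability tends to $1$, we have $\ubar J_n(\theta)<\gamma n$ for all $\theta\in[\pi/4,3\pi/4]$. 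Applying the same bound to the time-reversed walk $\hat S_j:=S_n-S_{n-j}$, $0\le j\le n$, which has the law of $(S_j)_{0\le j\le n}$ and whose minimisers of $k\mapsto\hat S_k\cdot\unittheta$ are the images under $k\mapsto n-k$ of the maximisers of $j\mapsto S_j\cdot\unittheta$, yields $\bar J_n(\theta)>(1-\gamma)n$ for all $\theta\in[\pi/4,3\pi/4]$ with probability tending to $1$ as well. Intersecting the two events gives $\Pr(F_n(\gamma))\to 1$.

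The only delicate point is that the two estimates in (ii) must hold simultaneously over the whole arc $\theta\in[\pi/4,3\pi/4]$ and uniformly in $i\in\{1,\dots,n\}$; the uniformity in $\theta$ is automatic because the random time $N_\eps$ supplied by the strong law depends only on the sequence $(S_j)$ and not on $\theta$, and the uniformity in $i$ collapses, via the equality in law of the resampled and original walks, onto the single ($i$-free) event $F_n(\gamma)$.
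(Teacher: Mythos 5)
The paper itself offers no proof of this lemma: it simply states that the result ``is contained in Lemma~4.1 of~\cite{wx1}'' and moves on. Your proposal therefore supplies an argument where the paper supplies none, and it is correct. Part~(i) is exactly the deterministic bookkeeping one expects: on $E_{n,i}(\gamma)$, for $\gamma n \le i \le (1-\gamma)n$, both walks attain their minimum strictly before time $i$ (where the two paths agree) and their maximum strictly after time $i$ (where the resampled path is the original shifted by the constant $-(Z_i-Z_i')\cdot\unittheta$), so the two minima coincide and the two maxima differ by precisely $(Z_i-Z_i')\cdot\unittheta$; this holds simultaneously for all $\theta$ in the arc because the bullets defining $E_{n,i}(\gamma)$ quantify over the whole arc. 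Part~(ii) correctly exploits three observations: the first bullet is $i$-free, the resampled walk has the same law as the original (so the two bullets have equal probability, uniformly in $i$, and a union bound reduces everything to $\Pr(F_n(\gamma))\to 1$); the random time $N_\eps$ from the SLLN is $\theta$-free, so the lower bound $\mu\cdot\unittheta\ge\|\mu\|/\sqrt2$ valid uniformly on $[\pi/4,3\pi/4]$ forces every minimiser into $[0,N_\eps]$ simultaneously for all such $\theta$; and time-reversal converts the maximiser statement into a minimiser statement for a walk with the same law. Your argument in fact establishes the slightly stronger statement that \emph{all} minimisers (resp.\ maximisers) lie in the required window, which comfortably covers whatever tie-breaking convention is built into $\ubar J_n$ and $\bar J_n$. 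One small remark: it would be worth stating explicitly that the assumption $\mu=\|\mu\|\be_{\pi/2}$ is in force (the paper declares this, without loss of generality, for the whole of Section~\ref{sec:diam-drift}), since your bound $\mu\cdot\unittheta\ge\|\mu\|/\sqrt{2}$ on $[\pi/4,3\pi/4]$ relies on it.
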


In light of Lemma~\ref{lemma:Dthetabound}, the key to estimating~\eqref{eq:d-difference} is provided by the following.

\begin{lemma}
\label{lem:diambound}
Let $\gamma \in (0,1/2)$. Then for any 
$\delta \in (0, \pi/4)$ and any $i\in I_{n,\gamma}$, on $E_{n,i}(\gamma)$,
\[\left|\sup_{|\theta-\pi/2|\leq \delta}R_n(\theta)-\sup_{|\theta-\pi/2|\leq \delta}R_n^{(i)}(\theta)-(Z_i-Z'_i)\cdot \unithalfpi\right|\leq \delta\|Z_i-Z'_i\|.\]
\end{lemma}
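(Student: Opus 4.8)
The plan is to deduce the estimate from the exact pointwise identity in Lemma~\ref{lem:Eprob}(i), paying attention only to the fact that the supremum of a sum is not the sum of the suprema.

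First I would note that, since $\delta < \pi/4$, the arc $\{\theta : |\theta - \pi/2| \le \delta\}$ is contained in $[\pi/4, 3\pi/4]$. Thus, for $i \in I_{n,\gamma}$ and on the event $E_{n,i}(\gamma)$, Lemma~\ref{lem:Eprob}(i) applies to give
\[ R_n(\theta) = R_n^{(i)}(\theta) + (Z_i - Z'_i) \cdot \be_\theta , \qquad \text{for all } \theta \text{ with } |\theta - \pi/2| \le \delta . \]
Writing $W := Z_i - Z'_i$, the next step is to replace the direction $\be_\theta$ by the fixed direction $\unithalfpi$ at a controlled cost: since $\| \be_\theta - \unithalfpi \| = 2 | \sin ( (\theta - \pi/2)/2 )| \le |\theta - \pi/2| \le \delta$, the Cauchy--Schwarz inequality gives $| W \cdot \be_\theta - W \cdot \unithalfpi | \le \delta \| W \|$ uniformly over the arc. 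Combining this with the displayed identity, for every $\theta$ with $|\theta - \pi/2| \le \delta$,
\[ R_n^{(i)}(\theta) + W\cdot\unithalfpi - \delta\|W\| \ \le \ R_n(\theta) \ \le \ R_n^{(i)}(\theta) + W\cdot\unithalfpi + \delta\|W\| . \]

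Finally, I would take the supremum over $\{\theta : |\theta - \pi/2| \le \delta\}$ throughout these inequalities: since $W\cdot\unithalfpi \pm \delta\|W\|$ is a constant independent of $\theta$, it passes through the supremum, yielding
\[ \Bigl| \sup_{|\theta - \pi/2| \le \delta} R_n(\theta) - \sup_{|\theta - \pi/2| \le \delta} R_n^{(i)}(\theta) - (Z_i - Z'_i)\cdot\unithalfpi \Bigr| \le \delta \| Z_i - Z'_i \| , \]
which is precisely the claim. There is no substantive obstacle here beyond correctly invoking Lemma~\ref{lem:Eprob}(i) on the sub-arc $[\pi/2 - \delta, \pi/2 + \delta]$ (using $\delta < \pi/4$); the only genuine content is the elementary two-sided sandwich that converts the pointwise identity into a bound on the difference of suprema, via the observation that a uniform additive perturbation of size at most $\delta\|Z_i - Z'_i\|$ survives taking suprema.
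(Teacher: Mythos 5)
Your proposal is correct and takes essentially the same approach as the paper: both start from the exact pointwise identity $R_n(\theta) - R_n^{(i)}(\theta) = (Z_i - Z_i')\cdot\be_\theta$ from Lemma~\ref{lem:Eprob}(i), control the deviation of $(Z_i - Z_i')\cdot\be_\theta$ from $(Z_i - Z_i')\cdot\unithalfpi$ via the Lipschitz estimate of Lemma~\ref{lemma:RLipschitz}, and then pass to suprema. The only cosmetic difference is that you first replace the angular term by the constant $W\cdot\unithalfpi \pm \delta\|W\|$ so that taking the supremum is trivial, whereas the paper first invokes the general sandwich inequality~\eqref{eqn:supABineq} for $\sup f - \sup g$ and then applies the Lipschitz bound to the resulting $\sup/\inf$ of $(Z_i - Z_i')\cdot\be_\theta$; the two routes are logically interchangeable.
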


Before proving Lemma~\ref{lem:diambound}, we need a simple geometrical lemma.

\begin{lemma}\label{lemma:RLipschitz}
For any $\bx\in \R^2$ and $\theta_1, \theta_2 \in \R$,
\[|\bx\cdot \be_{\theta_1}-\bx\cdot\be_{\theta_2}|\leq \|\bx\||\theta_1-\theta_2|.\]
\end{lemma}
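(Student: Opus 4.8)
The plan is to reduce the claim to a one-variable calculus estimate. First I would write the difference as a telescoping integral: since $\theta \mapsto \be_\theta = (\cos\theta,\sin\theta)$ is differentiable with $\frac{\ud}{\ud\theta}\be_\theta = (-\sin\theta,\cos\theta) =: \be_\theta^\per$, a unit vector, we have for any fixed $\bx \in \R^2$
\[
\bx\cdot\be_{\theta_1} - \bx\cdot\be_{\theta_2} = \int_{\theta_2}^{\theta_1} \bx \cdot \be_t^\per \, \ud t .
\]
Taking absolute values and using $|\bx\cdot\be_t^\per| \leq \|\bx\|\,\|\be_t^\per\| = \|\bx\|$ by Cauchy--Schwarz gives
\[
|\bx\cdot\be_{\theta_1} - \bx\cdot\be_{\theta_2}| \leq \left| \int_{\theta_2}^{\theta_1} \|\bx\| \, \ud t \right| = \|\bx\| \, |\theta_1 - \theta_2|,
\]
which is exactly the assertion. (The $\be_\theta^\per$ notation is only for exposition; in the write-up I would just carry the explicit vector $(-\sin t,\cos t)$.)

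Alternatively, and perhaps cleaner to state without invoking an integral, one can argue purely trigonometrically: $\bx\cdot\be_{\theta_1} - \bx\cdot\be_{\theta_2} = \|\bx\|\big(\cos(\theta_1-\phi) - \cos(\theta_2-\phi)\big)$ where $\phi$ is the polar angle of $\bx$ (the case $\bx=\0$ being trivial), and then the elementary bound $|\cos a - \cos b| \leq |a-b|$ — itself immediate from the mean value theorem or from $|\cos a - \cos b| = 2|\sin\frac{a+b}{2}|\,|\sin\frac{a-b}{2}| \leq 2 \cdot 1 \cdot \frac{|a-b|}{2}$ — finishes the job. I would probably present the first (integral/telescoping) version as the main line since it avoids the polar-angle case split.

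There is essentially no obstacle here: this is a routine Lipschitz estimate for the map $\theta\mapsto\be_\theta$, and the only thing to be careful about is not assuming $\theta_1,\theta_2$ lie in a particular interval (the statement is for all real $\theta_1,\theta_2$), which both arguments above respect. I would keep the proof to two or three lines.
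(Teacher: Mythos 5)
Your primary (integral) argument is correct but takes a genuinely different route from the paper. The paper works directly with the vector difference: using sum-to-product formulas it computes $\|\be_{\theta_1}-\be_{\theta_2}\| = 2\bigl|\sin\bigl(\tfrac{\theta_1-\theta_2}{2}\bigr)\bigr| \leq |\theta_1-\theta_2|$, and then (implicitly) applies Cauchy--Schwarz to $\bx\cdot(\be_{\theta_1}-\be_{\theta_2})$. You instead invoke the fundamental theorem of calculus to write $\bx\cdot\be_{\theta_1}-\bx\cdot\be_{\theta_2}=\int_{\theta_2}^{\theta_1}\bx\cdot\be_t^\per\,\ud t$ and bound the integrand pointwise by $\|\bx\|$; this avoids any trigonometric identity and generalises verbatim to any smooth unit-speed curve in place of $\theta\mapsto\be_\theta$, at the mild cost of introducing calculus where the paper stays algebraic. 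Your ``alternative'' (polar angle of $\bx$ plus $|\cos a - \cos b|\leq|a-b|$) is essentially the paper's proof in different clothing: the sum-to-product identity you cite for $\cos a - \cos b$ is exactly the one the paper applies componentwise to $\be_{\theta_1}-\be_{\theta_2}$, so the two differ only in whether $\|\bx\|$ is factored out before or after the trigonometry. Both of your arguments are sound and correctly note that no restriction on the range of $\theta_1,\theta_2$ is needed.
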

\begin{proof}
We have
\begin{align*}
\be_{\theta_1}-\be_{\theta_2}&=(\cos \theta_1 - \cos \theta_2, \sin \theta_1 - \sin \theta_2 )\\
&=\left(-2\sin\left( \frac{\theta_1 - \theta_2}{2}\right)\sin\left( \frac{\theta_1 + \theta_2}{2}\right),2\sin\left( \frac{\theta_1 - \theta_2}{2}\right)\cos\left( \frac{\theta_1 + \theta_2}{2}\right)\right),
\end{align*}
so that $\|\be_{\theta_1}-\be_{\theta_2}\|^2 = 4 \sin^2 \left( \frac{\theta_1 - \theta_2}{2}\right)$,
and hence $\|\be_{\theta_1}-\be_{\theta_2}\| = 2 \left| \sin \left( \frac{\theta_1 - \theta_2}{2}\right)\right|$.
Now use the inequality $|\sin x|\leq |x|$ (valid for all $x\in \R$) to get
\[\|\be_{\theta_1}-\be_{\theta_2}\|\leq |\theta_1-\theta_2|,\]
and the result follows.
\end{proof}

\begin{proof}[Proof of Lemma~\ref{lem:diambound}.]
We claim that with $i \in I_{n,\gamma}$, for any $\theta_1,\theta_2 \in [\pi/4,3\pi/4]$, on the event $E_{n,i}(\gamma)$,
it holds that
\begin{equation}
\label{eq:two-theta}
\inf_{\theta_1\leq \theta \leq \theta_2}(Z_i-Z'_i)\cdot \unittheta\leq\sup_{\theta_1\leq \theta \leq \theta_2} R_n(\theta) - \sup_{\theta_1\leq \theta \leq \theta_2} R_n^{(i)}(\theta) \leq \sup_{\theta_1\leq \theta \leq \theta_2}(Z_i-Z'_i)\cdot \unittheta
.
\end{equation}
Given the claim~\eqref{eq:two-theta}, and that, as follows from Lemma~\ref{lemma:RLipschitz},
\begin{align*}
 \sup_{|\theta-\pi/2|\leq \delta}(Z_i-Z'_i)\cdot \unittheta & \leq (Z_i-Z'_i)\cdot \unithalfpi + \delta \|Z_i-Z'_i\|, \text{ and} \\
 \inf_{|\theta-\pi/2|\leq \delta}(Z_i-Z'_i)\cdot \unittheta & \geq (Z_i-Z'_i)\cdot \unithalfpi - \delta \|Z_i-Z'_i\|, \end{align*}
the statement in the lemma follows on taking $\theta_1 = \pi/2 - \delta$ and $\theta_2 = \pi/2 + \delta$.

 It remains to establish the claim~\eqref{eq:two-theta}.
First we note that for $f, g : \R \to \R$ with 
$\sup_{ \theta \in I} |f (\theta) | < \infty$ and $\sup_{ \theta \in I} |g (\theta) | < \infty$,
\begin{equation}\label{eqn:supABineq}
\inf_{\theta \in I} (f(\theta)-g(\theta)) \leq 
\sup_{\theta \in I} f(\theta)-\sup_{\theta \in I} g(\theta)\leq \sup_{\theta \in I} (f(\theta)-g(\theta)).
\end{equation}
In particular, taking $I = [\theta_1, \theta_2]$, with $\theta_1,\theta_2 \in [\pi/3,3\pi/4]$, we have
\begin{align*}
\inf_{\theta_1 \leq \theta \leq \theta_2} \left( R_n (\theta) - R_n^{(i)} (\theta) \right) \leq 
 \sup_{\theta_1\leq \theta \leq \theta_2} R_n(\theta) - \sup_{\theta_1\leq \theta \leq \theta_2} R_n^{(i)}(\theta)  
& \leq \sup_{\theta_1 \leq \theta \leq \theta_2} \left( R_n (\theta) - R_n^{(i)} (\theta) \right) ,
\end{align*}
and, on the event $E_{n,i}(\gamma)$, we have from~\eqref{eqn:rangeEquation}   that
\[ 
  R_n (\theta) - R_n^{(i)} (\theta)  
=   (Z_i-Z'_i) \cdot \unittheta, \text{ for all } \theta \in [\theta_1, \theta_2 ] ,\]
which establishes the claim~\eqref{eq:two-theta}.
\end{proof}

To obtain rough estimates when the events $A_{n,i} (\delta)$ and $E_{n,i} (\gamma)$ do not occur,
we need the following bound. 

\begin{lemma}\label{lemma:ACDeltaBound}
For any $i \in \{ 1,2,\ldots,n\}$, a.s.,
\[
|D_n^{(i)}-D_n| \leq 2\|Z_i\|+2\|Z'_i\|.
\]
\end{lemma}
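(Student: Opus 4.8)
The plan is to bound the change in diameter caused by resampling a single increment, using the formula $D_n = \sup_{0 \leq \theta \leq \pi} R_n(\theta)$ from~\eqref{diam-formula} together with the observation that replacing $Z_i$ by $Z_i'$ shifts every later point $S_j$, $j \geq i$, by the fixed vector $Z_i' - Z_i$. First I would fix $\theta \in [0,\pi]$ and compare $M_n(\theta)$ with $M_n^{(i)}(\theta)$: for $j < i$ we have $S_j^{(i)} \cdot \unittheta = S_j \cdot \unittheta$, while for $j \geq i$ we have $S_j^{(i)} \cdot \unittheta = S_j \cdot \unittheta + (Z_i' - Z_i) \cdot \unittheta$, whence $|S_j^{(i)} \cdot \unittheta - S_j \cdot \unittheta| \leq \|Z_i\| + \|Z_i'\|$ for every $j$, by Cauchy--Schwarz. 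Taking maxima over $0 \leq j \leq n$ gives $|M_n^{(i)}(\theta) - M_n(\theta)| \leq \|Z_i\| + \|Z_i'\|$, and likewise $|m_n^{(i)}(\theta) - m_n(\theta)| \leq \|Z_i\| + \|Z_i'\|$.

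Combining these via $R_n(\theta) = M_n(\theta) - m_n(\theta)$ and the triangle inequality yields $|R_n^{(i)}(\theta) - R_n(\theta)| \leq 2\|Z_i\| + 2\|Z_i'\|$ for every $\theta \in [0,\pi]$. Then I would take the supremum over $\theta$: since $|\sup_\theta f(\theta) - \sup_\theta g(\theta)| \leq \sup_\theta |f(\theta) - g(\theta)|$ (this is the two-sided form already recorded in~\eqref{eqn:supABineq}), we conclude
\[
|D_n^{(i)} - D_n| = \left| \sup_{0 \leq \theta \leq \pi} R_n^{(i)}(\theta) - \sup_{0 \leq \theta \leq \pi} R_n(\theta) \right| \leq \sup_{0 \leq \theta \leq \pi} |R_n^{(i)}(\theta) - R_n(\theta)| \leq 2\|Z_i\| + 2\|Z_i'\|,
\]
which is the claimed bound.

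There is no real obstacle here: the argument is entirely elementary and deterministic (holding pointwise on $\Omega$, hence a.s.), requiring only the additive structure of the walk under resampling and the standard stability of suprema under uniform perturbations. The only point to be a little careful about is that the shift vector $Z_i' - Z_i$ affects $S_j$ for all $j \geq i$ but not for $j < i$, so the bound $\|Z_i\| + \|Z_i'\|$ on $|S_j^{(i)} \cdot \unittheta - S_j \cdot \unittheta|$ is uniform in $j$ in either case; everything else is routine.
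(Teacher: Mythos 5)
Your proof is correct, and it is essentially the same strategy as the paper's: bound $\sup_\theta |R_n(\theta) - R_n^{(i)}(\theta)|$ by $2\|Z_i\| + 2\|Z_i'\|$, then pass the sup through using the two-sided inequality~\eqref{eqn:supABineq}. The one difference is that the paper simply cites Lemma~3.1 of~\cite{wx1} for the uniform bound on $|R_n(\theta) - R_n^{(i)}(\theta)|$, whereas you derive it from scratch by observing that resampling $Z_i$ shifts $S_j$ by the fixed vector $Z_i' - Z_i$ for $j \geq i$ and leaves $S_j$ unchanged for $j < i$, so every projection $S_j \cdot \unittheta$ moves by at most $\|Z_i' - Z_i\| \leq \|Z_i\| + \|Z_i'\|$, hence $M_n(\theta)$ and $m_n(\theta)$ each move by at most that amount. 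Your inline derivation is clean, correct, and makes the lemma self-contained; the paper's citation is shorter but imports an external result. Either way the content is the same.
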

\begin{proof}
Lemma~3.1 from~\cite{wx1} states that, for any $i \in \{ 1,2,\ldots,n\}$, a.s.,
\[
\sup_{0 \leq \theta \leq \pi} \left| R_n(\theta)- R_n^{(i)}(\theta) \right| \leq 2\|Z_i\| + 2\|Z'_i\|. 
\]
Now from~\eqref{eq:d-difference} and~\eqref{eqn:supABineq} we obtain the result.
\end{proof}

Now define the event $B_{n,i} ( \gamma, \delta):= E_{n,i}( \gamma) \cap A_{n,i}(\delta)$.
Let $B_{n,i}^\rc (\gamma,\delta)$ denote the complementary event.
The preceding results in this section can now be combined to obtain the following approximation lemma
for $\Delta_{n,i}$ as given by~\eqref{eqn:Dni}.
\begin{lemma}
\label{lem:approx}
Suppose that $\Exp\|Z \|<\infty$ and $\mu \neq \0$.
For any $\gamma \in (0,1/2)$, $\delta \in (0,\pi/4)$, and $i \in I_{n,\gamma}$, we have, a.s.,
\begin{align*}
\left| \Delta_{n,i} - (Z_i-\mu)\cdot \hat{\mu}\right| \leq&~ 3 \|Z_i\| \Pr ( B_{n,i}^\rc(\gamma,\delta) \mid \cF_i) + 3\Exp[\|Z'_i\|\2{ B_{n,i}^\rc(\gamma,\delta) } \mid \cF_i ]\nonumber\\
& + \delta \left(\|Z_i\|+\Exp \|Z \| \right).
\end{align*}
\end{lemma}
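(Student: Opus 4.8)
The plan is to start from the defining identity $\Delta_{n,i}=\Exp(D_n-D_n^{(i)}\mid\cF_i)$ and to compare the random variable $W_{n,i}:=D_n-D_n^{(i)}-(Z_i-Z'_i)\cdot\hat\mu$ with $0$. The point is that, after conditioning on $\cF_i$, one has $\Exp[(Z_i-Z'_i)\cdot\hat\mu\mid\cF_i]=(Z_i-\mu)\cdot\hat\mu$, since $Z_i$ is $\cF_i$-measurable while $Z'_i$ is independent of $\cF_i$ with $\Exp Z'_i=\mu$; recall also that under the normalization $\mu=\|\mu\|\be_{\pi/2}$ we have $\hat\mu=\unithalfpi$. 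I would split
\[
W_{n,i}=W_{n,i}\2{B_{n,i}(\gamma,\delta)}+W_{n,i}\2{B_{n,i}^\rc(\gamma,\delta)},
\]
estimate each piece deterministically, and then take $\Exp(\,\cdot\mid\cF_i)$.

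On the good event $B_{n,i}(\gamma,\delta)=E_{n,i}(\gamma)\cap A_{n,i}(\delta)$, the occurrence of $A_{n,i}(\delta)$ means the maximizers over $[0,\pi]$ of both $R_n(\cdot)$ and $R_n^{(i)}(\cdot)$ lie in $[\pi/2-\delta,\pi/2+\delta]$, so, using $D_n=\sup_{0\leq\theta\leq\pi}R_n(\theta)$ from~\eqref{diam-formula} (and likewise for $D_n^{(i)}$), we may restrict: $D_n=\sup_{|\theta-\pi/2|\leq\delta}R_n(\theta)$ and $D_n^{(i)}=\sup_{|\theta-\pi/2|\leq\delta}R_n^{(i)}(\theta)$. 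Since moreover $i\in I_{n,\gamma}$, $\delta\in(0,\pi/4)$ (so that $[\pi/2-\delta,\pi/2+\delta]\subseteq[\pi/4,3\pi/4]$), and $E_{n,i}(\gamma)$ holds, Lemma~\ref{lem:diambound} applies and gives $|W_{n,i}|\leq\delta\|Z_i-Z'_i\|\leq\delta(\|Z_i\|+\|Z'_i\|)$ on $B_{n,i}(\gamma,\delta)$. On the complementary event $B_{n,i}^\rc(\gamma,\delta)$, Lemma~\ref{lemma:ACDeltaBound} gives $|D_n-D_n^{(i)}|\leq 2\|Z_i\|+2\|Z'_i\|$ a.s., while trivially $|(Z_i-Z'_i)\cdot\hat\mu|\leq\|Z_i\|+\|Z'_i\|$, so $|W_{n,i}|\leq 3\|Z_i\|+3\|Z'_i\|$ there.

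Finally I would apply $\Exp(\,\cdot\mid\cF_i)$, whose left-hand side is $\Delta_{n,i}-(Z_i-\mu)\cdot\hat\mu$. For the good-event term, pulling out the $\cF_i$-measurable factor $\|Z_i\|$ and using $\Exp(\|Z'_i\|\mid\cF_i)=\Exp\|Z\|$,
\[
\bigl|\Exp[W_{n,i}\2{B_{n,i}(\gamma,\delta)}\mid\cF_i]\bigr|\leq \delta\Exp[\|Z_i\|+\|Z'_i\|\mid\cF_i]=\delta\bigl(\|Z_i\|+\Exp\|Z\|\bigr),
\]
and for the bad-event term,
\[
\bigl|\Exp[W_{n,i}\2{B_{n,i}^\rc(\gamma,\delta)}\mid\cF_i]\bigr|\leq 3\|Z_i\|\Pr(B_{n,i}^\rc(\gamma,\delta)\mid\cF_i)+3\Exp[\|Z'_i\|\2{B_{n,i}^\rc(\gamma,\delta)}\mid\cF_i].
\]
Adding these two bounds via the triangle inequality yields the stated inequality. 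I expect the only genuine care-point to be the reduction, on $A_{n,i}(\delta)$, of the global suprema defining $D_n$ and $D_n^{(i)}$ to the window $|\theta-\pi/2|\leq\delta$ so that Lemma~\ref{lem:diambound} is legitimately applicable; everything else is bookkeeping with conditional expectations and the two prior estimates.
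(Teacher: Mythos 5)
Your argument is correct and follows essentially the same route as the paper: both reduce $\Delta_{n,i}-(Z_i-\mu)\cdot\hat\mu$ to $\Exp[D_n-D_n^{(i)}-(Z_i-Z'_i)\cdot\hat\mu\mid\cF_i]$, split on $B_{n,i}(\gamma,\delta)$ versus its complement, control the good event via Lemma~\ref{lem:diambound} after restricting the suprema to $|\theta-\pi/2|\leq\delta$ using $A_{n,i}(\delta)$, control the bad event via Lemma~\ref{lemma:ACDeltaBound} together with $|(Z_i-Z'_i)\cdot\hat\mu|\leq\|Z_i\|+\|Z'_i\|$, and then pull $\cF_i$-measurable factors through the conditional expectation. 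One cosmetic caution: the symbol $W_{n,i}$ is reused later in the paper for $\Delta_{n,i}-V_i$, so a different local name would avoid a clash of notation.
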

\begin{proof}
First observe that, since $Z_i$ is $\cF_i$-measurable and $Z_i'$ is independent of $\cF_i$,
\[   \Delta_{n,i}  - (Z_i - \mu ) \cdot \hat \mu = \Exp [ D_n - D_n^{(i)} - (Z_i - Z_i' ) \cdot \hat \mu \mid \cF_i ] .\] 
Hence, by the triangle inequality,
\begin{align*}
|\Delta_{n,i}-(Z_i-\mu)\cdot \hat{\mu}| 
&\leq\Exp\left[ \left| D_n-D_n^{(i)}-(Z_i-Z'_i)\cdot \hat{\mu}\right| \2 { B_{n,i}(\gamma,\delta) } \mid \cF_i\right]\\
&\qquad {} + {} \Exp \left[ \left| D_n-D_n^{(i)}-(Z_i-Z'_i)\cdot \hat{\mu}\right| \2 { B^\rc_{n,i}(\gamma,\delta) } \mid \cF_i\right] .
\end{align*}
Here, by Lemma~\ref{lemma:ACDeltaBound}, we have that
\begin{align*} 
 \Exp\left[\left| D_n-D_n^{(i)}-(Z_i-Z'_i)\cdot \hat{\mu}\right| \2 { B^\rc_{n,i}(\gamma,\delta) } \mid \cF_i\right] 
& \leq 3
\Exp \left[ ( \| Z_i \| + \| Z_i ' \| ) \2 { B^\rc_{n,i}(\gamma,\delta) } \mid \cF_i\right] 
.\end{align*}
Now, on $A_{n,i} (\delta)$ we have that
\[ D_n = \sup_{| \theta - \pi/2 | \leq \delta} R_n (\theta), \text{ and } D_n^{(i)} =\sup_{| \theta - \pi/2 | \leq \delta} R^{(i)}_n (\theta) ,\]
and hence, by Lemma~\ref{lem:diambound}, on $A_{n,i} (\delta) \cap E_{n,i} (\gamma)$, 
\[ | D_n - D_n^{(i)} - (Z_i - Z_i') \cdot \hat \mu | \leq \delta \| Z_i - Z_i' \| .\]
Hence
\[ 
\Exp\left[ \left| D_n-D_n^{(i)}-(Z_i-Z'_i)\cdot \hat{\mu}\right| \2 { B_{n,i}(\gamma,\delta) } \mid \cF_i\right]
\leq \delta \Exp [   \| Z_i \| + \| Z_i' \|  \mid \cF_i ] .\]
Combining these bounds, and using the
fact that $Z_i$ is $\cF_i$-measurable and $Z_i'$ is independent of $\cF_i$,
we obtain the result.
\end{proof}

We are now almost ready to complete the proof of Theorem~\ref{thm:DnL2}.
To do so, we present an analogue of Lemma~6.1 from~\cite{wx1};
we set $V_i := (Z_i-\mu)\cdot \hat{\mu}$, and $W_{n,i}:= \Delta_{n,i}- V_i$. 
\begin{lemma}\label{lem:ExpWsqconv}
Suppose that $\Exp ( \|Z \|^2 ) <\infty$ and $\mu\neq\0$. Then
\[ \lim_{n\rightarrow \infty} n^{-1} \sum_{i=1}^n \Exp ( W_{n,i}^2 )=0 .\]
\end{lemma}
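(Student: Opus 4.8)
The plan is to split the sum according to whether $i \in I_{n,\gamma}$, disposing of the $O(\gamma n)$ indices outside $I_{n,\gamma}$ with the crude a.s.\ bound of Lemma~\ref{lemma:ACDeltaBound} and treating the indices inside $I_{n,\gamma}$ with the sharp approximation of Lemma~\ref{lem:approx}. Throughout, $\gamma \in (0,1/2)$ and $\delta \in (0,\pi/4)$ would be held fixed, the limits $\gamma \downarrow 0$, $\delta \downarrow 0$ being taken only at the very end; accordingly every constant introduced below must be independent of $\gamma$ and $\delta$, although it may depend on the distribution of $Z$, in particular on $\Exp(\|Z\|^2)$.

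First I would dispose of the indices $i \notin I_{n,\gamma}$. From $\Delta_{n,i} = \Exp(D_n - D_n^{(i)} \mid \cF_i)$ and Lemma~\ref{lemma:ACDeltaBound}, $|\Delta_{n,i}| \leq 2\|Z_i\| + 2\Exp\|Z\|$, while $\Exp(V_i^2) = \Exp[((Z-\mu)\cdot\hat\mu)^2] = \spara$; hence $\Exp(W_{n,i}^2) \leq C_1$ for a finite constant $C_1$ depending only on $\Exp(\|Z\|^2)$. As there are at most $2\gamma n + 2$ such indices, their contribution to $n^{-1}\sum_{i=1}^n \Exp(W_{n,i}^2)$ is at most $C_1(2\gamma + 2/n)$, whose $\limsup$ in $n$ equals $2C_1\gamma$.

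The main work is for $i \in I_{n,\gamma}$. Here Lemma~\ref{lem:approx} gives $|W_{n,i}| \leq a_{n,i} + b_{n,i} + c_{n,i}$ with $a_{n,i} = 3\|Z_i\|\Pr(B_{n,i}^\rc(\gamma,\delta)\mid\cF_i)$, $b_{n,i} = 3\Exp[\|Z'_i\|\1{B_{n,i}^\rc(\gamma,\delta)}\mid\cF_i]$ and $c_{n,i} = \delta(\|Z_i\| + \Exp\|Z\|)$, so $W_{n,i}^2 \leq 3(a_{n,i}^2 + b_{n,i}^2 + c_{n,i}^2)$. The term $c_{n,i}$ is harmless: $\Exp(c_{n,i}^2) \leq \delta^2 C_2$ for some $C_2 = C_2(\Exp(\|Z\|^2))$, contributing at most $3\delta^2 C_2$ to the average. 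For $b_{n,i}$ I would use the independence of $Z'_i$ from $\cF_i$ together with conditional Cauchy--Schwarz to get $\Exp(b_{n,i}^2) \leq 9\,\Exp(\|Z\|^2)\,\Pr(B_{n,i}^\rc(\gamma,\delta))$. For $a_{n,i}$, since $\Pr(\cdot\mid\cF_i)\in[0,1]$ and $\|Z_i\|^2$ is $\cF_i$-measurable, $\Exp(a_{n,i}^2) \leq 9\,\Exp[\|Z_i\|^2\1{B_{n,i}^\rc(\gamma,\delta)}]$, and for any $M>0$ this is at most $9\,\Exp[\|Z\|^2\1{\|Z\|>M}] + 9M^2\Pr(B_{n,i}^\rc(\gamma,\delta))$. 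By Lemmas~\ref{lemma:Dthetabound} and~\ref{lem:Eprob}(ii), for fixed $\gamma,\delta$ one has $\max_{1\leq i\leq n}\Pr(B_{n,i}^\rc(\gamma,\delta)) \to 0$ as $n\to\infty$, so the $\Pr(B_{n,i}^\rc)$-terms die in the limit, leaving, for every $\gamma,\delta,M$,
\[
\limsup_{n\to\infty}\ \frac1n\sum_{i=1}^n \Exp(W_{n,i}^2) \ \leq\ 2C_1\gamma + 3\delta^2 C_2 + 27\,\Exp[\|Z\|^2\1{\|Z\|>M}].
\]
Since the left-hand side does not depend on $\gamma,\delta,M$, letting $\gamma\downarrow0$, $\delta\downarrow0$ and $M\uparrow\infty$ (the final term vanishing because $\Exp(\|Z\|^2)<\infty$) would force the $\limsup$, and hence the limit, to be $0$.

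The step I expect to be the main obstacle is the estimate $\Exp[\|Z_i\|^2\1{B_{n,i}^\rc}] \to 0$ uniformly in $i$: because the \emph{good} event $B_{n,i}$ depends on $Z_i$ itself, the unbounded factor $\|Z_i\|^2$ cannot simply be decoupled from the rare complementary event by independence, and one must instead truncate $\|Z_i\|$ at level $M$ and play the tail $\Exp[\|Z\|^2\1{\|Z\|>M}]$ off against the uniform-in-$i$ decay of $\Pr(B_{n,i}^\rc)$ provided by Lemmas~\ref{lemma:Dthetabound} and~\ref{lem:Eprob}. The remaining estimates are routine bookkeeping once Lemma~\ref{lem:approx} is in hand.
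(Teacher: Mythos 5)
Your proposal is correct and follows essentially the same approach as the paper's own proof: split the sum by whether $i \in I_{n,\gamma}$, use the crude a.s.\ bound from Lemma~\ref{lemma:ACDeltaBound} on the $O(\gamma n)$ exceptional indices, apply Lemma~\ref{lem:approx} inside $I_{n,\gamma}$, truncate $\|Z_i\|$ at a level $M$ to decouple it from the rare bad event $B_{n,i}^\rc(\gamma,\delta)$, and then invoke the uniform-in-$i$ decay of $\Pr(B_{n,i}^\rc(\gamma,\delta))$ from Lemmas~\ref{lemma:Dthetabound} and~\ref{lem:Eprob}. The only differences are cosmetic bookkeeping: you split $W_{n,i}^2$ via $(a+b+c)^2 \leq 3(a^2+b^2+c^2)$ and treat the $Z_i'$-term by conditional Cauchy--Schwarz, whereas the paper bounds $|W_{n,i}|$ first and squares, absorbing the $Z_i'$-contribution into a tail estimate $\Exp[\|Z\|\1{\|Z\|\geq C_1}]$ and then a second truncation at $C_2$.
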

\begin{proof}
The proof is similar to that of Lemma~6.1 of~\cite{wx1}.
Fix $\eps \in (0,1)$. Take $\gamma \in (0,1/2)$ and $\delta \in (0,\pi/4)$, to be specified later. 
Note that from Lemma~\ref{lemma:ACDeltaBound} we have
$| W_{n,i} | \leq 3(\|Z_i\|+\Exp\|Z \|)$,
so that,  provided $\Exp( \|Z\|^2 ) < \infty$, we have $\Exp ( W_{n,i}^2 ) \leq C_0$ for all $n$ and all $i$, for some constant $C_0<\infty$, depending only on the distribution of $Z$.
Hence
\[\frac{1}{n}\sum_{i\not\in I_{n,\gamma}}\Exp ( W_{n,i}^2 ) \leq 2\gamma C_0 .\]
  From now on choose and fix $\gamma >0$ small enough so that $2\gamma C_0< \eps$.
	
Now consider $i\in I_{n,\gamma}$. For such $i$,
Lemma~\ref{lem:approx} yields
an upper bound for $|W_{n,i}|$.
Note that, for any $C_1 < \infty$, since $Z_i'$ is independent of $\cF_i$,
\[ \Exp[\|Z'_i\|\2{B_{n,i}^\rc(\gamma,\delta)} \mid \cF_i]
\leq \Exp [ \|Z \|\1 { \|Z\| \geq C_1 } ]
 + C_1 \Pr[ B_{n,i}^\rc(\gamma,\delta ) \mid \cF_i].\]
Given $\eps \in (0,1)$ we can take $C_1 = C_1 (\eps)$ large enough such that 
$\Exp [ \|Z \|\1 { \|Z\| \geq C_1 } ] \leq \eps$,  by dominated convergence;
for convenience we take $C_1 > 1$ and $C_1 > \Exp \| Z \|$.
 Hence from Lemma~\ref{lem:approx}  we obtain
\begin{align*}
|W_{n,i}|&\leq 3 (\|Z_i\|+C_1)\Pr[B_{n,i}^\rc(\gamma, \delta) \mid \cF_i] + 3 \eps  + \delta \left(\|Z_i\|+\Exp \|Z\| \right) .
\end{align*}
Using the fact that $\Pr[B_{n,i}^\rc(\gamma, \delta) \mid \cF_i] \leq 1$, $\eps \leq 1$, $\delta \leq 1$, and $C_1 > 1$, $C_1 > \Exp \| Z \|$,
we can square both sides of the last display and collect terms to obtain
\begin{align*}
W_{n,i}^2 \leq 27 C_1^2 ( 1 + \| Z_i \|  )^2 \Pr[B_{n,i}^\rc(\gamma, \delta) \mid \cF_i]  + 9 \eps + 13 C_1^2 \delta \left(1 + \|Z_i\|\right)^2 .
\end{align*}
Since $\Exp ( \| Z \|^2 ) < \infty$, it follows that, given $\eps$ and hence $C_1$,
we can choose $\delta \in (0,\pi/4)$ sufficiently small so that $13 C_1^2 \delta \Exp [ \left(1 + \|Z_i\|\right)^2 ] < \eps$;
fix such a $\delta$ from now on.
Then
\[ \Exp ( W_{n,i}^2  ) \leq 27 C_1^2 \Exp [ ( 1 + \| Z_i \|  )^2 \Pr[B_{n,i}^\rc(\gamma, \delta) \mid \cF_i] ] + 10 \eps .\]
 Here we have that, for any $C_2 > 0$,
\[ \Exp [ ( 1 + \| Z_i \|  )^2 \Pr[B_{n,i}^\rc(\gamma, \delta) \mid \cF_i] ] \leq (1 + C_2 )^2 \Pr ( B_{n,i}^\rc(\gamma, \delta) )
+ \Exp [ ( 1 + \| Z  \|  )^2 \1 { \| Z \| \geq C_2 } ] ,\]
where dominated convergence shows that we may choose $C_2$ large enough so that the last term is less than $\eps / C_1^2$, say. Then,
\[ \Exp ( W_{n,i}^2  ) \leq 37 \eps + 27 C_1^2 (1 + C_2)^2  \Pr ( B_{n,i}^\rc(\gamma, \delta) ) .\]
Finally, we see from Lemmas~\ref{lemma:Dthetabound} and~\ref{lem:Eprob} than
$\max_{1 \leq i \leq n} \Pr ( B_{n,i}^\rc(\gamma, \delta) ) \to 0$,
so that, for given $\eps >0$ (and hence $C_1$ and $C_2$) we may choose $n \geq n_0$ sufficiently large so that
$\max_{i \in I_{n,\gamma}} \Exp ( W_{n,i}^2 ) \leq 38 \eps$.
Hence
\[ \frac{1}{n} \sum_{i\in I_{n,\gamma}} \Exp ( W_{n,i}^2 ) \leq 38 \eps,\]
for all $n\geq n_0$. Combining this result with the estimate for $i\not\in I_{n,\gamma}$, we see that
\[\frac{1}{n}\sum_{i=1}^n \Exp( W_{n,i}^2 ) \leq  39 \eps ,\]
for all $n\geq n_0$. Since $\eps>0$ was arbitrary, the result follows.
\end{proof}

\begin{proof}[Proof of Theorem \ref{thm:DnL2}]
First note that $W_{n,i}$ is $\cF_i$-measurable with
$\Exp ( W_{n,i} \mid \cF_{i-1} ) = \Exp ( \Delta_{n,i} \mid \cF_{i-1} ) - \Exp V_i = 0$,
so that $W_{n,i}$ is a martingale difference sequence.  Therefore by orthogonality, 
$n^{-1} \Exp[ (\sum_{i=1}^n W_{n,i})^2 ] = n^{-1}\sum_{i=1}^n \Exp ( W_{n,i}^2 ) \rightarrow 0$ as $n\rightarrow \infty$, by Lemma~\ref{lem:ExpWsqconv}. In other words, 
$n^{-1/2}\sum_{i=1}^n W_{n,i} \rightarrow 0$ in $L^2$. But, by Lemma~\ref{lemma:VarGn},
\[ \sum_{i=1}^n W_{n,i} = \sum_{i=1}^n \Delta_{n,i} - \sum_{i=1}^n ( Z_i - \mu) \cdot \hat \mu = D_n - \Exp D_n - ( S_n - \Exp S_n ) \cdot \hat \mu .\]
This yields the statement in the theorem.
\end{proof}

 Finally we can give the proof of Theorem~\ref{thm2}.
\begin{proof}[Proof of Theorem~\ref{thm2}.]
Lemma~\ref{lem:diam-expec} shows that
\begin{equation}
\label{eq4} n^{-1/2} | \Exp D_n - \Exp S_n \cdot \hat \mu | \to 0 .\end{equation}
Then by the triangle inequality
\[ n^{-1/2} | D_n -  S_n \cdot \hat \mu | \leq n^{-1/2} | D_n - \Exp D_n - (S_n - \Exp S_n ) \cdot \hat \mu |
+  n^{-1/2} | \Exp D_n - \Exp S_n \cdot \hat \mu | ,\]
which tends to $0$ in~$L^2$ by~\eqref{eqn:DnL2conv} and~\eqref{eq4}.
\end{proof}

\begin{proof}[Proof of Corollary~\ref{cor:diam-clt}]
Corollary~\ref{cor:diam-clt} is deduced from Theorem~\ref{thm2} in a very similar
manner to how Theorems~1.1 and~1.2 in \cite{wx1}
were deduced from Theorem~1.3 there, so we omit the details.
\end{proof}

\section{Diameter in the degenerate case}
\label{sec:diam-degen}

The aim of this section is to prove  Theorem~\ref{thm3};
thus we assume $\mu \neq \0$.
First we state a result that
 will enable us to obtain the second statement in Theorem~\ref{thm3} from the first.

\begin{lemma}\label{DnUnifIntLemma}
	Suppose that $\Exp ( \|Z\|^p ) < \infty$ for some $p>4$, $\mu \neq \0$, and $\spara =0$. Then $(D_n-\|\mu\|n)^2$ is uniformly integrable.
\end{lemma}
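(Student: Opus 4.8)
The plan is to deduce the uniform integrability of $(D_n-\|\mu\|n)^2$ from a uniform bound on a higher moment: I will show that $\sup_{n\geq 1}\Exp\bigl[(D_n-\|\mu\|n)^q\bigr]<\infty$ for some $q>2$, which suffices because a family of random variables whose moments of some order strictly greater than $1$ are uniformly bounded is uniformly integrable (apply this to the nonnegative variables $(D_n-\|\mu\|n)^2$, which then have a bounded moment of order $q/2>1$). The hypothesis $\spara=0$ means $Z\cdot\hat\mu=\|\mu\|$ a.s., so I first record the decomposition $S_j=(\|\mu\|j)\hat\mu+Y_j\hat\mu_\per$, where $Y_j:=S_j\cdot\hat\mu_\per$ is a one-dimensional mean-zero random walk whose increments satisfy $|Y_1-Y_0|=|Z\cdot\hat\mu_\per|\leq\|Z\|$, hence have finite $p$-th moment. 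In particular $D_n\geq\|S_n\|\geq|S_n\cdot\hat\mu|=\|\mu\|n$ for every $n$, so that $D_n-\|\mu\|n\geq 0$ and $D_n+\|\mu\|n\geq 2\|\mu\|n$.

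The key step is a deterministic estimate exploiting the degenerate geometry. Since $\|S_i-S_j\|^2=\|\mu\|^2(i-j)^2+(Y_i-Y_j)^2$ and $(i-j)^2\leq n^2$,
\[ D_n^2=\max_{0\leq i,j\leq n}\|S_i-S_j\|^2\leq\|\mu\|^2 n^2+\Bigl(\max_{0\leq k\leq n}Y_k-\min_{0\leq k\leq n}Y_k\Bigr)^2\leq\|\mu\|^2 n^2+4\max_{0\leq k\leq n}Y_k^2, \]
using $Y_0=0$ in the last inequality. Writing $D_n-\|\mu\|n=(D_n^2-\|\mu\|^2n^2)/(D_n+\|\mu\|n)$ and using $D_n+\|\mu\|n\geq 2\|\mu\|n$ then gives, for $n\geq 1$,
\[ 0\leq D_n-\|\mu\|n\leq\frac{2}{\|\mu\|n}\max_{0\leq k\leq n}Y_k^2. \]
This is where the degenerate structure $S_j\cdot\hat\mu=\|\mu\|j$ is essential: it is what makes $D_n-\|\mu\|n$ of the smaller order $n^{-1}\max_k Y_k^2$ rather than of order $\max_k|Y_k|$.

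The rest is a routine moment computation. Since $p>4$ we may fix $q\in(2,p/2]$, so that $2q\leq p$ and hence $\Exp[|Z\cdot\hat\mu_\per|^{2q}]\leq\Exp[\|Z\|^{2q}]<\infty$. Raising the last display to the power $q$ and taking expectations,
\[ \Exp\bigl[(D_n-\|\mu\|n)^q\bigr]\leq\frac{2^q}{\|\mu\|^q n^q}\,\Exp\Bigl[\max_{0\leq k\leq n}|Y_k|^{2q}\Bigr]. \]
By Doob's $L^{2q}$ maximal inequality applied to the martingale $(Y_k)$, $\Exp[\max_{k\leq n}|Y_k|^{2q}]\leq C_q\,\Exp[|Y_n|^{2q}]$; and by Rosenthal's inequality (or Marcinkiewicz--Zygmund) for sums of i.i.d.\ centred random variables with finite $2q$-th moment, $\Exp[|Y_n|^{2q}]\leq C_q'\,n^q$ for all $n$ (the bound $n\leq n^q$, valid since $2q\geq 2$, is used here). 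Combining, $\sup_{n\geq 1}\Exp[(D_n-\|\mu\|n)^q]\leq 2^q C_q C_q'\|\mu\|^{-q}<\infty$, and since $q>2$ this yields the uniform integrability of $(D_n-\|\mu\|n)^2$.

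The one point requiring care is the deterministic estimate in the second paragraph: one must keep the exact identity $S_j\cdot\hat\mu=\|\mu\|j$ rather than only a law of large numbers, and one should retain the maximum of $Y_k^2$ over all $k$ (not merely $Y_n^2$), because the diameter can be attained between interior vertices when $Y$ makes a large excursion. After that the role of the hypothesis $p>4$ is simply to ensure that $D_n-\|\mu\|n$ has a moment of some order $q>2$ — slightly more than the second moment the conclusion nominally concerns — and I do not anticipate any further obstacle.
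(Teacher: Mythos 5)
Your proof is correct and follows essentially the same approach as the paper: both derive the deterministic bound $0\leq D_n-\|\mu\|n\leq\frac{2}{\|\mu\|n}\max_{0\leq k\leq n}Y_k^2$ (you via the difference-of-squares factorisation, the paper via $(1+y)^{1/2}\leq 1+y/2$; these are equivalent), and then both combine Doob's maximal inequality with Marcinkiewicz--Zygmund to bound a moment of order greater than $1$ of $(D_n-\|\mu\|n)^2$ uniformly in $n$. The only cosmetic difference is that you work with a generic exponent $q\in(2,p/2]$ whereas the paper fixes the exponent $p/4$ for the squared quantity, i.e.\ $q=p/2$.
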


As in Section~\ref{sec:perim-drift}, we write
 $X_n := S_n \cdot \hat \mu$ and $Y_n := S_n \cdot \hat \mu_\per$, where $\hat \mu_\per$
is any fixed unit vector orthogonal to $\mu$. Note that if $\spara = 0$, then $X_n = n \| \mu \|$ is deterministic.

\begin{proof}[Proof of Lemma~\ref{DnUnifIntLemma}.]
For $i \leq j$, we have $\| S_j - S_i \|^2 = (Y_j - Y_i)^2 + (X_j- X_i)^2$, so that
	\begin{align*}
	(D_n - \|\mu\|n)^2 &= \left( \max_{0\leq i \leq j \leq n} \left( (Y_j - Y_i)^2 + \|\mu\|^2 (j-i)^2\right)^{1/2} -\|\mu\|n \right) ^2\\
	&\leq \left( \|\mu\|n \max_{0\leq i \leq j \leq n} \left( 1 + \frac{(Y_j-Y_i)^2}{\|\mu\|^2 n^2} \right) ^{1/2} - \|\mu\|n\right)^2.
	\end{align*}
Since
	$(1 + y)^{1/2} \leq 1 + (y/2)$ for $y \geq 0$, and $(a-b)^2 \leq 2 (a^2 + b^2)$ for $a, b \in \R$, we obtain
	\begin{align*} 
(D_n - \|\mu\|n)^2  
	\leq \left( \|\mu\|n \max_{0\leq i \leq j \leq n}   \frac{(Y_j-Y_i)^2}{2 \|\mu\|^2 n^2}    \right)^2  \leq \frac{4}{\| \mu \|^{2}} \max_{1\leq i\leq n} \frac{Y_i^4}{n^2} .
	\end{align*}
Now, $|Y_n|$ is a non-negative submartingale, so Doob's~$L^p$ inequality~\cite[p.~505]{gut} yields
\begin{align*}
\Exp \left[ \left( \max_{1\leq i\leq n} \frac{Y_i^4}{n^2} \right)^{p/4} \right]
= n^{-p/2} \Exp \left( \max_{1\leq i\leq n} |Y_i|^p \right) \leq C_p n^{-p/2} \Exp ( |Y_n|^p ), 
\end{align*}
for any $p>1$ and some constant $C_p < \infty$. Assuming that $\Exp ( \| Z \|^p ) < \infty$
for $p >4$,    $Y_n$ is a random walk on $\R$ whose increments have zero mean
and finite $p$th moments, so, by the Marcinkiewicz--Zygmund inequality~\cite[p.~151]{gut},
$\Exp (|Y_n|^p ) \leq C n^{p/2}$. Hence
\[ \sup_{n \geq 0} \Exp \left[ \left( (D_n - \|\mu\|n)^2 \right)^{p/4} \right] < \infty ,\]
which, since $p/4 > 1$, establishes uniform integrability.
\end{proof}
 
Next we show that, under the conditions of  Theorem~\ref{thm3},
the diameter must be attained by a point `close to' the start and one `close to' the end of the walk. 

\begin{lemma}\label{lem:Dattained}
	Suppose that $\Exp ( \|Z\|^2 ) < \infty$, $\mu \neq \0$, and $\spara =0$. Let $\beta \in (0,1)$.
	Then, a.s.,
	for all but finitely many $n$,
	\[ D_n = \max_{\substack{0\leq i \leq n^{\beta} \\ n-n^\beta \leq j \leq n}}\|S_j-S_i\| .\]
	\end{lemma}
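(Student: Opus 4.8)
The plan is to show that any pair of points achieving the diameter must be separated in their time-indices by nearly all of $[0,n]$, so that both the earlier and the later index lie within $n^\beta$ of the respective endpoints. Throughout, write $X_k = S_k \cdot \hat\mu = \|\mu\| k$ (deterministic, since $\spara = 0$) and $Y_k = S_k \cdot \hat\mu_\per$, so that $\|S_j - S_i\|^2 = \|\mu\|^2 (j-i)^2 + (Y_j - Y_i)^2$.

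First I would record the fluctuation bound for $Y$: since $Y$ is a mean-zero random walk with $\Exp(Y_1^2) = \sperp < \infty$, the law of the iterated logarithm (or a cruder Borel--Cantelli argument using $\Exp\|Z\|^2 < \infty$) gives that, a.s., $\max_{0 \le k \le n} |Y_k| = o(n^{1/2 + \eta})$ for any $\eta > 0$; in particular $\max_{0 \le k \le n}|Y_k| \le n^{2/3}$, say, for all but finitely many $n$. Hence $|Y_j - Y_i| \le 2 n^{2/3}$ for all $0 \le i \le j \le n$ eventually. Second, I would compare the diameter with the obvious lower bound $D_n \ge \|S_n\| = \|S_n - S_0\| \ge \|\mu\| n$. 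Now suppose $(i,j)$ with $i \le j$ achieves $D_n$; then
\[
\|\mu\|^2 (j-i)^2 + (Y_j - Y_i)^2 = D_n^2 \ge \|\mu\|^2 n^2,
\]
so $(j-i)^2 \ge n^2 - \|\mu\|^{-2}(Y_j-Y_i)^2 \ge n^2 - 4\|\mu\|^{-2} n^{4/3}$ eventually, giving $n - (j-i) = O(n^{1/3})$, hence $i + (n-j) = O(n^{1/3})$. This already forces both $i = O(n^{1/3})$ and $n - j = O(n^{1/3})$, which is comfortably inside $n^\beta$ for any $\beta \in (0,1)$ once $n$ is large, and these estimates hold simultaneously for every maximizing pair since they follow purely from the deterministic inequality $D_n \ge \|\mu\| n$ together with the a.s. bound on $\max_k |Y_k|$.

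The main (minor) obstacle is handling the direction of the inequality cleanly: one wants to conclude not merely that \emph{some} maximizing pair has small $i$ and large $j$, but that the maximum over the restricted index set equals $D_n$. This is immediate because any maximizing pair already lies in the restricted set (for $n$ large, on the full-measure event above), so the restricted maximum is at least $D_n$, while it is trivially at most $D_n$. I would therefore state the LIL-type bound as a preliminary observation (or invoke it from a cited source), then run the two-line deterministic estimate above, and finally note that taking $n$ large enough that $O(n^{1/3}) \le n^\beta$ completes the proof; since the whole argument is on a single a.s. event (the event that the fluctuation bound holds for all large $n$), the "all but finitely many $n$" conclusion follows.
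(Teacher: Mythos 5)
Your overall approach is sound and is essentially the same in substance as the paper's: both rest on the fluctuation bound $\max_{0\le k\le n} |Y_k|^2 \le n^{1+\eps}$ a.s.\ eventually (a consequence of the LIL) together with the decomposition $\|S_j-S_i\|^2 = \|\mu\|^2(j-i)^2 + (Y_j-Y_i)^2$. The paper splits the maximum defining $D_n$ into three index regions and shows two of them cannot attain the value $\|\mu\|n \le D_n$; you instead directly characterise the location of any maximising pair. These are two phrasings of the same argument.

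However, there is a real slip in the last step. You fix the fluctuation bound at $\max_{0\le k\le n}|Y_k| \le n^{2/3}$, deduce $i$ and $n-j$ are $O(n^{1/3})$, and then assert this is ``comfortably inside $n^\beta$ for any $\beta\in(0,1)$.'' That is false for $\beta < 1/3$: if $\beta$ is small, $n^\beta$ is eventually smaller than $C n^{1/3}$, and the argument breaks. The fix is easy but must be made explicit: the LIL gives $\max_{0\le k\le n}|Y_k|^2 \le n^{1+\eps}$ for \emph{every} $\eps>0$, and you must choose $\eps$ (equivalently $\eta$) \emph{after} $\beta$ is given. Concretely, take $\eps < \beta$, so $(Y_j-Y_i)^2 \le 4n^{1+\eps}$ eventually, and then for any maximising pair
\[
\|\mu\|^2\bigl(n-(j-i)\bigr)\bigl(n+(j-i)\bigr) \;\le\; (Y_j-Y_i)^2 \;\le\; 4n^{1+\eps},
\]
which, together with $n+(j-i)\ge n$, gives $i + (n-j) = n - (j-i) \le 4\|\mu\|^{-2} n^{\eps}$, and $n^\eps \le n^\beta$ eventually since $\eps<\beta$. (Note also that you should argue from the product form rather than expanding the square root; the estimate $(1-x)^{1/2}\ge 1-x$ hides a constant you would otherwise need to track.) With this correction your proof is complete and valid.
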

	\begin{proof}
Fix $\beta\in (0,1)$. Since $D_n =  \max_{0\leq i,j \leq n}\|S_j-S_i\|$, we have
	\begin{align}
	D_n = \max \left\{ \max_{\substack{0\leq i \leq n^{\beta} \\ n-n^\beta \leq j \leq n}}\|S_j-S_i\|,
	\max_{\substack{0\leq i \leq n^{\beta} \\ 0 \leq j \leq n-n^\beta}}\|S_j-S_i\|,
	\max_{n^{\beta}\leq i,j \leq n}\|S_j-S_i\|\right\}.
	\label{eqn1}
	\end{align}
	It is clear that 
	\[\max_{\substack{0\leq i \leq n^{\beta} \\ n-n^\beta \leq j \leq n}}\|S_j-S_i\| \geq \|S_n\| \geq | X_n | = \|\mu\|n .\]
	We aim to show that the other two terms on the right-hand side of~\eqref{eqn1} are strictly less than 
	$\|\mu\|n$ for all but finitely many $n$. 
	
	A consequence of the law of the iterated logarithm is that, for any $\eps >0$, a.s., for all but finitely many $n$,
	$\max_{0\leq i \leq n} Y_i^2 \leq n^{1+\eps}$; see e.g.~\cite[p.~384]{gut}.
Take $\eps \in (0,\beta)$. Then, 
	\begin{align*}
	\max_{\substack{0\leq i \leq n^{\beta} \\ 0 \leq j \leq n-n^\beta}}\|S_j-S_i\|^2 & \leq \max_{\substack{0\leq i \leq n^{\beta} \\ 0 \leq j \leq n-n^\beta}}|X_j-X_i|^2 +\max_{\substack{0\leq i \leq n^{\beta} \\ 0 \leq j \leq n-n^\beta}}|Y_j - Y_i|^2  \\
	&\leq  \|\mu\|^2 (n-n^\beta)^2 +\max_{0 \leq j \leq n-n^\beta } Y_j^2 +\max_{0\leq i \leq n^\beta} Y_i^2 + 2\max_{\substack{0\leq i \leq n^{\beta} \\ 0 \leq j \leq n-n^\beta}}|Y_j| |Y_i|  \\
	&\leq \|\mu\|^2 n^2 - 2\|\mu\|^2 n^{1+\beta} + \|\mu\|^2 n^{2\beta}+ n^{1+\eps} ,\end{align*}
	for all but finitely many $n$. Since $\eps < \beta < 1$, this last expression
	is strictly less than $\|\mu\|^2 n^2$ for all $n$ sufficiently large.
 Similarly,
	\begin{align*}
	\max_{n^\beta \leq i,j \leq n}\|S_j-S_i\|^2 & \leq 
	\|\mu\|^2 (n-n^\beta)^2 +\max_{n^\beta \leq j \leq n } Y_j^2 +\max_{n^\beta \leq i \leq n} Y_i^2 + 2\max_{n^\beta \leq i,j \leq n}|Y_j| |Y_i|  \\
	&\leq  \|\mu\|^2 n^2 - 2\|\mu\|^2 n^{1+\beta} + \|\mu\|^2 n^{2\beta}+ n^{1+\eps} ,\end{align*}
	for all but finitely many $n$, and, as before, this is strictly less than $\|\mu\|^2 n^2$ for all $n$ sufficiently large.
	Then~\eqref{eqn1} yields the result.
\end{proof}

The main remaining step in  the proof of Theorem~\ref{thm3} is the following result.

\begin{lemma}\label{lem:Dconverge}
	Suppose that $\Exp ( \|Z\|^p )  < \infty$ for some $p>2$, $\mu \neq \0$, and $\spara =0$.  
	Then, as $n \to \infty$, $D_n - \| S_n \| \to 0$, a.s.
	\end{lemma}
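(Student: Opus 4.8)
The plan is to compare $D_n$ directly with $\|S_n\|$, using Lemma~\ref{lem:Dattained} to confine the indices that can realise the diameter to short windows at the two ends of the walk. I would begin by reducing to the case $\mu = \|\mu\|\unithalfpi$, so that $X_n := S_n \cdot \hat\mu = \|\mu\| n$ is deterministic (because $\spara = 0$) while $Y_n := S_n \cdot \hat \mu_\per$ is a mean-zero random walk whose increments $\xi_k := Z_k \cdot \hat \mu_\per$ have finite $p$th moment, $\Exp |\xi_k|^p \le \Exp(\|Z\|^p) < \infty$; if $\sperp = 0$ then $Y_n \equiv 0$ and $D_n = \|S_n\| = \|\mu\| n$ identically, so I may assume $\sperp > 0$. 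I then fix an exponent $\beta \in (0, 1 - 2/p)$ --- possible since $p > 2$ --- and apply Lemma~\ref{lem:Dattained}: a.s., for all but finitely many $n$, the diameter is attained by a pair $S_{i^*}, S_{j^*}$ with $0 \le i^* \le n^\beta$ and $n - n^\beta \le j^* \le n$, and, since $X_m = \|\mu\| m$ and $\hat\mu \perp \hat \mu_\per$,
\[ D_n^2 \;=\; \|\mu\|^2 (j^* - i^*)^2 + (Y_{j^*} - Y_{i^*})^2 . \]
Taking $i^* = 0$, $j^* = n$ gives the trivial lower bound $D_n \ge \|S_n\|$, so only an upper bound on $D_n - \|S_n\|$ is needed.

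For the upper bound I would set $a := i^*$ and $b := n - j^*$ (both in $[0, n^\beta]$) and split off the endpoint contributions to $Y$: writing $U := Y_{i^*}$ and $W := Y_n - Y_{j^*}$ we have $Y_{j^*} - Y_{i^*} = Y_n - U - W$, and a short expansion of $(n - a - b)^2$ and of $\|S_n\|^2 = \|\mu\|^2 n^2 + Y_n^2$ yields
\[ D_n^2 - \|S_n\|^2 \;=\; \|\mu\|^2\bigl[(a+b)^2 - 2n(a+b)\bigr] - 2 Y_n (U + W) + (U + W)^2 . \]
Since $0 \le a + b \le 2 n^\beta \le 2n$, the bracketed term is non-positive, so, with $H_n := \max_{0 \le m \le n^\beta} |Y_m| \ge |U|$ and $G_n := \max_{0 \le m \le n^\beta} \bigl| \sum_{k = n - m + 1}^{n} \xi_k \bigr| \ge |W|$,
\[ 0 \;\le\; D_n^2 - \|S_n\|^2 \;\le\; 2 |Y_n|\,(G_n + H_n) + (G_n + H_n)^2 . \]
The task thereby reduces to showing that the right-hand side is $o(n)$ a.s.

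To control $G_n$ and $H_n$ uniformly over $n$ I would use a union bound: Doob's maximal inequality combined with the Marcinkiewicz--Zygmund inequality (both available since $p > 2$) gives $\Pr(H_n \ge n^{\beta/2 + \delta}) \le C n^{\beta p / 2} n^{-(\beta/2 + \delta) p} = C n^{-\delta p}$, which is summable in $n$ as soon as $\delta > 1/p$, and the same bound holds for $G_n$ after reversing the order of the increments. By the Borel--Cantelli lemma, a.s.\ $G_n + H_n = O(n^{\beta/2 + 1/p + \eta})$ for every $\eta > 0$, while the law of the iterated logarithm gives $|Y_n| = O(\sqrt{n \log\log n})$ a.s. Because $\beta < 1 - 2/p$, i.e.\ $\tfrac12 + \tfrac\beta2 + \tfrac1p < 1$, choosing $\eta$ small enough makes both $|Y_n|(G_n + H_n)$ and $(G_n + H_n)^2$ of order $o(n)$, so $0 \le D_n^2 - \|S_n\|^2 = o(n)$ a.s.; dividing by $D_n + \|S_n\| \ge 2\|S_n\| \ge 2\|\mu\| n$ then gives $D_n - \|S_n\| \to 0$ a.s.

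The main obstacle is that the obvious estimate --- bounding $\|\mu\|^2 (j-i)^2$ and $(Y_j - Y_i)^2$ separately --- is too lossy, since $\max_{j} Y_j^2$ is of order $n \log\log n$, not $o(n)$ relative to $\|S_n\|^2$. What makes the comparison tight is the decomposition $Y_{j^*} - Y_{i^*} = Y_n - U - W$ into the ``bulk'' value $Y_n$ plus two sums over the short end-windows, and the delicate part is controlling the end-window maxima $G_n, H_n$ uniformly over all $n$: this is exactly where the hypothesis $p > 2$ enters, via the summable tail bound above, and it forces the window exponent to satisfy $\beta < 1 - 2/p$.
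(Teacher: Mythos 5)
Your proposal is correct and shares the paper's overall strategy: invoke Lemma~\ref{lem:Dattained} to localise the attaining indices $(i^*,j^*)$ to the two end-windows $[0,n^\beta]$ and $[n-n^\beta,n]$, then expand $D_n^2 - \|S_n\|^2$ so that the dangerous $Y_n^2$ and $\|\mu\|^2 n^2$ terms cancel exactly, leaving only contributions from the end-window increments. Your algebraic identity $D_n^2 - \|S_n\|^2 = \|\mu\|^2[(a+b)^2 - 2n(a+b)] - 2Y_n(U+W) + (U+W)^2$ is exact and arguably cleaner than the chain of inequalities in the paper.

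Where you differ is in the key technical estimate on $G_n = \max_{0\le m\le n^\beta}|Y_n - Y_{n-m}|$. The paper proves a bespoke result (Lemma~\ref{LILStyleLem}) via a truncation of the increments at level $i^{1/2-\delta}$ followed by the Azuma--Hoeffding inequality and Borel--Cantelli, concluding $G_n = o(n^{1/2-\eps})$ for $\beta < \min\{1/4,(p-2)/(4p)\}$ and small $\eps$. You instead apply Doob's $L^p$ maximal inequality and the Marcinkiewicz--Zygmund inequality to get $\Exp[H_n^p] = O(n^{\beta p/2})$, then Markov and Borel--Cantelli to obtain $G_n + H_n = O(n^{\beta/2 + 1/p + \eta})$ for any $\beta < 1 - 2/p$. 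Both arguments use exactly the $p>2$ moment hypothesis in an essential way (as the paper's Remark~\ref{p2condition} explains it must), but yours is shorter, avoids the truncation step and the ancillary lemma entirely, yields a sharper exponent, and permits a wider range of $\beta$; it is a genuinely more economical route to the same conclusion. One minor point worth recording explicitly in a final write-up: $G_n$ and $H_n$ are equidistributed for each fixed $n$ (the increments in the terminal window are i.i.d.\ with the same law), which is all that the Borel--Cantelli step needs.
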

	\begin{proof}
Using the fact that $\| S_n \|^2 = \| \mu \|^2 n^2 + Y_n^2$, 	we have that,  for $j \leq n$,
	\begin{align*}
 \|S_j-S_i\|^2  & = \| \mu \|^2 (j - i)^2 + (Y_j - Y_i )^2 \\
& = \| S_n \|^2 + \|\mu\|^2i^2 +\|\mu\|^2j^2 -2\|\mu\|^2ij -\|\mu\|^2n^2  
  + Y_i^2 +Y_j^2 - 2Y_i Y_j -Y_n^2 \\
	&\leq   \|S_n\|^2+ \|\mu\|^2i^2   -(Y_n-Y_j)(Y_n+Y_j)+2Y_i(Y_n-Y_j)-2Y_iY_n + Y_i^2 . \end{align*}
Here we have that, for any $\eps >0$,
 $\max_{0 \leq i \leq n^\beta} | Y_i Y_n | \leq n^{\frac{1+\beta}{2} + \eps}$
and $\max_{0 \leq i \leq n^\beta} Y_i^2 \leq n^{\beta + \eps}$ for all but finitely many $n$.
For the terms involving $Y_j$,  Lemma~\ref{LILStyleLem} shows that we may choose $\beta \in (0,1/2)$ such that,
for any sufficiently small $\eps>0$,
\[ \max_{n-n^\beta \leq j \leq n} |Y_n-Y_j| \leq n^{\frac{1}{2}-\eps} , \text{ and } \max_{n-n^\beta \leq j \leq n} |Y_n-Y_j| |Y_n+Y_j| \leq n^{1-\eps} , \]
for all but finitely many $n$. With this choice of $\beta$ and sufficiently small $\eps$, we combine these bounds to obtain
\[ \max_{\substack{0\leq i \leq n^{\beta} \\ n-n^\beta \leq j \leq n}} \|S_j-S_i\|^2
\leq \| S_n \|^2 + \| \mu \|^2 n^{2\beta} +n^{1-\eps} + n^{\frac{1+\beta}{2} + \eps} + n^{\beta + \eps} , \]
	for all but finitely many $n$. Since $\beta \in (0,1/2)$, we may apply Lemma~\ref{lem:Dattained} and choose $\eps >0$
	sufficiently small to see that
	$D_n^2 \leq \| S_n \|^2 + n^{1-\eps}$,
	for all but finitely many $n$. Hence
	\begin{align*}
	D_n  \leq 
	\|S_n\| \left(1 + \|S_n\|^{-2} n^{1-\eps} \right)^{1/2}  \leq \|S_n\| \left(1+ \|\mu\|^{-2} n^{-1-\eps} \right)^{1/2} ,\end{align*}
	since $\|S_n\|\geq n\|\mu\|$. Using the fact that $(1+ x)^{1/2} \leq 1 + (x/2)$ for $x \geq 0$, we get
	\[ D_n \leq \|S_n\| \left(1+  \tfrac{1}{2} \|\mu\|^{-2} n^{-1-\eps }\right) \leq \|S_n\| +   \|\mu\|^{-1} n^{-\eps} ,\]
	for all but finitely many $n$, since, by the 
	strong law of large numbers, $\|S_n\|\leq 2\|\mu\|n$ all but finitely often. 
	Combined with the bound $D_n \geq \| S_n \|$, this completes the proof.
	\end{proof}

\begin{proof}[Proof of Theorem~\ref{thm3}.]
Combining Lemmas~\ref{lem:Dconverge} and~\ref{lem2} with Slutsky's theorem~\cite[p.~249]{gut}
and the fact that, in this case,  $X_n = \| \mu \| n$, we obtain~\eqref{eqn4}.
	
	From Lemma~\ref{DnUnifIntLemma} we have that, if
  $\Exp( \|Z\|^p ) < \infty$ for $p>4$, both $D_n - \| \mu \| n$ and
	$(D_n-\|\mu\|n)^2$  are uniformly integrable. Thus from~\eqref{eqn4} we obtain
	\begin{align*} \lim_{n \to \infty} \Exp ( D_n-\|\mu\|n ) & = \Exp\left[\frac{\sperp \zeta^2}{2\|\mu\|} \right] = \frac{\sperp}{2\|\mu\|}, \text{ and }\\
	\lim_{n \to \infty} \Exp [(D_n-\|\mu\|n)^2] & = \Exp\left[ \frac{\sigma_{\mu_\per}^4\zeta^4}{4\|\mu\|^2}\right]=\frac{3\sigma_{\mu_\per}^4}{4\|\mu\|^2}.\end{align*}
	Using the fact that 
	\begin{align*}
	\Var D_n  = \Var (D_n -\|\mu\|n) = \Exp[(D_n - \|\mu\|n)^2]-\Exp[D_n - \|\mu\|n]^2 ,\end{align*}
	we obtain~\eqref{eqn5} on letting $n \to \infty$.
\end{proof}

\appendix

\section{Auxiliary results}
\label{sec:appendix}

In this appendix we present two technical results on sums of i.i.d.~random variables
that are needed in the body of the paper. The first is used in the proof
of Lemma~\ref{lem2}.

\begin{lemma}
\label{lem:negative-part}
Let $\xi, \xi_1, \xi_2, \ldots$ be i.i.d.~random variables with  $\Exp ( \xi^2 ) < \infty$ and
$\Exp \xi >0$. Let $X_n = \sum_{k=1}^n \xi_k$. Then $\lim_{n \to \infty} \Exp   X_n^-   =0$.
\end{lemma}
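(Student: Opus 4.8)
The plan is to reduce the statement to the uniform integrability of the rescaled squares of a mean‑zero random walk, which is then supplied by the central limit theorem together with the fact from \cite[Lemma~4.11]{kall} already used in the proof of Lemma~\ref{lem:ui}. Write $m := \Exp \xi \in (0,\infty)$ and $\sigma^2 := \Var \xi \in [0,\infty)$, and introduce the auxiliary random walk $T_n := \sum_{k=1}^n (m - \xi_k) = nm - X_n$, whose increments are i.i.d.\ with mean $0$ and variance $\sigma^2$, so that $\Exp [ T_n^2 / n ] = \sigma^2$ for every $n$. Since $X_n^- = (-X_n)^+ = (T_n - nm)^+$, the first step is the elementary pointwise bound
\[ X_n^- = (T_n - nm)^+ \leq |T_n| \, \1{ |T_n| > nm/2 } , \]
valid because on $\{ T_n \leq nm/2 \}$ one has $T_n - nm \leq -nm/2 < 0$, hence $(T_n - nm)^+ = 0$, while on $\{ T_n > nm/2 \}$ one has $(T_n - nm)^+ \leq T_n = |T_n|$ and the indicator equals $1$. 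On the event in the indicator, $|T_n| \leq 2 T_n^2 / (nm)$, so taking expectations gives
\[ \Exp X_n^- \leq \frac{2}{m} \, \Exp \Bigl[ \tfrac{T_n^2}{n} \, \1{ \tfrac{T_n^2}{n} > \tfrac{nm^2}{4} } \Bigr] . \]

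The second step is to apply uniform integrability. By the central limit theorem $T_n^2 / n \tod \sigma^2 \zeta^2$ for $\zeta \sim \cN(0,1)$, and $\Exp [ T_n^2/n ] = \sigma^2 = \Exp [ \sigma^2 \zeta^2 ] < \infty$; since $T_n^2/n \geq 0$, the cited fact shows that $\{ T_n^2 / n : n \geq 1 \}$ is uniformly integrable (the case $\sigma^2 = 0$ being trivial, as then $X_n^- = 0$ a.s.). Because $nm^2/4 \to \infty$, uniform integrability forces $\Exp [ (T_n^2/n) \1{ T_n^2/n > nm^2/4 } ] \to 0$, and the displayed bound then yields $\Exp X_n^- \to 0$, as required.

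The only thing to watch for is that the naive route — writing $\Exp X_n^- = \int_0^\infty \Pr(X_n < -t)\,\ud t$ and bounding $\Pr(X_n < -t) \leq \Pr(|X_n - nm| \geq t + nm) \leq n\sigma^2/(t+nm)^2$ by Chebyshev — only produces boundedness ($\Exp X_n^- \leq \sigma^2/m$), not decay, since the supremum over $n$ of this bound, achieved near $n \asymp t/m$, is of order $1/t$; a second moment hypothesis simply does not give enough tail decay at that critical scale. The device of passing first to $|T_n| \1{|T_n| > nm/2}$ and then to $T_n^2/n$ is precisely what buys the extra factor $|T_n|/(nm)$ on the bad event, converting the (insufficient) second‑moment estimate into the tail of a uniformly integrable sequence. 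Both the pointwise inequality $X_n^- \leq |T_n| \1{|T_n| > nm/2}$ and the identity $\Exp[T_n^2/n] = \sigma^2$ are routine, so no genuine obstacle remains once the reduction is in place.
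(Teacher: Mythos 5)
Your argument is correct, and it takes a genuinely different route from the paper's. You reduce the claim to the uniform integrability of $T_n^2/n$ for the centred walk $T_n := nm - X_n$, obtained from the central limit theorem together with Kallenberg's convergence-plus-mean criterion (the same tool the paper already uses in the proof of Lemma~\ref{lem:ui}); the pointwise bound $X_n^- \leq |T_n|\1{|T_n|>nm/2} \leq \frac{2}{m}\cdot\frac{T_n^2}{n}\1{\frac{T_n^2}{n}>\frac{nm^2}{4}}$ then gives decay because the truncation level $nm^2/4 \to \infty$. The paper instead splits $\Exp X_n^- = \int_0^{\eps n} + \int_{\eps n}^\infty \Pr(X_n^->r)\,\ud r$, bounds the first piece by Chebyshev (which alone only yields $O(\eps)$), and handles the tail by truncating the increments at a fixed level $B$ and applying a fourth-moment Marcinkiewicz--Zygmund bound to the bounded part and Chebyshev to the small remainder. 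Your version is shorter and leans on a fact already cited in the paper, at the cost of invoking the CLT; the paper's version is more elementary (no weak convergence, just moment inequalities) but longer. Your observation about why a straight Chebyshev estimate over the full range gives only boundedness $\Exp X_n^- \leq \sigma^2/m$ rather than decay — the critical scale $t \asymp nm$ is where Chebyshev saturates — is accurate and is precisely why both proofs must do something extra at the tail.
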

\begin{proof}
Let $\Exp \xi = m >0$ and $\Var \xi = s^2 < \infty$.
Fix $\eps >0$.
Note that
\[ \Exp  X_n^- = \int_0^\infty \Pr ( X_n^- > r ) \ud r
= \int_0^{\eps n} \Pr ( X_n^- > r ) \ud r + \int_{\eps n}^\infty \Pr ( X_n^- > r ) \ud r
.\]
Here we have that, by Chebyshev's inequality,
\[ \Pr (X_n^- > r ) \leq \Pr ( | X_n - m n | > m n + r ) \leq \frac{\Var X_n}{(m n + r)^2}
= \frac{s^2 n}{(m n + r)^2} .\]
It follows that
\begin{equation}
\label{eq20}
\int_0^{\eps n} \Pr ( X_n^- > r ) \ud r \leq s^2 n \int_0^{\eps n} \frac{\ud r}{(m n + r)^2}
\leq \frac{s^2 \eps}{m^2}  .\end{equation}

For $B \in (0,\infty)$
 let $\xi'_k := \xi_k \1 { | \xi_k | \leq B }$
and $\xi''_k := \xi_k \1 { | \xi_k | > B }$.
Set $X_n' := \sum_{k=1}^n \xi'_k$ and $X_n'' := \sum_{k=1}^n \xi''_k$. 
By dominated convergence, we have that as $B \to \infty$,
$\Exp \xi'_1 \to m$, $\Var \xi'_1 \to s^2$, $\Exp | \xi''_1 | \to 0$, and $\Var \xi''_1 \to 0$,
so in particular we may (and do) choose $B$ large enough so that
$\Exp \xi'_1 > m /2$, $\Exp | \xi''_1 | < \eps/4$,
and $\Var \xi''_1 < \eps^2$.

Since $X_n = X_n' + X_n''$, for any $r >0$ we have
\begin{equation}
\label{eq21}
 \Pr ( X_n < - r ) \leq \Pr ( X_n' < -r/2) + \Pr (X_n'' < - r/2 ) .\end{equation}
Here since $\Exp( (\xi'_k)^4 ) \leq B^4 < \infty$ it follows from Markov's inequality and 
the Marcinkiewicz--Zygmund inequality~\cite[p.~151]{gut} that for some constant $C < \infty$
(depending on $B$),
\[ \Pr ( X_n ' < - r ) \leq \Pr ( | X_n' - \Exp X_n'|^4 > ( \Exp X_n' + r)^4 ) \leq
\frac{C n^2}{((m/2) n + r )^4 } .\]
So
\begin{equation}
\label{eq22}
 \int_{\eps n}^\infty  \Pr ( X_n ' < - r/2 ) \ud r \leq 16 C n^2 \int_0^\infty \frac{\ud r}{(m n+r )^4} = O (1/n) .
\end{equation}
On the other hand, by Chebyshev's inequality, for $r > (\eps/4) n$,
\[ \Pr ( X_n'' < - r ) \leq \Pr ( | X_n'' - \Exp X_n'' | > \Exp X_n'' + r ) \leq \frac{\Var X_n''}{(r - (\eps/4) n)^2} \leq \frac{\eps^2 n}{(r - (\eps/4) n)^2} .\]
Hence
\begin{equation}
\label{eq23}
 \int_{\eps n}^\infty  \Pr ( X_n'' < - r/2 ) \leq 4 \eps^2 n \int_{\eps n}^\infty \frac{ \ud r}{(r - (\eps/2) n )^2} = 8 \eps .\end{equation}
So from~\eqref{eq21} with~\eqref{eq22} and~\eqref{eq23}, we have
\[ \limsup_{n \to \infty} \int_{\eps n}^\infty \Pr ( X_n < - r ) \ud r \leq 8 \eps, \]
which combined with~\eqref{eq20} implies that
\[ \limsup_{n \to \infty} \Exp  X_n^- \leq \frac{s^2 \eps}{m^2} + 8 \eps .\]
Since $\eps>0$ was arbitrary, the result follows.
\end{proof}

The next result is used in the proof of Lemma~\ref{lem:Dconverge}.

\begin{lemma}\label{LILStyleLem}
Let $\xi, \xi_1, \xi_2, \ldots$ be i.i.d.~random variables with  $\Exp ( |\xi|^p ) < \infty$ for some $p>2$, and
$\Exp \xi =0$.
	For $0 \leq j \leq n$, let $T_{n,j}:=\sum_{i=n-j}^{n} \xi_i$.
	Then there exist $\beta_0 \in (0,1/2)$ and $\eps_0 \in (0,1/2)$ such that for any $\beta \in (0,\beta_0)$ and any $\eps  \in (0,\eps_0)$,
	\[   \lim_{n\rightarrow \infty}\max_{0\leq j \leq n^\beta} \frac{|T_{n,j}|}{n^{(1/2) - \eps}}=0 , \as \]
\end{lemma}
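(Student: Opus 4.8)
The plan is to reduce the statement to a maximal inequality combined with a Borel--Cantelli argument along a dyadic subsequence. First I would observe that $T_{n,j} = S_n - S_{n-j-1}$ in the notation of partial sums, so that, for fixed $n$, the family $\{T_{n,j} : 0 \le j \le n^\beta\}$ is (after re-indexing) an initial segment of a random walk of length $\lfloor n^\beta \rfloor + 1$ with i.i.d.\ mean-zero increments having finite $p$th moment. Hence $\max_{0 \le j \le n^\beta} |T_{n,j}|$ has the same distribution as $\max_{0 \le m \le \lfloor n^\beta\rfloor} |S'_m|$ for an independent copy $S'$ of the walk. The key tool is then Doob's $L^p$ maximal inequality together with the Marcinkiewicz--Zygmund inequality: since $|S'_m|$ is a non-negative submartingale,
\[ \Exp\Bigl[ \max_{0 \le m \le \lfloor n^\beta\rfloor} |S'_m|^p \Bigr] \le C_p\, \Exp\bigl( |S'_{\lfloor n^\beta\rfloor}|^p \bigr) \le C\, n^{\beta p /2}, \]
for constants depending only on $p$ and the law of $\xi$; this mirrors exactly the estimate already used in the proof of Lemma~\ref{DnUnifIntLemma}.

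Next, by Markov's inequality, for any $\eta > 0$,
\[ \Pr\Bigl( \max_{0 \le j \le n^\beta} |T_{n,j}| > \eta\, n^{(1/2) - \eps} \Bigr) \le \frac{C\, n^{\beta p/2}}{\eta^p\, n^{((1/2) - \eps) p}} = \frac{C}{\eta^p}\, n^{-((1/2) - \eps - (\beta/2)) p}. \]
The exponent $((1/2) - \eps - \beta/2) p$ is strictly positive once $\eps$ and $\beta$ are small enough, but a single power of $n$ is not summable unless this exponent exceeds $1$; so I would choose $\beta_0, \eps_0$ small enough that $((1/2) - \eps_0 - \beta_0/2) p > 1$ (possible precisely because $p > 2$, which gives room below the critical value $1/2$ for the exponent inside), and then for $\beta < \beta_0$, $\eps < \eps_0$ the probabilities are summable in $n$. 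Borel--Cantelli then gives that a.s.\ $\max_{0 \le j \le n^\beta} |T_{n,j}| \le \eta\, n^{(1/2) - \eps}$ for all but finitely many $n$; since $\eta > 0$ is arbitrary, this yields the claimed a.s.\ convergence to $0$.

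The main obstacle — really the only point requiring care — is the bookkeeping on exponents: one must verify that $p > 2$ actually leaves enough slack to make the exponent in the tail bound strictly larger than $1$ after subtracting the contributions of $\eps$ and $\beta/2$. This is why the lemma only asserts the existence of \emph{some} $\beta_0, \eps_0$ rather than allowing all $\beta, \eps < 1/2$: as $p \downarrow 2$ the admissible window shrinks to zero. A secondary, routine point is justifying the reduction to an independent copy of the walk and the submartingale property of $|S'_m|$ (which follows from Jensen's inequality applied to the martingale $S'_m$), and making sure the Marcinkiewicz--Zygmund constant does not depend on the length $\lfloor n^\beta \rfloor$; both are standard. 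No genuinely new idea beyond those already deployed in Lemmas~\ref{DnUnifIntLemma} and~\ref{lem:Dattained} is needed.
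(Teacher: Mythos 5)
Your proof is correct, and it takes a genuinely different route from the paper's. The paper truncates at level $i^{1/2-\delta}$, splitting $\xi_i=\xi_i'+\xi_i''$, and then treats the three pieces separately: the centred truncated sum via the Azuma--Hoeffding inequality (giving exponential tail bounds), the accumulated means via a tail estimate on $\Exp|\xi''_i|$, and the large-jump part via Borel--Cantelli to show $\xi''_i=0$ eventually. Your argument instead uses the distributional identity $\max_{0\leq j\leq n^\beta}|T_{n,j}|\overset{d}{=}\max_{1\leq m\leq\lfloor n^\beta\rfloor+1}|S'_m|$ and then goes directly through Doob's $L^p$ maximal inequality plus Marcinkiewicz--Zygmund to get $\Exp[\max^p]\leq Cn^{\beta p/2}$, followed by Markov and Borel--Cantelli; the summability condition $(\tfrac12-\eps-\tfrac{\beta}{2})p>1$ has room exactly because $p>2$, and one may take, e.g., $\eps_0=\tfrac12(\tfrac12-\tfrac1p)$, $\beta_0=\min\{\tfrac12-\tfrac1p,\tfrac12\}$, trimming $\beta_0$ below $1/2$ if needed. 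Your route is shorter and reuses machinery already present in the paper (the same Doob + Marcinkiewicz--Zygmund combination appears in the proof of Lemma~\ref{DnUnifIntLemma}); the paper's truncation/Azuma approach buys exponential tails, which are stronger than what Borel--Cantelli actually needs here. Both correctly locate the role of $p>2$, in line with Remark~\ref{p2condition}. One small cosmetic point: you announce a ``dyadic subsequence'' argument at the outset but never use one --- direct summability over all $n$ suffices, as your own bound shows --- so that phrase should simply be dropped.
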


\begin{remark}\label{p2condition}
On first sight,  by the fact that there are $O(n^\beta)$ terms in the sum $T_{n, j}$,
one's intuition may be misled to conclude that $T_{n,j}$ should be only of size about $n^{\beta/2}$.
However, note that assuming only $\Exp ( \xi^2 ) < \infty$,
 $\max_{0 \leq i \leq n} \xi_i$ can be essentially as big as $n^{1/2}$,
and with probability at least $1/n$ this maximal value is a member of $T_{n,j}$,
and so it seems  reasonable to expect that $T_{n,j}$ should be as big as $n^{1/2}$ infinitely often.
Thus our $p>2$ moments condition seems to be necessary.
\end{remark}
\begin{proof}
	Let $\xi'_i = \xi_i \1{|\xi_i|\leq i^{1/2-\delta}}$ and $\xi''_i = \xi_i \1{|\xi_i| > i^{1/2-\delta}}$ for some $\delta \in (0,1/2)$ to be chosen later.
	Then we use the subadditivity of the supremum, the triangle inequality, and the condition $\eps \in (0,\eps_0)$ to get
	\begin{equation} 
	\max_{0\leq j \leq n^\beta} \frac{|T_{n,j}|}{n^{1/2-\eps}}\leq
 \max_{0\leq j \leq n^\beta} \frac{|\sum_{i=n-j}^{n}(\xi'_i-\Exp\xi'_i)|}{n^{1/2-\eps}}+ 
 \frac{\sum_{i=n- n^\beta}^{n}|\Exp \xi'_i|}{n^{1/2-\eps_0}} + \frac{\sum_{i=n-  n^\beta}^{n}|\xi''_i|}{n^{1/2-\eps_0}},
	\label{eqn:1}
	\end{equation}
	where, and for the rest of this proof, if $n^\beta$ appears in the index of a sum, we understand it to be shorthand for $\lfloor n^\beta \rfloor$.
	By Markov's inequality, since $\Exp ( |\xi|^p )<\infty$ for $p>2$ we have
	\[ \Pr \left( |\xi_i|>i^{1/2-\delta}\right)\leq \frac{\Exp ( |\xi|^p )}{i^{(1/2-\delta)p}} = O(i^{\delta p-p/2}) .\]
	Suppose that  $\delta \in (0 , (p-2)/2p )$, so that $\delta p-p/2<-1$, and thus the Borel--Cantelli lemma implies that
	$\xi''_i=0$ for all but finitely many $i$. Thus, for any $\beta, \eps_0 \in (0,1/2)$,
	\[  \lim_{n\rightarrow \infty} \frac{\sum_{i=n-n^\beta}^{n}|\xi''_i|}{n^{1/2-\eps_0}}=0 , \as \]
		For the second term on the right-hand side of~\eqref{eqn:1},  $\Exp \xi =0$ implies $\left|\Exp \xi'_i\right| = \left|\Exp \xi''_i\right|$, so 
	\begin{equation*}
	\sum_{i=n-n^\beta}^{n} \left|\Exp \xi'_i\right| = 
	\sum_{i=n-n^\beta}^{n} \left|\Exp \xi''_i\right| 
	\leq ( n^\beta + 1) \Exp \left(|\xi|\1{|\xi|> (n/2)^{1/2-\delta}}\right),
	\end{equation*}
	for all $n$ large enough so that $n - n^\beta > n/2$.
	Here
	\[ \Exp \left(|\xi|\1{|\xi|> (n/2)^{1/2-\delta}}\right) = \Exp \left(|\xi|^{2} | \xi |^{-1} \1{|\xi|> (n/2)^{1/2-\delta}}\right)
	\leq C n^{\delta - 1/2} ,\]
	for some constant $C$ depending only on $\Exp ( \xi^2)$.
	Suppose that $\delta \leq 1/4$. Then
	we get $\sum_{i=n-n^\beta}^{n} \left|\Exp \xi'_i\right| = O ( n^{\beta -1/4} )$,
	so that, for any $\beta \in (0,1/2)$ and $\eps_0 \in (0,1/4)$,
	\[\lim_{n \rightarrow \infty} \frac{\sum_{i=n-n^\beta}^{n}|\Exp\xi'_i|}{n^{1/2-\eps_0}} = 0, \as \]
	Finally, we consider the first term on the right-hand side of~\eqref{eqn:1},
	with the truncated, centralised sum, which we denote as $T'_{n,j} := \sum_{i=n-j}^n (\xi'_i-\Exp\xi'_i)$. 
	The $\xi'_i - \Exp \xi_i'$ are independent, zero-mean random variables
	with $| \xi'_i - \Exp \xi_i' | \leq 2 n^{1/2-\delta}$ for $i \leq n$, so we may apply the Azuma--Hoeffding inequality~\cite[p.~33]{penrose}
 to obtain, for any $t \geq 0$,
	\[ \Pr\left(|T'_{n,j}|\geq t \right)\leq 2 \exp\left( - \frac{t^2}{8 (j+1) n^{1-2\delta}}\right).\]
	In particular, taking $t = n^{1/2 -\eps_0}$ we obtain
	\begin{align} 
	\label{eq:ah}
	\Pr \left( \max_{0\leq j \leq n^{\beta}}|T'_{n,j}|\geq n^{1/2-\eps_0}\right) & 
	\leq (n^\beta +1)\max_{0\leq j \leq n^\beta} \Pr\left(|T'_{n,j}|\geq n^{1/2-\eps_0} \right) \nonumber\\
	&\leq 2 (n^\beta +1) \exp\left( - \frac{n^{1-2\eps_0}}{16 n^{1+\beta-2\delta}}\right),  
	\end{align}
	for all $n$ sufficiently large. Now choose and fix $\delta = \delta (p) := \min \{ 1/4,  (p-2)/4p \}$,
	so $\delta >0$ satisfies the bounds earlier in this proof, and then choose
	$\beta < \beta_0 := \delta$ such that 
	\[ \frac{n^{1-2\eps_0}}{n^{1+\beta-2\delta}} = n^{2\delta - 2 \eps_0 - \beta} \geq n^{\delta - 2\eps_0} .\]
	So choosing $\eps_0 = \delta /4$ we have that 
	the probability bound in~\eqref{eq:ah} is summable. Thus by the Borel--Cantelli lemma, we have that
	$\max_{0\leq j \leq n^\beta} |T'_{n,j}| \leq n^{1/2-\eps_0}$ for all but finitely many $n$, a.s.
		It follows that, for any $\eps \in (0,\eps_0)$,
			\[\lim_{n\rightarrow \infty} \frac{|\sum_{i=n-n^\beta}^{n}(\xi'_i-\Exp\xi'_i)|}{n^{1/2-\eps}}=0, \as,\]
	which completes the proof.
\end{proof}

\section*{Acknowledgements}

The authors are grateful to Ostap Hryniv,
for numerous helpful discussions on the subject of this paper, and to
Wilfrid Kendall for a question that prompted us to formulate Theorem~\ref{thm:shape}.
The first author is supported by an EPSRC
studentship (EP/M507854/1).

\end{document}